
\documentclass[11 pt]{amsart}
\usepackage{graphicx}
\usepackage[hidelinks]{hyperref}
\usepackage{color}
\setlength{\topmargin}{0in}
\usepackage{amssymb}
\usepackage{epstopdf}
\usepackage{nicefrac}
\usepackage{amsthm}
\usepackage{amsthm}
\usepackage{tikz}
\usepackage{tikz,fullpage}
\usetikzlibrary{patterns}
\numberwithin{equation}{section}
\usepackage[margin=1.27in]{geometry}
\usetikzlibrary{arrows,%
                petri,%
                topaths}%
\usepackage{tkz-berge}
\usepackage[position=top]{subfig}
\usetikzlibrary{%
  matrix,%
  calc,%
  arrows%
}
\DeclareGraphicsRule{.tif}{png}{.png}{`convert #1 `dirname #1`/`basename #1 .tif`.png}

\parskip = 5.5pt
\parindent = 0.0in

\hoffset=-.00001in
\voffset=-.6in
\setlength{\textwidth}{6in}
\setlength{\textheight}{9in}

\newtheorem{thm}{THEOREM}[section]
\newtheorem{conjecture}[thm]{CONJECTURE}

\newtheorem{lemma}[thm]{LEMMA}


\newcommand{\G}{\Gamma}














\begin{document}
\title{Hochschild and cyclic homology of the crossed product of algebraic irrational rotational algebra by finite subgraoups of $SL(2,\mathbb Z)$.}

\author{Safdar Quddus}
\address{Department of Mathematics, Islamic University, Al-Madinah, KSA}
\email{safdar.quddus@go.wustl.edu}

\date{\today}
 
\let\thefootnote\relax\footnote{2010 Mathematics Subject Classification. 58B34; 19D55 }
\keywords{homology, non-commutative torus, crossed product algebra}

\maketitle

\begin{abstract}

Let $\G \subset SL(2, \mathbb Z)$ be a finite subgroup acting on the irrational rotational algebra $\mathcal A_\theta$ via the restriction of the canonical action of $SL(2,\mathbb Z)$. Consider the crossed product algebra $\mathcal A_\theta^{alg} \rtimes \G$ obtained by the restriction of the $\G$ action on the algberaic irrational rotational algebra. In this paper we prove many results on the homology group of the crossed product algebra $\mathcal A_\theta^{alg} \rtimes \G$. We also analyse the case of the smooth crossed product algebra, $\mathcal A_\theta \rtimes \G$ and calculate some of its homology groups.
\end{abstract}

\bigskip

\section{Introduction and Statement} \label{sec-bck}
For $\theta \in \mathbb R$ let $\mathcal A_\theta$ be the rotation algebra, which is the universal C*-algebra generated by unitaries $U_1$ and $U_2$ satisfying $U_2 U_1 = \lambda U_1 U_2$, where $\lambda =e^{2 \pi i \theta}$. The structures of non-commutative torus has been revealed in a classical paper of Connes [C], the Hochschild and cyclic cohomology are calculated therein. Paper also calculates the Chern-Connes pairing for the smooth non-commutative torus. [ELPH] consider the action of finite subgroups, $\G \subset SL(2,\mathbb Z)$ on the noncommutative torus algebra and they calculated the $K_0$ group of the corresponding crossed product algebra, $K_0(\mathcal A_\theta \rtimes \G)$. One can consider the subalgebra consisting of finitely supported elements, $\mathcal A_\theta^{alg}$. The Hochschild homology of the crossed product algbera, $\mathcal A_\theta^{alg} \rtimes \mathbb Z_2$ was calculated in [O] by Oblomkov and in [B], Baudry described the groups $HH_0(\mathcal A_\theta^{alg} \rtimes \G)$ for all finite subgroups $\G$ of $SL(2,\mathbb Z)$.\par
In the paper [BRT], authors studied the Picard group of the algebraic non-commutative torus algebra, $\mathcal A_\theta^{alg}$. In this paper we extend the results of [O] and [B] to give a complete description of the homology of $\mathcal A_\theta^{alg} \rtimes \G$ for all finite subgroups $\G$ of $SL(2,\mathbb Z)$. We also calculate the periodic cyclic homology of $\mathcal A_\theta^{alg} \rtimes \G$. We use a completely new method to prove our results than in [O] or [B]. We also state in this paper some homology groups of the orbifold $\mathcal A_\theta \rtimes \G$. We end this article with a conjecture on the cyclic homology groups of $\mathcal A_\theta \rtimes \G$.\par
\section{$SL(2, \mathbb Z) $ action on $\mathcal A_\theta$}
The C*-dynamical system $(\mathcal A_\theta, \G, \rho)$ can be obtained for a finite subgroup $\G\subset SL(2,\mathbb Z)$ acting on $\mathcal A_\theta$ in the following way. An element 
\begin{center}
$g= \left[
 \begin{array}{cc}
   g_{1,1} & g_{1,2} \\
   g_{2,1} & g_{2,2}
 \end{array} \right]\in SL(2,\mathbb Z)$
\end{center} acts on the generators $U_1$ and $U_2$ as described below
\begin{center}
$\rho_gU_1=e^{(\pi i g_{1,1} g_{2,1})\theta}U_1^{g_{1,1}}U_2^{g_{2,1}} \text{ and }\rho_gU_2=e^{(\pi i g_{1,2} g_{2,2})\theta}U_1^{g_{1,2}}U_2^{g_{2,2}}$\\.
\end{center}
We leave it to the readers to check that the restriction of the formula described above to the finite subgroups, $\mathbb Z_2, \mathbb Z_3, \mathbb Z_4, \text{and } \mathbb Z_6$ of $SL(2,\mathbb Z)$ is indeed an automorphism of the algebra $\mathcal A_\theta$. We illustrate this action by considering the action of $\mathbb Z_4$ on the noncommutative torus algebra, $\mathcal A_\theta^{alg}$. We notice that the generator of $\mathbb Z_4$ in $SL(2,\mathbb Z)$, $g= \left[
 \begin{array}{cc}
   0 & -1 \\
   1 & 0
 \end{array}
\right]$ acts on $\mathcal A_\theta^{alg}$ as follows
\begin{center}
$ \left[
 \begin{array}{cc}
   0 & -1 \\
   1 & 0
 \end{array}
\right] U_1 = U_2^{-1}$ and  $\left[
 \begin{array}{cc}
   0 & -1 \\
   1 & 0
 \end{array}
\right] U_2 = U_1$.
\end{center}
While $\mathcal A_\theta \rtimes \G$ is defined to the crossed product algebra associated to the C*-dynamical system $(\mathcal A_\theta, \G, \rho)$; $\mathcal A_\theta^{alg} \rtimes \G$ is defined to the associated crossed product algebra for the system $(\mathcal A_\theta^{alg},\G,\rho)$ \emph{without} its completion.

The main results of this paper are the following theorems.
\begin{thm} If $\theta \notin \mathbb Q$, the Hochschild homology groups are as follows:
\begin{center}
$H_0(\mathcal A_{\theta}^{alg} \rtimes \G,\mathcal A_{\theta}^{alg} \rtimes \G ) \cong\begin{cases}
\mathbb C^5 & \text{ for } \G = \mathbb Z_2\\
\mathbb C^7 & \text{ for } \G = \mathbb Z_3\\
\mathbb C^8  & \text{ for } \G = \mathbb Z_4 \\
\mathbb C^9 & \text{ for } \G = \mathbb Z_6. \end{cases}$\\
$HH_1(\mathcal A_{\theta}^{alg} \rtimes \G) \cong 0 \text{ for all finite subgroups } \G \subset SL(2, \mathbb Z)$. \\
$HH_2(\mathcal A_{\theta}^{alg} \rtimes \G)  \cong \mathbb C \text{ for all finite subgroups } \G  \subset SL(2, \mathbb Z)$. \\
$HH_k(\mathcal A_{\theta}^{alg} \rtimes \G)  \cong 0 \text{ for all } k>3 \text{ and all finite subgroups } \G  \subset SL(2, \mathbb Z)$. \\
\end{center}
\end{thm}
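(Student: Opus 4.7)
The plan is to reduce the computation to the twisted Hochschild homology of the algebraic noncommutative torus via the standard decomposition
\[
HH_*(\mathcal A_\theta^{alg} \rtimes \G) \;\cong\; \Big( \bigoplus_{g \in \G} HH_*(\mathcal A_\theta^{alg},\, {}_g\mathcal A_\theta^{alg}) \Big)^{\G},
\]
valid for any action of a finite group on a $\mC$-algebra. Here ${}_gA$ is $A := \mathcal A_\theta^{alg}$ equipped with the bimodule structure in which the right action is twisted by $g$, and since $\G$ is abelian every $g$-sector is preserved by the residual $\G$-action.

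To handle the right hand side, I would use the length-two Koszul resolution of $A$ as an $A^e$-module
\[
0 \to A^e \xrightarrow{\, d_2 \,} (A^e)^{\oplus 2} \xrightarrow{\, d_1 \,} A^e \xrightarrow{\, \mu \,} A \to 0,
\]
built from the two Ore commutators associated to the generators $U_1$, $U_2$. Tensoring with ${}_gA$ over $A^e$ produces for each $g \in \G$ a two-term complex
\[
0 \to A \xrightarrow{\,\partial_2^g\,} A \oplus A \xrightarrow{\,\partial_1^g\,} A \to 0
\]
whose homologies are $HH_*(A, {}_gA)$. The vanishing $HH_k(A \rtimes \G) = 0$ for $k \geq 3$ is then immediate from the length of this resolution.

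For $g = e$ this complex recovers Connes' classical answer: $HH_0(A) = \mC$, $HH_1(A) \cong \mC^2$, $HH_2(A) \cong \mC$. The $\G$-action respects the Poincar\'e-type duality pairing, so $\G$ acts trivially on $HH_2$ (every element of $SL(2,\mZ)$ fixes the fundamental class) and as its defining two-dimensional representation on $HH_1$. Since no nontrivial cyclic subgroup of $SL(2,\mZ)$ has a fixed vector in $\mC^2$, we obtain $HH_1(A)^\G = 0$ and $HH_2(A)^\G = \mC$, so the identity sector contributes $\mC$ to $HH_0$, $0$ to $HH_1$, and the fundamental class $\mC$ to $HH_2$.

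For each non-identity $g \in \G$ the expectation is that $HH_*(A, {}_gA)$ is concentrated in degree zero and, after taking $\G$-invariants, has dimension equal to the number of $\G$-orbits in the $g$-fixed locus of the $\G$-action on the torus. This fixed-point arithmetic produces the non-identity contributions $4$ for $\mZ_2$, $3+3$ for $\mZ_3$, $2+3+2$ for $\mZ_4$, and $1+2+2+2+1$ for $\mZ_6$, which combined with the identity sector give the asserted $5, 7, 8, 9$. The main obstacle will be this step: for each nontrivial $g$ one must show, by explicit examination of the twisted differentials on the monomial basis $\{U_1^m U_2^n\}_{(m,n) \in \mZ^2}$ of $A$, that $\partial_2^g$ is injective (so $HH_2(A, {}_gA) = 0$), that $\partial_1^g$ has finite-dimensional cokernel of the correct size (giving $HH_0$), and that the complex is exact in the middle (giving $HH_1(A, {}_gA) = 0$). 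All three assertions depend crucially on $\theta \notin \mQ$, which kills off all non-$g$-fixed monomials through nonvanishing $\lambda$-factors. Assembling the contributions and passing to $\G$-invariants then yields the four cases of the theorem.
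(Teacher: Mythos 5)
Your overall strategy coincides with the paper's: the same decomposition of $HH_*(\mathcal A_\theta^{alg}\rtimes\G)$ into $\G$-invariants of the $g$-twisted Hochschild homologies (the paper gets this from the Getzler--Jones paracyclic spectral decomposition together with a lemma exchanging invariants and homology), the same length-two Koszul resolution adapted from Connes, the same vanishing in degrees $\geq 3$, and the same sector-by-sector bookkeeping, whose arithmetic ($1+4$, $1+3+3$, $1+2+3+2$, $1+1+2+2+2+1$) exactly reproduces the paper's totals. Your identity-sector shortcut --- $\G$ acts trivially on $HH_2$ because $\det=1$ and without fixed vectors on $HH_1\cong\mathbb C^2$ because no nontrivial torsion element of $SL(2,\mathbb Z)$ has eigenvalue $1$ --- is cleaner than the paper's explicit push--pull through the chain maps $h_*$, $k_*$ between the Koszul and bar complexes, but it silently assumes that the induced $\G$-action on $HH_1(\mathcal A_\theta^{alg})$ is (conjugate to) the standard two-dimensional representation; that identification is precisely what the paper's $h_1$/$k_1$ computation establishes, so you would still need some version of it.

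The genuine gap is the twisted sectors, which you yourself flag as ``the main obstacle'': you assert as an expectation that $HH_*(A,{}_gA)$ is concentrated in degree zero with invariant dimension given by counting $\G$-orbits of $g$-fixed points, but supply no argument. The degree-$2$ vanishing and the computation of $\operatorname{coker}\partial_1^g$ are indeed routine monomial-by-monomial checks, but the middle exactness ($HH_1(A,{}_gA)=0$ for $g\neq e$) is not: a kernel element of $\partial_1^g$ is a finitely supported solution of a two-term recurrence on $\mathbb Z^2\oplus\mathbb Z^2$, and one must produce an explicit preimage under $\partial_2^g$. The paper does this by a nontrivial induction on the number of nonzero lattice points of a kernel solution, using a reduction operator $\wedge$ that kills the bottom entry of the leftmost nonzero column while preserving the kernel equations, plus a separate argument that the iteration terminates; ``explicit examination of the twisted differentials'' does not by itself yield this. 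The orbifold fixed-point count is a correct heuristic for the answer but is not a proof in this algebraic setting, so as written the proposal establishes the framework and the expected dimensions but not the theorem.
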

We remark that the result of Theorem 1.1 for $\G =\mathbb Z_2$ is presented in [O] and the group $HH_0(\mathcal A_\theta^{alg} \rtimes \G)$ is calculated in [B]. Theorem 1.1 completes the computation of all Hochschild homology groups for $\mathcal A_\theta^{alg} \rtimes \G$ for all $\G \subset SL(2,\mathbb Z)$.
The periodic cyclic homology groups of $\mathcal A_\theta^{alg} \rtimes \G$ are as follows:
\begin{thm} If $\theta \notin \mathbb Q$,
$HC_{even}(\mathcal A_{\theta}^{alg} \rtimes \G) \cong\begin{cases}
\mathbb C^6   & \text{ for } \G = \mathbb Z_2\\
\mathbb C^8   & \text{ for } \G = \mathbb Z_3\\
\mathbb C^9   & \text{ for } \G = \mathbb Z_4 \\
\mathbb C^{10} & \text{ for } \G = \mathbb Z_6. \end{cases}$\\  The odd homology
$HC_{odd}(\mathcal A_{\theta}^{alg} \rtimes \G)\cong 0 \text{ for all finite subgroups } \G \subset SL(2, \mathbb Z)$. \\
\end{thm}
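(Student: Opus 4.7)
The plan is to derive Theorem 1.2 from Theorem 1.1 by running Connes' $SBI$ long exact sequence
\[
\cdots\to HH_n(A)\xrightarrow{I}HC_n(A)\xrightarrow{S}HC_{n-2}(A)\xrightarrow{B}HH_{n-1}(A)\xrightarrow{I}HC_{n-1}(A)\to\cdots
\]
for $A=\mathcal A_\theta^{alg}\rtimes\G$ and then taking the inverse limit under $S$ to reach the periodic cyclic groups $HC_{even}$ and $HC_{odd}$.

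Write $N=\dim_{\mathbb C}HH_0(A)\in\{5,7,8,9\}$. The Hochschild inputs from Theorem 1.1 are $HH_0=\mathbb C^N$, $HH_1=0$, $HH_2=\mathbb C$, and $HH_k=0$ for $k\ge 3$. I would unwind the sequence in low degrees: at the bottom $HC_0(A)=HH_0(A)=\mathbb C^N$; the segment
\[
HC_2(A)\xrightarrow{S}HC_0(A)\to HH_1(A)=0\to HC_1(A)\xrightarrow{S}HC_{-1}(A)=0
\]
forces $HC_1(A)=0$; and, using $HH_1=0$ on the right and $HC_{-1}=0$ on the left, the next level collapses to
\[
0\to HH_2(A)\to HC_2(A)\to HC_0(A)\to 0,
\]
which splits as $\mathbb C$-vector spaces and gives $HC_2(A)\cong\mathbb C^{N+1}$. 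With $HH_3=0$ and $HC_1=0$ one then obtains $HC_3(A)=0$.

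For every $n\ge 4$ the vanishing of both $HH_n$ and $HH_{n-1}$ makes $S:HC_n(A)\to HC_{n-2}(A)$ an isomorphism, so $HC_{2k}(A)\cong HC_2(A)=\mathbb C^{N+1}$ and $HC_{2k+1}(A)\cong HC_3(A)=0$ for all $k\ge 1$. The inverse system defining the periodic cyclic homology is therefore eventually constant, and
\[
HC_{even}(A)\cong\mathbb C^{N+1},\qquad HC_{odd}(A)=0,
\]
which reproduces the values $N+1\in\{6,8,9,10\}$ recorded in the theorem.

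The main obstacle is bookkeeping rather than substance: one must have $HH_3(A)=0$, whereas Theorem 1.1 as stated only records vanishing in degrees $k>3$. This is not an extra theorem but a byproduct of the length-two projective resolution used to establish Theorem 1.1; that resolution forces $HH_k=0$ for all $k$ exceeding the Hochschild dimension of $A$, which is two, so the vanishing of $HH_3$ is read off at the same time as $HH_k=0$ for $k>3$. Granted this, the SBI cascade above is entirely mechanical and needs no further input.
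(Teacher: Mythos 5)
Your argument is correct and reaches the stated dimensions, but it is organized differently from the paper's. The paper never runs the $S,B,I$ sequence on the crossed product $A=\mathcal A_\theta^{alg}\rtimes\G$ itself; instead it applies the sequence separately to each summand of the paracyclic decomposition $HH_\bullet(A)\cong\bigoplus_{g\in\G}HH_\bullet((\mathcal A^{alg}_{\theta,g})^{\G})$: for $g\neq 1$ the twisted components have $HH_1=HH_2=0$, so $HC_{2k}\cong HC_0\cong HH_0$ and the odd groups vanish, while the $g=1$ component contributes $\mathbb C^2$ in stable even degree ($\mathbb C$ from $HH_2^{\G}$ plus $\mathbb C$ from $HC_0^{\G}$); the theorem is then obtained by summing over $g$. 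Your global version consumes exactly the same Hochschild input (Theorem 1.1) and is cleaner in that it avoids having to justify that $S$, $B$, $I$ are compatible with the group action and with the decomposition, a point the paper passes over with a one-line remark in each section; on the other hand, the paper's componentwise route additionally records the cyclic homology of each twisted summand, which your argument does not recover. Your observation that $HH_3(A)=0$ is needed and follows from the length-two Koszul resolution is a genuine, if minor, repair of the statement of Theorem 1.1 (which records vanishing only for $k>3$); the paper itself silently uses $HH_3=0$ in each of its $S,B,I$ computations. Finally, you are right to note that $HC_{even}$ must be read as the stable value of $HC_{2k}$ under $S$ (equivalently the periodic group, the inverse system being eventually constant so that the relevant $\lim^1$ term vanishes); this is also the convention the paper adopts without comment.
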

\begin{thm}
For $\theta \notin \mathbb Q, HH_2(\mathcal A_\theta \rtimes \G) \cong \mathbb C$.
\end{thm}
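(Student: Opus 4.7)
The plan is to adapt the strategy behind Theorem 1.1 to the smooth setting by invoking the standard decomposition of Hochschild homology for crossed products by a finite group:
\[
HH_\bullet(\mathcal{A}_\theta \rtimes \G) \;\cong\; \Bigl(\bigoplus_{g \in \G} HH_\bullet(\mathcal{A}_\theta,\, {}_{g}\mathcal{A}_\theta)\Bigr)^{\!\G},
\]
where ${}_{g}\mathcal{A}_\theta$ denotes $\mathcal{A}_\theta$ with its right action twisted by $g$. This reduces the theorem to analyzing each summand $HH_2(\mathcal{A}_\theta, {}_{g}\mathcal{A}_\theta)$ and extracting the $\G$-invariant part.

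For $g = e$, Connes' calculation for the smooth noncommutative torus gives $HH_2(\mathcal{A}_\theta) \cong \mathbb{C}$, with generator naturally identified with the top exterior power $\Lambda^2 \mathbb{C}^2$ built from the infinitesimal torus action. Since every element of $SL(2,\mathbb{Z})$ has determinant $1$, it fixes this generator, so the $\G$-invariant part is the full $\mathbb{C}$.

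For any non-identity $g$ in one of the finite subgroups $\mathbb{Z}_2,\mathbb{Z}_3,\mathbb{Z}_4,\mathbb{Z}_6$ of $SL(2,\mathbb{Z})$, the eigenvalues of $g$ are primitive roots of unity of order $2,3,4,$ or $6$, so $1-g$ is invertible on $\mathbb{C}^2$. I would then compute $HH_2(\mathcal{A}_\theta, {}_{g}\mathcal{A}_\theta)$ via a twisted Koszul-type resolution of $\mathcal{A}_\theta$ parallel to the one used for Theorem 1.1; the top differential is governed by operators of the form $\mathrm{id}-g_*$ acting on the smooth algebra, and the invertibility of $1-g$ then forces the top twisted homology to vanish. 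Collecting contributions, only $g = e$ survives, giving $HH_2(\mathcal{A}_\theta \rtimes \G)\cong\mathbb{C}$.

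The principal obstacle is the passage from algebraic to smooth: one must verify that the inverse of $\mathrm{id}-g_*$ acts continuously on the Schwartz-type space of rapidly decreasing sequences underlying $\mathcal{A}_\theta$, since in the smooth setting a merely formal algebraic inverse is not enough. Diagonalizing $g$ over $\mathbb{C}$ reduces the question to multiplication by $(1-\zeta)^{-1}$ for $\zeta$ a primitive root of unity distinct from $1$; because $|1-\zeta|$ is uniformly bounded below, the inverse preserves rapid decay and the vanishing transfers cleanly to the smooth category, completing the argument.
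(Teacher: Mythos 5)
Your overall strategy coincides with the paper's: decompose $HH_2(\mathcal A_\theta\rtimes\G)$ into the $\G$-invariants of the twisted groups $H_2(\mathcal A_\theta,{}_g\mathcal A_\theta)$, observe that the $g=e$ summand contributes Connes' one-dimensional $HH_2(\mathcal A_\theta)$ whose generator (supported at $U_1^{-1}U_2^{-1}$, the ``volume'' class) is fixed by $\G$, and show that the twisted summands vanish in degree $2$ for $g\neq e$. The conclusion is right and no Diophantine condition is needed, exactly as in the paper. (Note the paper only writes out the $\mathbb Z_2$ case explicitly; your treatment of all four groups at once via the eigenvalues of $g$ is a genuine, if small, improvement in uniformity.)

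The one step I would press you on is the mechanism you give for the vanishing of $H_2(\mathcal A_\theta,{}_g\mathcal A_\theta)$. Since $H_2$ is the \emph{kernel} of the top Koszul differential $1\otimes b_2$, what you need is injectivity of a pair of twisted difference operators on $\mathcal S(\mathbb Z^2)$, not the construction of a continuous inverse; your discussion of whether $(\mathrm{id}-g_*)^{-1}$ preserves rapid decay, and the reduction to multiplication by $(1-\zeta)^{-1}$ after diagonalizing $g$ over $\mathbb C$, addresses a surjectivity-type (cokernel) question that does not arise here, and the diagonalization does not literally make sense for the operator in question, which acts by shifting Fourier coefficients along integer lattice vectors. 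The argument that actually works --- and is what the paper does for $\mathbb Z_2$ --- is this: the condition $(1\otimes b_2)(a)=0$ translates into recursions of the form $a_{(n,m)}=\mu\, a_{(n,m)+v}$ with $|\mu|=1$ and $v\in\mathbb Z^2$ a nonzero lattice vector (essentially $v=(1-g)e_j$, and this is precisely where the invertibility of $1-g$ on $\mathbb C^2$ enters, by guaranteeing $v\neq 0$); iterating gives infinitely many Fourier coefficients of equal modulus, which is incompatible with rapid decay unless $a=0$. If you replace your continuity-of-the-inverse paragraph with this injectivity argument, the proof is complete and matches the paper's.
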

We also examine the $\G$ invariant cycles in $HH_\bullet(\mathcal A_\theta)$ for $\bullet = 0,1$.
\begin{thm}
For $\theta \notin \mathbb Q$ satisfying Diophantine condition, we have 
\begin{center}
$HH_1(\mathcal A_\theta)^{\G} =0$, and $H_0(\mathcal A_\theta)^{\G} \cong \mathbb C$.
\end{center}
\end{thm}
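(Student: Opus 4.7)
The plan is to leverage Connes' classical computation of $HH_\bullet(\mathcal A_\theta)$ under the Diophantine hypothesis and then identify the $\G$-action on these homology groups explicitly. Under the Diophantine assumption on $\theta$, Connes showed
\[
HH_0(\mathcal A_\theta) \cong \mC, \qquad HH_1(\mathcal A_\theta) \cong \mC^2, \qquad HH_2(\mathcal A_\theta) \cong \mC,
\]
where $HH_0$ is generated by the image of $1$ (equivalently, detected by the canonical trace $\tau$), and $HH_1$ is generated by the classes of the Hochschild $1$-cycles $U_1^{-1}\otimes U_1$ and $U_2^{-1}\otimes U_2$ (dually, by the canonical derivations $\delta_1,\delta_2$).

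First I would treat $HH_0$. Since $HH_0(\mathcal A_\theta) \cong \mC\cdot[1]$ and the generator $[1]$ is fixed by every automorphism of the algebra, we get $HH_0(\mathcal A_\theta)^{\G} \cong \mC$ for free, with no computation.

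The substantive step is to analyse the induced $\G$-action on $HH_1(\mathcal A_\theta)\cong\mC^2$. I would fix the basis $\{[U_1^{-1}\otimes U_1],[U_2^{-1}\otimes U_2]\}$ and compute, using the formulas for $\rho_g$ on generators given in Section 2, that the induced action on this basis is (up to scalars that disappear in homology) precisely the defining representation $g\mapsto g$ of $\G$ as a subgroup of $SL(2,\mZ)\subset GL(2,\mC)$. Concretely, applying $\rho_g$ to $U_i^{-1}\otimes U_i$, using the Leibniz-type identity $a\otimes bc \sim ca\otimes b + ac\otimes b$ inside the Hochschild complex and the commutation relation in $\mathcal A_\theta$, and reducing modulo Hochschild boundaries, one sees that classes of commutators and scalar factors $e^{(\pi i g_{1,1}g_{2,1})\theta}$ drop out, leaving the linear action by the matrix $g$ itself on the basis. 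This identification is the main (and only) technical obstacle, since one must verify that all error terms are indeed Hochschild boundaries; this is where Connes' explicit resolution (the Koszul-type resolution for $\mathcal A_\theta$) is needed.

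Once the identification is established, the conclusion is immediate from elementary linear algebra: each nontrivial element of $\mathbb Z_2,\mathbb Z_3,\mathbb Z_4,\mathbb Z_6$ embedded in $SL(2,\mZ)$ has order $n\in\{2,3,4,6\}$ and its two eigenvalues are primitive $n$-th roots of unity (trace $-2,-1,0,1$ respectively), so $1$ is not an eigenvalue and $g-I$ is invertible on $\mC^2$. Hence the only $\G$-fixed vector is $0$, giving $HH_1(\mathcal A_\theta)^{\G}=0$, which completes the proof.
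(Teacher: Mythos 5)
Your proposal is correct, and it reaches the same conclusion as the paper from the same starting point (Connes' computation $HH_0\cong\mathbb C$, $HH_1\cong\mathbb C^2$ under the Diophantine condition), but the endgame is organized differently. The paper works at the chain level: it pushes the generators $\varphi^1_{-1,0}\otimes e_1$, $\varphi^2_{0,-1}\otimes e_2$ of the Koszul complex into the bar complex via $h_1$, applies the group element there, pulls back via $k_1$, and for $\G=\mathbb Z_2$ computes explicitly that the class is sent to its negative (hence invariants vanish), asserting that the cases $\mathbb Z_3,\mathbb Z_4,\mathbb Z_6$ are analogous. You instead identify the induced $\G$-representation on $HH_1(\mathcal A_\theta)\cong\mathbb C^2$ once and for all as the defining representation $g\mapsto g$ of $\G\subset SL(2,\mathbb Z)$ (in the basis $[U_1^{-1}\otimes U_1],[U_2^{-1}\otimes U_2]$, the image of $U_i$ under $\rho_g$ is a scalar times $U^{\nu}$ with $\nu$ the $i$-th column of $g$, the scalars cancel in $u^{-1}\otimes u$, and the logarithmic-derivative identity $[(uv)^{-1}\otimes uv]=[u^{-1}\otimes u]+[v^{-1}\otimes v]$ modulo boundaries gives linearity), and then finish by noting that every nontrivial element of order $2,3,4,6$ in $SL(2,\mathbb Z)$ has trace $-2,-1,0,1$ and hence eigenvalues that are primitive roots of unity, never $1$, so the fixed subspace is zero. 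What your route buys is uniformity and transparency: a single linear-algebra argument covers all four groups and explains \emph{why} the invariants vanish, whereas the paper's route is fully explicit only for $\mathbb Z_2$. What the paper's route buys is that the chain-level push-pull through $h_1$ and $k_1$ is exactly the verification that your ``error terms are boundaries,'' i.e.\ it substantiates the one step you flag but do not carry out; that step is standard and correct, so your argument stands. For $HH_0$ both arguments are the same triviality: the generator is the class of $1$ (equivalently $a_{0,0}$), which every algebra automorphism fixes.
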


\section{Strategy of the proof}
We prove the above theorems by decomposing each homology group into several simpler group associated to each group element $g \in \Gamma$. Thereafter we develop a resolution for the algebra $\mathcal A_\theta^{alg}$ and calculate the homologies. We shall do so by using an innovative technique of diagram representation for the chains and cycles. We illustrate this by giving  some examples here.
\par Conside the equation $a^1_{6,1} - a^2_{5,0} = \lambda a^1_{4,1} - {\lambda}^{-5} a^2_{5,2}$.\\
The elements $a^1_{6,1}, a^1_{4,1}, a^2_{5,0}$ and $a^2_{5,2}$ can be considered as point in the lattice plane $\mathbb Z^2$.  We respresent this equation as a diagram on the latttice $\mathbb Z^2$ in the following way.\\
\begin{center}
\begin{tikzpicture}
\draw (-1,0)node(xline)[left] {$a^1_{4,1}$} -- (1,0 )node(xline)[right] {$a^1_{6,1}$};
\draw[draw=white,double=black,very thick] (0,-1)node(yline)[below] {$a^2_{5,0}$} -- (0,1)node(yline)[above] {$a^2_{5,2}$};
\fill (canvas cs:x=0cm,y=1cm) circle (2pt);
\fill (canvas cs:x=0cm,y=-1cm) circle (2pt);
\node at (-1,0) [transition]{};
\node at (1,0) [transition]{};
\end{tikzpicture}
\end{center}
In the proofs we shall use this method to find the homology groups of the product algebars $\mathcal A_\theta^{alg} \rtimes \Gamma$.

\section{Paracyclic modules and spectral decomposition}

One can use the results of [EJ] to deduce the following decomposition of the homology group of the algebra $\mathcal A_\theta^{alg} \rtimes \G$.
\begin{thm}[EJ]
If $\G$ is finite and $|\G|$ is invertible in k, then there is a natural isomorphism of cyclic homology and
\begin{center}
$HH_\bullet(\mathcal A_\theta^{alg} \rtimes \G)= HH_\bullet(H_0(\G, (\mathcal A_\theta^{alg})_\G^\sharp),$
\end{center}
where $(H_0(\G, (\mathcal A_\theta^{alg})_\G^\sharp)$ is the cyclic module
\begin{center}
$H_0(\G,(\mathcal A_\theta^{alg})_\G^\sharp)(n) =  H_0(\G,k[\G] \otimes (\mathcal A_\theta^{alg})^{\otimes {n+1}})$.
\end{center}
\end{thm}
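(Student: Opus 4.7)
The plan is to derive this decomposition by exhibiting the cyclic bar construction of $\mathcal A_\theta^{alg} \rtimes \G$ as the coinvariants of a paracyclic object carrying an equivariant $\G$-action, and then invoking the Getzler--Jones formalism from [EJ] for the resulting spectral sequence.

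First, I would set up the fundamental identification at the level of simplicial $k$-modules:
\begin{equation*}
(\mathcal A_\theta^{alg} \rtimes \G)^{\otimes(n+1)} \;\cong\; \bigoplus_{(g_0,\dots,g_n)\in \G^{n+1}} (\mathcal A_\theta^{alg})^{\otimes(n+1)} \otimes g_0 \otimes \cdots \otimes g_n.
\end{equation*}
On the right-hand side I would rewrite the Hochschild face, degeneracy and cyclic operators in terms of the $\G$-twisted action on $\mathcal A_\theta^{alg}$, so that each sector $(g_0,\dots,g_n)$ is permuted and rescaled by the structure maps. The key observation is that only the product $g = g_n \cdots g_0$ is preserved under the cyclic operator, which therefore sends the $(g_0,\dots,g_n)$-sector into the $(g_n,g_0,\dots,g_{n-1})$-sector; this is exactly the shape of a paracyclic object indexed by conjugacy classes of $\G$.

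Next I would introduce the simultaneous conjugation action of $\G$ on $k[\G]\otimes(\mathcal A_\theta^{alg})^{\otimes(n+1)}$ and verify that it commutes with the paracyclic structure. Taking $\G$-coinvariants on each simplicial degree then produces a genuine cyclic module, namely
\begin{equation*}
H_0\bigl(\G,\,(\mathcal A_\theta^{alg})_\G^\sharp\bigr)(n) \;=\; H_0\bigl(\G,\,k[\G]\otimes(\mathcal A_\theta^{alg})^{\otimes(n+1)}\bigr),
\end{equation*}
since the paracyclic operator $t^{n+1}$ acts as conjugation by the element $g\in\G$ labeling a sector and hence becomes trivial after passing to coinvariants. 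The decomposition in [EJ] (for a discrete group acting on an algebra over a field in which $|\G|$ is invertible) identifies the Hochschild complex of the crossed product with the Hochschild complex of precisely this cyclic module; this is the step I would cite most directly.

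The main obstacle is purely bookkeeping rather than conceptual: one must check that the twisted face and degeneracy maps arising from the multiplication $(a g)(b h) = a\,\rho_g(b)\, gh$ in $\mathcal A_\theta^{alg}\rtimes\G$ match, after averaging by the idempotent $\tfrac{1}{|\G|}\sum_{\gamma\in\G}\gamma$, the standard Hochschild operators on $H_0(\G,(\mathcal A_\theta^{alg})_\G^\sharp)$. Invertibility of $|\G|$ ensures that $\G$-coinvariants compute $H_0(\G,-)$ exactly, so no higher group-homology correction terms enter. Once this identification of cyclic modules is in place the theorem follows because Hochschild homology is a functor of the underlying simplicial object, and the cited result of [EJ] packages this verification into a natural isomorphism.
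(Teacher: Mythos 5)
The paper does not actually prove this statement: it is quoted verbatim (as ``Theorem [EJ]'') from Getzler--Jones, \emph{The cyclic homology of crossed product algebras} (the reference listed as [GJ] in the bibliography), and the author simply invokes it before specializing to the abelian case in the next displayed equation. Your proposal is therefore not competing with an argument in the paper; it is a sketch of the proof in the cited source, and as such it is essentially correct and follows the standard route: decompose $(\mathcal A_\theta^{alg}\rtimes\G)^{\otimes(n+1)}$ into sectors indexed by $\G^{n+1}$, observe that the face and cyclic operators permute sectors so that the conjugacy class of the product $g_n\cdots g_0$ is the only invariant (making the total object paracyclic over $k[\G]\otimes(\mathcal A_\theta^{alg})^{\otimes(n+1)}$ with $t^{n+1}$ acting by conjugation), pass to $\G$-coinvariants to kill $t^{n+1}$ and obtain a genuine cyclic module, and use invertibility of $|\G|$ to ensure the coinvariant functor is exact so that no higher group homology contributes. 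Two small imprecisions worth tightening: the cyclic operator preserves the product $g_n\cdots g_0$ only up to conjugacy (you say this correctly a sentence later, but the phrase ``only the product is preserved'' is misleading as written); and ``invertibility of $|\G|$ ensures that coinvariants compute $H_0(\G,-)$'' is a tautology --- $H_0$ \emph{is} coinvariants --- what invertibility actually buys is that coinvariants is an exact functor, so the $E^2$ page of the associated spectral sequence is concentrated in the $H_0$ row and the isomorphism on $HH_\bullet$ follows. With those clarifications your outline matches the argument the paper is relying on.
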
 
Since $\G$ is abelian, we can conclude that the group homology $H_0(\G, \mathcal {A}^{alg \natural}_{\theta, \G})$ splits the complex into $|\G|$ disjoint parts.
\begin{center} 
$H_0(\G,\mathcal {A}^{alg \natural}_{\theta,\G})(n) = H_0(\G, k[\G] \otimes (\mathcal {A}_{\theta}^{alg})^{\otimes {n+1}}) = \displaystyle \bigoplus_{t \in \G} ((\mathcal {A}_{\theta, t}^{alg})^{\otimes {n+1}})^{\G}$\\
\end{center}
For each $t \in \G$ ,  the algebra $\mathcal {A}_{\theta, t}^{alg}$ is setwise $\mathcal A_\theta^{alg}$ with the Hochschild differential ${}_{t}b$ in the Hochschild complex $C_\bullet(\mathcal A^{alg}_{\theta , t},\mathcal A^{alg}_{\theta , t})$.
Hence, we can write its Hochschild homology, $HH_\bullet(\mathcal A_\theta^{alg} \rtimes \G)$ as follows
\begin{center}
$HH_\bullet(\mathcal A_\theta^{alg} \rtimes \G)= HH_\bullet(H_0(\G,\mathcal {A}^{alg \natural}_{\theta, \G}))=\displaystyle \bigoplus_{t \in \G} HH_\bullet((\mathcal {A}_{\theta , t}^{alg \bullet})^{\G})$.
\end{center} 
It is enough to calculate $HH_\bullet((\mathcal {A}_{\theta, t}^{alg \bullet})^{\G})$ for each $t \in \G$. To calculate these individual homology groups, we can use the following lemma. Proof of which is an easy exercise.

\begin{lemma}
Let 
\begin{center}
$J_\ast :=  0 \xleftarrow{d_v} A \xleftarrow{d_v} (A^{\otimes2}) \xleftarrow{d_v} (A^{\otimes3}) \xleftarrow{d_v} (A^{\otimes4}) \xleftarrow{d_v} (A^{\otimes5}) \xleftarrow{d_v} ...$
\end{center}
be a chain complex. For a given $\G$ action on $A$, consider the following chain complex, with chain map $d_v^\G :(A^{\otimes n})^\G \rightarrow (A^{\otimes{n-1}})^\G$ induced from the map $d_v : A^{\otimes n} \rightarrow A^{\otimes{n-1}}$.
\begin{center}
$J^\G_{\ast} := 0 \xleftarrow{d_v^\G} A^\G \xleftarrow{d_v} (A^{\otimes2})^\G \xleftarrow{d_v^\G} (A^{\otimes3})^\G \xleftarrow{d_v^\G} (A^{\otimes4})^\G \xleftarrow{d_v^\G} (A^{\otimes5})^\G \xleftarrow{d_v^\G} ...$
\end{center}
With the $\G$ action commuting with the differential $d_v$. We have the following group equality $H_q(J_\ast^\G, d_v^\G)=H_q(J_\ast, d_v)^\G$.
\end{lemma}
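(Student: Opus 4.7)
The plan is to exploit the fact that since $\G$ is finite and we are working over $\mathbb{C}$ (where $|\G|$ is invertible), the averaging idempotent
\[
e := \frac{1}{|\G|}\sum_{g \in \G} g
\]
is a projection from each $A^{\otimes n}$ onto $(A^{\otimes n})^\G$. Because the $\G$-action commutes with $d_v$, the operator $e$ is a chain map; this is the one structural fact doing all the work.

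With $e$ in hand, I would first verify the easy inclusion $H_q(J_\ast^\G, d_v^\G) \hookrightarrow H_q(J_\ast, d_v)^\G$. A $\G$-invariant cycle $z \in (A^{\otimes n})^\G$ with $d_v^\G(z) = 0$ is, tautologically, a cycle in $J_\ast$, and any $\G$-invariant boundary is a boundary in $J_\ast$, so the assignment $[z]_\G \mapsto [z]$ is well-defined. Moreover, its image lies in $H_q(J_\ast, d_v)^\G$ because $\G$ acts trivially on the invariant representative $z$.

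The content is the reverse direction. Given a class $[z] \in H_q(J_\ast, d_v)^\G$, I need to produce a $\G$-invariant representative and show that $\G$-invariant boundaries in $J_\ast$ come from boundaries in $J_\ast^\G$. For the first: if $z$ is a cycle whose class is $\G$-invariant, then for each $g \in \G$ there is some $w_g$ with $g \cdot z - z = d_v(w_g)$; averaging gives a single chain $w$ with $e(z) - z = d_v(w)$, so $e(z)$ is a $\G$-invariant cycle representing $[z]$. For the second: if $z \in (A^{\otimes n})^\G$ is a cycle and $z = d_v(w)$ for some (not necessarily invariant) $w \in A^{\otimes n+1}$, then applying $e$ yields
\[
z \;=\; e(z) \;=\; e(d_v(w)) \;=\; d_v(e(w)),
\]
and $e(w) \in (A^{\otimes n+1})^\G$, so $z$ is a boundary in $J_\ast^\G$. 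Combining both directions gives the claimed equality $H_q(J_\ast^\G, d_v^\G) = H_q(J_\ast, d_v)^\G$.

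There is no serious obstacle here beyond checking that $e$ commutes with $d_v$, which is immediate from the hypothesis that $\G$ commutes with $d_v$; the entire argument is the usual Maschke-style observation that the invariants functor is exact when $|\G|$ is invertible in the ground field, so it commutes with taking homology. This is exactly the setting needed for Theorem 4.1, since the coefficient field is $\mathbb{C}$ and the subgroups considered have orders $2, 3, 4, 6$.
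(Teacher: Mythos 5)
Your proof is correct: the paper leaves this lemma as ``an easy exercise,'' and your averaging argument via the idempotent $e=\frac{1}{|\G|}\sum_{g\in\G}g$ (commuting with $d_v$, hence giving well-definedness, surjectivity by averaging a representative of a $\G$-invariant class, and injectivity by averaging the bounding chain) is exactly the standard intended argument. You are also right to flag that the statement tacitly requires $|\G|$ to be invertible in the ground field, which holds here since the coefficients are $\mathbb{C}$ and $\G$ is one of $\mathbb Z_2,\mathbb Z_3,\mathbb Z_4,\mathbb Z_6$.
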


\section{Revisiting Connes' resolution}

Firstly we have to ensure if the projective resolution introduced in [C] can be adjusted to the algebraic noncommutative torus algebra:

\begin{lemma} The following is a projective resolution of $\mathcal A_\theta^{alg}$
\begin{center}
$\mathcal A^{alg}_\theta \xleftarrow{\epsilon} \mathfrak B^{alg}_\theta \xleftarrow{b_1} \mathfrak B^{alg}_\theta \displaystyle \bigoplus \mathfrak B^{alg}_\theta \xleftarrow{b_2} \mathfrak B^{alg}_\theta$
\end{center}
where, $\mathfrak B^{alg}_\theta = \mathcal A_\theta^{alg} \otimes (\mathcal A_\theta^{alg})^{op}$ \newline
$\epsilon(a\otimes b ) = ab, \newline
b_1(1\otimes e_j)= 1\otimes {U_j}- {U_j}\otimes 1 \text{ and } \newline
b_2(1\otimes( e_1 \wedge e_2 ) ) = (U_2\otimes 1 - \lambda \otimes U_2 )\otimes e_1- ( \lambda U_1\otimes 1 - 1\otimes U_1 )\otimes e_2.$
\end{lemma}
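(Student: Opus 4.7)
The plan is to verify the three standard requirements for the sequence to be a projective bimodule resolution of $\mathcal A_\theta^{alg}$: projectivity of each term, the chain identities $d^2 = 0$, and exactness at every inner spot. Projectivity is immediate, since each term in the sequence is a finite direct sum of copies of $\mathfrak B^{alg}_\theta = \mathcal A_\theta^{alg} \otimes (\mathcal A_\theta^{alg})^{op}$, which is the free $\mathcal A_\theta^{alg}$-bimodule of rank one and hence projective. The chain identities $\epsilon \circ b_1 = 0$ and $b_1 \circ b_2 = 0$ follow from $\mathcal A_\theta^{alg}$-bilinearity once checked on the generators $1 \otimes e_j$ and $1 \otimes (e_1 \wedge e_2)$; the first collapses trivially, and for the second an explicit expansion shows that all eight terms pair off after applying the commutation relation $U_2 U_1 = \lambda U_1 U_2$, the prefactor $\lambda$ in the definition of $b_2$ being precisely what is required to force this cancellation.

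The substantive content is exactness. My approach diverges from Connes' original proof for the smooth algebra, which used Fourier-analytic contracting homotopies whose convergence required the rapid-decay condition defining $\mathcal A_\theta$. In the purely algebraic setting I would instead exploit the $\mathbb Z^2$-grading of $\mathcal A_\theta^{alg}$ given by the bidegree of the monomials $U_1^m U_2^n$. Writing $\mathfrak B^{alg}_\theta$ as a free left $\mathcal A_\theta^{alg}$-module with basis $\{1 \otimes U_1^p U_2^q\}_{(p,q) \in \mathbb Z^2}$, the entire complex splits as a direct sum of graded subcomplexes indexed by $(p,q)$, on each of which $b_1$ and $b_2$ act by specific twisted shifts. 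Exactness then reduces to a finite-dimensional Koszul-type calculation in each bidegree, formal in $\lambda$. Irrationality of $\theta$, equivalently the fact that $\lambda$ is not a root of unity, is what guarantees that the relevant scalar coefficients appearing in these graded differentials are nonzero away from the trivial degree, so that the Koszul pattern collapses to the expected two-term augmented resolution.

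The main obstacle I expect is the bookkeeping of the twisting factors $\lambda$ that arise each time one commutes $U_j \otimes 1$ past $1 \otimes U_1^p U_2^q$. I would organize this by first rewriting $\mathfrak B^{alg}_\theta$ as $\mathcal A_\theta^{alg} \otimes \mathbb C[\mathbb Z^2]$ through a clean change of basis, under which $b_1$ and $b_2$ assume the form of commutator-type derivations paralleling the algebraic de Rham differential on the noncommutative torus. Once that identification is set up, the graded exactness becomes a classical Koszul verification for a regular sequence of length two, and the lemma follows.
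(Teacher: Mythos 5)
The routine parts of your plan are fine: each term is a finite direct sum of copies of the free bimodule $\mathfrak B^{alg}_\theta$, and the identities $\epsilon\circ b_1=0$ and $b_1\circ b_2=0$ are checked on generators exactly as you say. The difficulty is in the exactness step, which carries all the content, and your description of it contains two concrete errors. First, the complex does \emph{not} split over the right-hand basis $\{1\otimes U_1^pU_2^q\}$: right multiplication by $1\otimes U_1-U_1\otimes 1$ turns $a\otimes U_1^pU_2^q$ into a term with right factor $U_1^{p+1}U_2^q$ minus a term with left factor $aU_1$, so it shifts the index $(p,q)$ rather than preserving it. A decomposition into subcomplexes exists only for the \emph{total} bidegree of $U_1^aU_2^b\otimes U_1^cU_2^d$, namely $(a+c,b+d)$, and only after the appropriate degree shifts on the Koszul legs; a family of subspaces on which the differential ``acts by shifts'' is not a family of subcomplexes. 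Second, once you grade correctly, the homogeneous piece of total bidegree $(m,n)$ is spanned by $\{U_1^aU_2^b\otimes U_1^{m-a}U_2^{n-b}\}_{(a,b)\in\mathbb Z^2}$, an infinite-dimensional space of finitely supported functions on $\mathbb Z^2$; there is no ``finite-dimensional Koszul-type calculation.'' What remains in each degree is a complex of twisted difference operators on $\mathbb C[\mathbb Z^2]$ whose exactness still has to be proved, and the proof must invoke the finite-support hypothesis (any nonzero solution of the kernel equations would propagate to infinitely many lattice points). That verification is the actual substance of the lemma, and your proposal does not supply it.

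Relatedly, irrationality of $\theta$ is not what makes the resolution exact: the statement holds for every $\theta$ (every $\lambda\neq 0$), since $\mathfrak B^{alg}_\theta$ is a twisted group algebra of $\mathbb Z^4$ and hence a domain, and no step of the exactness argument divides by a quantity of the form $\lambda^k-1$. Irrationality only enters later, when the resolution is tensored down to compute $HH_\bullet$. For comparison, the paper's own proof avoids grading entirely and argues by explicit division: for $\ker\epsilon\subset \mathrm{im}\,b_1$ it uses a telescoping identity expressing $1\otimes U_1^{n_1}U_2^{n_2}-U_1^{n_1}U_2^{n_2}\otimes 1$ in terms of $1\otimes U_j-U_j\otimes 1$, and for $\ker b_1\subset \mathrm{im}\,b_2$ it sets $Z=\lambda U_2^{-1}\otimes U_2$, expands $x_1=\sum a_kZ^k$, deduces that the coefficients sum to zero, and telescopes to produce an explicit preimage $y$. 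Your graded strategy can be repaired --- grade by total bidegree and then run the finite-support argument in each piece, which is essentially the ``kernel diagram'' method the paper uses in its later sections --- but as written the decomposition is set up incorrectly, the key exactness verification is absent, and the role you assign to irrationality is mistaken.
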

\begin{proof}Since $\epsilon b_1(1\otimes e_{j})= U_j-U_j=0$, hence we have $im(b_1) \subset ker(\epsilon)$. Let an element $x \in ker(\epsilon)$ be 
\begin{center}
$x=\displaystyle \sum a_{\nu, {\nu}'}X^{\nu}Y^{{\nu}'}$, where $X^{\nu}=U_1^{n_1}U_2^{n_2}\otimes 1, Y^{{\nu}'}=1\otimes U_1^{{n_1}'}U_2^{{n_2}'}.$\\
\end{center}
Hence, $x=\displaystyle \sum a_{\nu, {\nu}'}X^{\nu}(Y^{{\nu}'}-X^{{\nu}'})$. Consider the following equality
\begin{center}
$(1\otimes {U_2}^{n_2})(1\otimes {U_1}^{n_1})-({U_1}^{n_1}\otimes 1)({U_2}^{n_2}\otimes 1) = (1\otimes {U_2}^{n_2})(\displaystyle \sum_{j=0}^{n_{1}-1}U_1^j\otimes U_1^{n_{1}-1-j})(1\otimes U_{1}-U_{1}\otimes 1) \newline +({U_1}^{n_1}\otimes 1)(\displaystyle \sum_{j=0}^{n_{2}-1}U_2^j\otimes U_2^{n_{2}-1-j})(1\otimes U_{2}-U_{2}\otimes 1)$.
\end{center}
Since $x=\displaystyle \sum a_{\nu, {\nu}'}X^{\nu}(Y^{{\nu}'}-X^{{\nu}'})$ can be written in terms of the image of the right hand side coefficients in above equation. Hence $ker(\epsilon) \text{ is generated by } (1\otimes U_{1}-U_{1}\otimes 1) \text{ and }(1\otimes U_{2}-U_{2}\otimes 1).$

\par Given an element $x= x_1\otimes e_1 - x_2\otimes e_2$ where, $x_1 , x_2 \in \mathcal{A}_\theta^{alg} \in ker(b_1)$, we have, 
\begin{center}
$x_1(1\otimes U_{1}-U_{1}\otimes 1)=x_2(1\otimes U_{2}-U_{2}\otimes 1)$.\\\
\end{center}
To prove $x \in Im b_2$, it is enough to find $y \in \mathcal A^{alg}_\theta \otimes^{op} \mathcal A_\theta^{alg}$ such that $x_1=y(U_{2}\otimes 1 - \lambda\otimes U_{2})$. Let $Z=(\lambda U_2^{-1}\otimes U_2)$ then we have the following 
\begin{center}
$x_1(\displaystyle \sum_{-\infty}^{\infty}Z^k)=0$.
\end{center}
Also one can calculate that 
\begin{center}
$x_1(\displaystyle \sum_{-\infty}^{\infty}Z^k)(1\otimes U_{1}-U_{1}\otimes 1)=x_1(1\otimes U_{1}-U_{1}\otimes 1)\displaystyle \sum_{-\infty}^{\infty}(U_2^{-1}\otimes U_2)^k
= x_2(1\otimes U_{2}-U_{2}\otimes 1)\displaystyle \sum_{-\infty}^{\infty}(U_2^{-1}\otimes U_2)^k =0 $.\\
\end{center}
After writing $x_1=\displaystyle \sum a_k Z^k$, where $(a_k)$ is a finitely supported over the elements of the closed subalgebra generated by $U_1\otimes 1, U_2\otimes 1, 1\otimes U_1$. We have 
\begin{center}
$x_1=\displaystyle \sum a_k(Z^k - 1) = \displaystyle \sum a_k(\displaystyle \sum_{0}^{k-1}Z^k)(Z-1)$.
\end{center}
It can be observed that for $y:=\displaystyle \sum b_k Z^k$ with $b_k=\displaystyle \sum_{j-1=k}^{\infty} a_j$ satisfies, $x_1=y(U_{2}\otimes 1 - \lambda\otimes U_{2})$. 
\par
Hence $y \in \mathcal A_\theta^{alg}$.
\end{proof}

\section{$\G$ invariant Hochschild homology group, $HH_\bullet((\mathcal A_{\theta,1}^{alg})^\G)$}
In this section we compute the Hochschild homology for the $g=1$ part appearing as one of the decomposed cases while we compute the Hochschild homology for the crossed product algebras $\mathcal A_\theta^{alg} \rtimes \G$. We calculate these groups for each of the four cases, corresponding to the groups $\mathbb Z_2, \mathbb Z_3, \mathbb Z_4, \mathbb Z_6$. \par
We notice that from Lemma 3.2 we can simplify the group as follows
\begin{center}
$HH_\bullet((\mathcal A_{\theta,1}^{alg})^\G)=(HH_\bullet(\mathcal A_{\theta,1}^{alg}))^\G$
\end{center}
BY the Kozul complex in Lemma 4.1, the Hochschild homology of $\mathcal A_\theta^{alg}$ is computed by the following complex
\begin{center}
 $0 \leftarrow \mathcal A^{alg}_\theta \xleftarrow{1 \otimes _{\mathfrak B^{alg}_\theta}b_1} \mathcal A^{alg}_\theta \displaystyle \bigoplus \mathcal A^{alg}_\theta \xleftarrow{1 \otimes_{\mathfrak B^{alg}_\theta}b_2} \mathcal A^{alg}_\theta$
\end{center}
We have 
\begin{center}
$H_2(\mathcal A_\theta^{alg}, \mathcal A_\theta^{alg}) = ker( 1 \otimes b_2)$,
\end{center}
where,
\begin{center}
 $(1 \otimes b_2)(a\otimes I)=a\otimes_{\mathfrak B_\theta^{alg}}(U_2\otimes I-\lambda\otimes U_2)\otimes e_1-a\otimes_{\mathfrak B_\theta^{alg}}(\lambda U_1\otimes I-I\otimes U_1)\otimes e_2= (aU_2-\lambda  U_2a,U_1 a-\lambda a U_1)$ .
\end{center}
\par It is clear that $H_2(\mathcal A_\theta^{alg}, \mathcal A_\theta^{alg})= \langle \varphi \in \mathcal A_\theta^{alg} | \varphi_{n,m-1}={\lambda}^{n+1}\varphi_{n,m-1}={\lambda}^{m}\varphi_{n,m-1} \rangle$. To solve the above condition on a cycle in $H_2(\mathcal A_\theta^{alg}, \mathcal A_\theta^{alg})$, we observe that we need to have $m=0$ for the second equality to hold and also we need that $n=-1$ for the first equality. Hence we see that $\varphi_{-1,-1}U_1^{-1}U_2^{-1}$, the elements of $\mathcal A_\theta^{alg}$ supported at $U_1^{-1}U_2^{-1}$ is the required generator for $H_2(\mathcal A_\theta^{alg}, \mathcal A_\theta^{alg})$.  \newline
To calculate the $\G$ invariant algebra we need to push this cycle $\varphi_{-1,-1}U_1^{-1}U_2^{-1}$ into the bar complex $C_\ast$ using the map $h_2: J_2(\mathcal A_\theta^{alg}) \to C_2(\mathcal A_\theta^{alg})$, where
\begin{center}
$h_2(1 \otimes (e_1 \wedge e_2)) = I \otimes U_2 \otimes U_1 - \lambda \otimes U_1 \otimes U_2$.
\end{center}
After considering the action of $t \in \G$ on the element $(1 \otimes h_2)(\varphi_{-1,-1}U_1^{-1}U_2^{-1})$. Then we pull back the element $t \cdot ((1 \otimes h_2)( \varphi_{-1,-1}U_1^{-1}U_2^{-1}))$ to the Kozul complex to check its invariance. Using this technique we have the following computations.
\subsection{$H_2(\mathcal A_\theta^{alg}, \mathcal A_\theta^{alg})^{\G}$}
To calculate $H_2(\mathcal A_\theta^{alg}, \mathcal A_\theta^{alg})^{\G}$ we need to push the 2-cocycles into the bar complex and thereafter pull it back after the action of $\mathbb Z_2$ on the class. Consider the map, $(1 \otimes b_2)$, we see that \newline
$(1\otimes h_2)(\varphi_{-1,-1}U_1^{-1}U_2^{-1})=\varphi_{-1,-1}(1\otimes h_2)(U_1^{-1}U_2^{-1})\newline=a_{-1,-1} (U_1^{-1}U_2^{-1}\otimes U_2 \otimes U_1 - \lambda U_1^{-1}U_2^{-1} \otimes U_1 \otimes U_2)$ \newline
After $t \in \mathbb Z_2$ acts on the element $a_{-1,-1} U_1^{-1}U_2^{-1}\otimes U_2 \otimes U_1 - \lambda U_1^{-1}U_2^{-1} \otimes U_1 \otimes U_2$ we get 
\begin{center}
$a_{-1,-1} (U_1U_2\otimes U_2^{-1} \otimes U_1^{-1} - \lambda U_1U_2 \otimes U_1^{-1} \otimes U_2^{-1})$
\end{center}
Now, we consider the element $(1 \otimes k_2)(a_{-1,-1} (U_1U_2\otimes U_2^{-1} \otimes U_1^{-1} - \lambda U_1U_2 \otimes U_1^{-1} \otimes U_2^{-1}))$, where for $\nu = (n_1,n_2)$ and $\mu = (m_1,m_2)$ the chain map $k_2: C_2(\mathcal A_\theta^{alg}) \to J_2(\mathcal A_\theta^{alg})$ is given by the following formula
\begin{center}
$k_2(I \otimes U^\nu \otimes U^\mu) = (U_1\otimes I)^{n_1}  \displaystyle \frac{{\lambda}^{n_2 m_1} ( U_1 \otimes I)^{m_1} - {\lambda}^{m_1 m_2}(I \otimes U_1)^{m_1}}{{\lambda}^{n_2}(U_1 \otimes I)-{\lambda}^{-m_2}(I \otimes U_1)}   \newline \displaystyle \frac{( U_2 \otimes I)^{n_2} -{\lambda}^{n_2}(I \otimes U_2)^{n_2}}{(U_2 \otimes I)-\lambda(I \otimes U_2 )} (I \otimes U_2)^{m_2} \otimes e_1 \wedge e_2.$
\end{center}
After $t \in \mathbb Z_2$ acts on $(1\otimes h_2)(\varphi_{-1,-1}U_1^{-1}U_2^{-1})$ , we consider the pull back of $ -1 \cdot (1\otimes h_2)(\varphi_{-1,-1}U_1^{-1}U_2^{-1})$ on the Kozul complex.
\begin{center}
$(1 \otimes k_2)a_{-1,-1} (U_1U_2\otimes U_2^{-1} \otimes U_1^{-1} - \lambda U_2U_1 \otimes U_1^{-1} \otimes U_2^{-1})\newline = a_{-1,-1}U_1U_2\displaystyle \frac{\lambda  (U_1 \otimes 1)^{-1}-(I \otimes U_1)^{-1}}{{\lambda}^{-1}(U_1 \otimes I) -(I \otimes U_1)} \displaystyle \frac{(U_2 \otimes I)^{-1}-{\lambda}^{-1}(I \otimes U_2)^{-1}}{(U_2 \otimes I) -{\lambda}(I \otimes U_2)}-0$.
\end{center}
To simply the relations above we notice that
\begin{center}
$\displaystyle \frac{\lambda  (U_1 \otimes 1)^{-1}-(I \otimes U_1)^{-1}}{{\lambda}^{-1}(U_1 \otimes I) -(I \otimes U_1)} = -\lambda (U_1^{-1} \otimes U_1^{-1})$ and 
\end{center}
\begin{center}
$\displaystyle \frac{(U_2 \otimes I)^{-1}-{\lambda}^{-1}(I \otimes U_2)^{-1}}{(U_2 \otimes I) -{\lambda}(I \otimes U_2)}= -{\lambda}^{-1}(U_2^{-1} \otimes U_2^{-1})$.
\end{center}
Hence we have \newline
$(1 \otimes k_2)a_{-1,-1} (U_1U_2\otimes U_2^{-1} \otimes U_1^{-1} - \lambda U_2U_1 \otimes U_1^{-1} \otimes U_2^{-1})\newline=  a_{-1,-1}U_1U_2  \cdot -\lambda (U_1^{-1} \otimes U_1^{-1}) \cdot  -{\lambda}^{-1}(U_2^{-1} \otimes U_2^{-1}) \newline=
a_{-1,-1}U_2 U_1^{-1} \cdot (U_2^{-1} \otimes U_2^{-1})=a_{-1,-1}U_2^{-1}U_2 U_1^{-1}U_2^{-1}=a_{-1,-1}U_1^{-1}U_2^{-1}$.

Hence we have, $(H_2(\mathcal A_\theta^{alg}, \mathcal A_\theta^{alg}))^{\mathbb Z_2} \cong \mathbb C$.

Similarly, we can compute the groups $(H_2(\mathcal A_\theta^{alg}, \mathcal A_\theta^{alg}))^{\G}$ for $\G = \mathbb Z_3, \mathbb Z_4, \mathbb Z_6$ and find them to be  isomorphic to $\mathbb C$.

\subsection{$H_1(\mathcal A_\theta^{alg}, \mathcal A_\theta^{alg})^\G$}
We compute this group in a similar way to our calculation of the group $(H_2(\mathcal A_\theta^{alg}, \mathcal A_\theta^{alg}))^{\G}$. We notice that we have the group $H_1(\mathcal A_\theta^{alg}, \mathcal A_\theta^{alg})$ equals
\begin{center}
$ker(1 \otimes b_1) / im( 1\otimes b_2)$.
\end{center}
$(1\otimes b_1)(a\otimes I\otimes e_j)=a\otimes_{\mathfrak B_\theta^{alg}}(I\otimes U_j-U_j\otimes I)= U_ja-aU_j$, Hence, \newline
$ker( 1\otimes b_1) = \langle (\varphi^1, \varphi^2) \in \mathcal A_\theta^{alg} \oplus \mathcal A_\theta^{alg}| U_1\varphi^1-\varphi^1U_1=\varphi^2U_2-U_2\varphi^2 \rangle$. \newline
While we know that
\begin{center}
$im(1 \otimes b_2)= \langle (\varphi^1,\varphi^2)\in  \mathcal A_\theta^{alg} \oplus \mathcal A_\theta^{alg}| \exists \varphi \in \mathcal A_\theta^{alg}; \varphi^1=\varphi U_2-\lambda U_2 \varphi \text{ and } \varphi^2 = U_1 \varphi -\lambda \varphi U_1 \rangle$ 
\end{center}
Hence we see that $H_1(\mathcal A_\theta^{alg} ,\mathcal A_\theta^{alg}) \cong \mathbb C^2$, and is generated by the equivalence classes of $\bar{\varphi^1_{-1,0}}$ and $\bar{\varphi^2_{0,-1}}$.\par
We need to check the invariance of $H_1(\mathcal A_\theta^{alg}, \mathcal A_\theta^{alg})$ under each of the finite subgroups to get the desired group $(H_1(\mathcal A_\theta^{alg}, \mathcal A_\theta^{alg}))^\G$.
\begin{thm}
$H_1(\mathcal A_\theta^{alg}, \mathcal A_\theta^{alg})^{\G} = 0$.
\end{thm}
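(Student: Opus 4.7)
The approach parallels the computation of $(HH_2)^{\G}$ in the preceding subsection. Having identified $H_1(\mathcal A_\theta^{alg}, \mathcal A_\theta^{alg}) \cong \mathbb C^2$ as spanned by the classes $\overline{\varphi^1_{-1,0}} = \overline{U_1^{-1} \otimes e_1}$ and $\overline{\varphi^2_{0,-1}} = \overline{U_2^{-1} \otimes e_2}$, the task reduces to computing the matrix of the induced action $g_* : H_1 \to H_1$ for a generator $g$ of each cyclic group $\G \in \{\mathbb Z_2, \mathbb Z_3, \mathbb Z_4, \mathbb Z_6\}$ and checking that $1 - g_*$ is invertible.

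First I would set up a degree-one analogue $h_1 : J_1(\mathcal A_\theta^{alg}) \to C_1(\mathcal A_\theta^{alg})$ of the chain map $h_2$ used above, namely $h_1(1 \otimes e_j) = 1 \otimes U_j$, together with its partner $k_1 : C_1 \to J_1$ obtained by the same geometric-series expansion that produced $k_2$. For each basis class and each $g$, I push the Koszul cycle into the bar complex via $h_1$, apply the $\G$-action using the formulas of Section 2, and pull the result back via $k_1$ to an explicit element of $\mathcal A_\theta^{alg} \oplus \mathcal A_\theta^{alg}$.

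The second and more laborious step is to reduce that pull-back modulo $\mathrm{im}(1 \otimes b_2) = \{(\varphi U_2 - \lambda U_2\varphi,\ U_1\varphi - \lambda\varphi U_1) : \varphi \in \mathcal A_\theta^{alg}\}$ and to read off the coefficients at the two generator classes. Here the diagrammatic device of Section 3 is indispensable: each boundary $(1 \otimes b_2)(U_1^{e}U_2^{f})$ connects the two lattice sites $(e, f+1)$ and $(e+1, f)$ in the two respective slots, and for irrational $\theta$ the accompanying scalars $1 - \lambda^{e+1}$ and $1 - \lambda^{f+1}$ are nonzero off the lines $e = -1$ and $f = -1$. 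An iterated application of these boundaries sweeps away every monomial in the $e_1$-slot except one at $(-1, 0)$, and similarly for the $e_2$-slot, producing the entries of the $2 \times 2$ matrix of $g_*$.

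The main obstacle will be the cases $\G = \mathbb Z_3$ and $\mathbb Z_6$, where the $g$-action sends $U_1$ and $U_2$ to longer monomials in both generators and the bookkeeping of the accumulated $\lambda$-phases through the push-action-pullback is most intricate. Nonetheless, the matrix $g_*$ is expected to coincide (up to a scalar twist) with the standard action of $g \in SL(2,\mathbb Z)$ on $\mathbb Z^2$. Because any nontrivial element of finite order in $SL(2,\mathbb Z)$ has characteristic polynomial a cyclotomic polynomial other than $x-1$ (otherwise $g$ would fix a primitive lattice vector and be conjugate to a nontrivial unipotent matrix of infinite order, contradicting finite order), the number $+1$ is never an eigenvalue of $g_*$. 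Hence $1 - g_*$ is invertible in all four cases and $H_1(\mathcal A_\theta^{alg}, \mathcal A_\theta^{alg})^{\G} = 0$.
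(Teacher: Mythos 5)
Your proposal is correct and uses the same core machinery as the paper: identify $H_1(\mathcal A_\theta^{alg},\mathcal A_\theta^{alg})\cong\mathbb C^2$ with generators $\overline{\varphi^1_{-1,0}}$, $\overline{\varphi^2_{0,-1}}$, push a cycle into the bar complex via $h_1(1\otimes e_j)=1\otimes U_j$, apply the group action there, and pull back via $k_1$ to read off the induced map $g_*$ on homology. Where you diverge is in how the four groups are dispatched. The paper carries out the pull-back explicitly only for $\G=\mathbb Z_2$ (finding $g_*=-\mathrm{id}$, hence no invariants) and asserts that the other cases are ``similar computations.'' You instead propose a uniform endgame: recognize $g_*$ as the standard two-dimensional representation of $g\in SL(2,\mathbb Z)$ and observe that the characteristic polynomial of any nontrivial finite-order element ($(x+1)^2$, $x^2+x+1$, $x^2+1$, $x^2-x+1$) never has $1$ as a root, so $\ker(1-g_*)=0$ in all four cases at once. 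This is cleaner and actually supplies the argument the paper omits. One caveat you should close: your hedge ``up to a scalar twist'' is not harmless, since twisting the order-$3$ representation with eigenvalues $\omega,\bar\omega$ by the scalar $\omega$ produces the eigenvalue $1$ and would destroy the conclusion. You must verify the twist is trivial; fortunately your own setup does this, because the class of $U_j^{-1}\otimes e_j$ maps to $U_j^{-1}\otimes U_j$ in the bar complex, $g$ sends this to $(gU_j)^{-1}\otimes(gU_j)$ in which the phase factors $e^{\pi i g_{1j}g_{2j}\theta}$ cancel, and the pull-back of $V^{-1}\otimes V$ for $V=U_1^aU_2^c$ decomposes in homology as $a\,\overline{\varphi^1_{-1,0}}+c\,\overline{\varphi^2_{0,-1}}$ with integer coefficients and no residual scalar. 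With that observation made explicit, your argument is complete.
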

\begin{proof}
For $\G = \mathbb Z_2$, to check the invariance we need to push the cycle to the bar complex, $C_\ast(\mathcal A_\theta^{alg})$ and then consider the natural action that exists on the bar complex. \newline
Using  the map $h_1: J_1(\mathcal A_\theta^{alg}) \to C_1(\mathcal A_\theta^{alg})$, 
\begin{center}
$h_1(I \otimes e_i) =I \otimes U_j$
\end{center}
we obtain $( 1\otimes h_1)(\varphi^1_{-1,0}U_1^{-1},0)= \varphi^1_{-1,0} U_1^{-1} \otimes U_1$, thereafter we consider the action of $ -1 \in \mathbb Z_2$. Hence we obtain the element
\begin{center}
$\varphi^1_{-1,0}U_1 \otimes U_1^{-1}$.
\end{center}
We now pull this transformed element on back to the Kozul complex for its comparison with the original cycle using the map 
\begin{center}
$k_1(I \otimes U^\nu) = A_\nu \otimes e_1 + B_\nu \otimes e_2$, where $\nu =(n_1, n_2)$.
\end{center}
Coefficient $A_\nu = (I \otimes U_2)^{n_2}((U_1 \otimes I)^{n_1}-(I \otimes U_1)^{n_1}) ((U_1 \otimes I)-(I \otimes U_1))^{-1}$ while the coefficient \newline $B_\nu = (I \otimes U_1)^{n_1}((U_2 \otimes I)^{n_2}-(I \otimes U_2)^{n_2}) ((U_2 \otimes I)-(I \otimes U_2))^{-1}$. These maps
 induce quasi-isomorphism, whence we have the push forwards and pull backs of cycles as cycles in their corresponding complexes.\par
Proceeding further with a general cycle $\varphi \in H_1(\mathcal A_\theta^{alg}, {}_{-1}\mathcal A_\theta^{alg})$, we can express it in terms of generators $\bar{\varphi^1_{-1,0}}$ and $\bar{\varphi^2_{0,-1}}$ as follows,
\begin{center}
$\varphi = aU_1^{-1}\otimes e_1 + b U_2^{-1} \otimes e_2$.
\end{center}
\begin{center}
$(1 \otimes k_1)(-1 \cdot(1 \otimes h_1)(\varphi)) = (1 \otimes k_1)(-1 \cdot (a U_1^{-1} \otimes U_1 + b U_2 \otimes U_2^{-1}))=(1 \otimes k_1) (a U_1\otimes U_1^{-1} + b U_2 \otimes U_2^{-1})=a U_1 A \otimes e_1 + a U_1 B \otimes e_2 + b U_2 A' \otimes e_1 + b U_2 B' \otimes e_2=(a U_1 A + b U_2 A') \otimes e_1 + (a U_1 B + b U_2 B') \otimes e_2$.
\end{center}
Let us calculate $U_1 A \in \mathcal A_\theta^{alg}$. We know from the formula described above that
\begin{center}
$A = \displaystyle \frac{(U_1 \otimes I)^{-1} - (I \otimes U_1)^{-1}}{(U_1 \otimes I) -(I \otimes U_1)}$.
\end{center}
We can simplify the formula for $A$, 
\begin{center}
$A= \displaystyle \frac{(U_1 \otimes I)^{-1} - (I \otimes U_1)^{-1}}{(U_1 \otimes I) -(I \otimes U_1)} = - U_1^{-1} \otimes U_1^{-1}$.
\end{center}
Hence using $\mathcal A_\theta^{alg}$ as a $\mathfrak B_\theta^{alg}$ module, we have $U_1 A = -U_1^{-1}$. Similarly we compute that $U_2 B'=-U_2^{-1}$. As for $A',B$; we see that from the formula above that $A' =B=0$, hence we have 
\begin{center}
$(1 \otimes k_1)(-1 \cdot(1 \otimes h_1)(aU_1^{-1}\otimes e_1 + b U_2^{-1} \otimes e_2)) =-(aU_1^{-1}\otimes e_1 + b U_2^{-1} \otimes e_2)$.
\end{center}
Whence,
\begin{center}
$(H_1(\mathcal A_\theta^{alg} , \mathcal A_\theta^{alg}))^{\mathbb Z_2} = 0$.
\end{center}
Through similar computation for $\G=\mathbb Z_3, \mathbb Z_4, \mathbb Z_6$, we get that, 
\begin{center}
$(H_1(\mathcal A_\theta^{alg} , \mathcal A_\theta^{alg}))^{\G} = 0$.
\end{center}
\end{proof}
\subsection{$H_0(\mathcal A_\theta^{alg}, \mathcal A_\theta^{alg})^\G$}
For the zeroth homology we have a simple calculation, through the fact that $k_0 =h_0 =id$. Hence the natural action on the bar complex translates into the Kozul complex without any changes to it. \par
We know that
\begin{center}
$H_0(\mathcal A_\theta^{alg}, \mathcal A_\theta^{alg}) = \mathcal A_\theta^{alg} /  \langle im(1 \otimes b_1) \rangle$
\end{center}
\begin{center}
$im(1 \otimes b_1)= \langle \varphi \in \mathcal A_\theta^{alg} | \varphi = U_1 \varphi^1 - \varphi^1 U_1 + U_2 \varphi^2 - \varphi^2 U_2 \rangle$
\end{center}
Since $U_1U_2 \in im(1 \otimes b_1)$ we have the group $H_0(\mathcal A_\theta^{alg} , \mathcal A_\theta^{alg})= \langle \bar{\varphi_{0,0}} \rangle$. It is clearly invariant under the $\mathbb Z_2$ action. Hence we have the following result
\begin{center}
$H_0(\mathcal A_\theta^{alg} , \mathcal A_\theta^{alg})^{\mathbb Z_2} \cong \mathbb C$.
\end{center}
In general we have that 
\begin{center}
$H_0(\mathcal A_\theta^{alg} , \mathcal A_\theta^{alg})^{\G} \cong \mathbb C$.
\end{center}
for $\G = \mathbb Z_2, \mathbb Z_3, \mathbb Z_4, \mathbb Z_6$.
\section{$\mathbb Z_2$ action on $\mathcal A_\theta^{alg}$}
\subsection{Hochschild homology}

\begin{thm}
$HH_0((\mathcal {A}_{\theta, -1}^{alg \bullet})^{\mathbb {Z}_2}) \cong \mathbb C^4$, while $HH_k((\mathcal {A}_{\theta, -1}^{alg \bullet})^{\mathbb {Z}_2})$ is a trivial group for all $k \geq 1$
\end{thm}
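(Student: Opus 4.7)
My plan is to compute $HH_\bullet(\mathcal{A}^{alg}_{\theta,-1})$ directly via a twisted version of the Koszul resolution of Lemma 5.1, and then take $\mathbb{Z}_2$-invariants using Lemma 3.2. Tensoring the resolution against $\mathcal{A}^{alg}_\theta$ with the right $\mathfrak{B}^{alg}_\theta$-action twisted by $t=-1$ (so that $U_j$ acts on the right as $U_j^{-1}$) replaces the differentials appearing in Section 6 by
\[
d_1^{-1}(a_1\otimes e_1+a_2\otimes e_2)=(U_1 a_1-a_1 U_1^{-1})+(U_2 a_2-a_2 U_2^{-1}),
\]
\[
d_2^{-1}(a)=\bigl(aU_2^{-1}-\lambda U_2 a,\; U_1 a-\lambda a U_1^{-1}\bigr).
\]
The resolution has length two, so $HH_k(\mathcal{A}^{alg}_{\theta,-1})=0$ for $k\geq 3$ is automatic.

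For $HH_2$, writing $a=\sum\varphi_{m,n}U_1^m U_2^n$, the vanishing of the first component of $d_2^{-1}(a)$ gives the recursion $\varphi_{m,n+1}=\lambda^{m+1}\varphi_{m,n-1}$. Since $\lambda^{m+1}\neq 0$, this forces any finitely supported solution to vanish, and so $HH_2(\mathcal{A}^{alg}_{\theta,-1})=0$. For $HH_1$, the cycle condition becomes
\[
\alpha_{m-1,n}-\lambda^{-n}\alpha_{m+1,n}+\lambda^{m}\beta_{m,n-1}-\beta_{m,n+1}=0 \quad \forall (m,n),
\]
the ``plus-shape'' relation treated by the diagrammatic method of Section 3. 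Working inward from the outermost lattice sites of the joint support of $(\alpha,\beta)$, one constructs a finitely supported primitive $\gamma$ for $d_2^{-1}$ term by term, realising every cycle as a boundary, so $HH_1(\mathcal{A}^{alg}_{\theta,-1})=0$.

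For $HH_0(\mathcal{A}^{alg}_{\theta,-1})=\mathcal{A}^{alg}_\theta/\operatorname{im}(d_1^{-1})$ the generating relations applied to a monomial are
\[
U_1\cdot U_1^m U_2^n-U_1^m U_2^n\cdot U_1^{-1}=U_1^{m+1}U_2^n-\lambda^{-n}U_1^{m-1}U_2^n,
\]
\[
U_2\cdot U_1^m U_2^n-U_1^m U_2^n\cdot U_2^{-1}=\lambda^{m}U_1^m U_2^{n+1}-U_1^m U_2^{n-1}.
\]
In the quotient each exponent can be shifted by $\pm 2$ at the cost of an invertible scalar, so every monomial collapses to one of the four representatives $\{1,\,U_1,\,U_2,\,U_1U_2\}$ indexed by $(\mathbb{Z}/2)^2$. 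Linear independence follows from a support argument on the lattice: each generator of $\operatorname{im}(d_1^{-1})$ has support contained in a single coset of $2\mathbb{Z}^2\subset\mathbb{Z}^2$, so no finite linear combination can identify representatives from distinct cosets. The $\mathbb{Z}_2$-action $U_j\mapsto U_j^{-1}$ preserves these cosets, and combining the two relations one verifies directly that $U_1^{-1}\equiv U_1$, $U_2^{-1}\equiv U_2$, and $U_1^{-1}U_2^{-1}\equiv U_1U_2$ in $HH_0$. All four classes are therefore invariant, and Lemma 3.2 yields $HH_0((\mathcal{A}^{alg}_{\theta,-1})^{\mathbb{Z}_2})\cong\mathbb{C}^4$.

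The main obstacle is the $HH_1$ step: the ``plus-shape'' recursion admits a finitely supported primitive only because the cycle condition, combined with the $HH_2$ vanishing interpreted as a consistency check on boundaries of the support, rules out the accumulation of nonzero data at infinity. The bookkeeping is delicate but is precisely what the diagram representation of Section 3 was designed to handle.
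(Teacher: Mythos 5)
Your proposal follows essentially the same route as the paper: reduce to the twisted Hochschild homology $H_\bullet(\mathcal{A}_\theta^{alg},{}_{-1}\mathcal{A}_\theta^{alg})$ via the length-two Koszul resolution of Lemma 5.1 (so vanishing in degrees $\geq 3$ is automatic), kill $H_2$ by the finite-support recursion, exhibit the four classes $1, U_1, U_2, U_1U_2$ and their $\mathbb{Z}_2$-invariance for $H_0$, and prove $H_1=0$ by the lattice-diagram induction on the support of a kernel solution. The one caveat is that your $H_1$ step is only a sketch of the column-by-column reduction that the paper carries out explicitly (the $\wedge$ operation and its termination in Lemmas 6.6--6.8), which is where essentially all of the work in this theorem lies.
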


\begin{lemma}
Consider the following chain complex $J_{\ast, -1}^{\mathbb Z_2}$
\begin{center}
$J_{\ast , -1}^{\mathbb Z_2} := 0 \xleftarrow{{}_{-1}b} (\mathcal A^{alg}_{\theta, -1})^{\mathbb Z_2} \xleftarrow{{}_{-1}b} ((\mathcal A^{alg}_{\theta, -1})^{\otimes 2})^{\mathbb Z_2} \xleftarrow{{}_{-1}b} ((\mathcal A^{alg}_{\theta, -1})^{\otimes 3})^{\mathbb Z_2}...$\\
\end{center}
where,
\begin{center}
${}_{-1}b(a_0\otimes a_1\otimes....\otimes a_n)= b'(a_0\otimes a_1\otimes....\otimes a_n)+(-1)^n((-1\cdot a_n)a_0\otimes a_1\otimes....\otimes a_{n-1}.)$\\
\end{center}
Then,
\begin{center}
$H_\bullet(J_{\ast , -1}^{\mathbb Z_2}, {}_{-1}b) = (H_\bullet(J_\ast(\mathcal A_\theta^{alg}, {}_{-1}\mathcal A_\theta^{alg})), b)^{\mathbb Z_2}$.
\end{center}
\end{lemma}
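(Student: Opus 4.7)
The plan is to realize this statement as a direct application of Lemma 3.2 with $A = \mathcal{A}_\theta^{alg}$, $\Gamma = \mathbb{Z}_2$, and differential $d_v = {}_{-1}b$. The only substantive task is to verify the single hypothesis of that lemma, namely that the diagonal $\mathbb{Z}_2$-action commutes with the twisted Hochschild differential ${}_{-1}b$. Once this is checked, the splitting of the complex via the averaging projection, which exists because $|\mathbb{Z}_2| = 2$ is invertible in $\mathbb{C}$, delivers the identification.

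First I would decompose ${}_{-1}b = b' + (-1)^n \tau_n$, where $b'$ is the truncated Hochschild differential and $\tau_n(a_0 \otimes a_1 \otimes \cdots \otimes a_n) = \bigl((-1)\cdot a_n\bigr) a_0 \otimes a_1 \otimes \cdots \otimes a_{n-1}$ is the twisted cyclic term. Equivariance of $b'$ under the diagonal action is immediate, since $\rho_g$ is an algebra automorphism and therefore respects both the tensor product and the internal multiplications appearing in $b'$. For $\tau_n$, applying $g \in \mathbb{Z}_2$ and using that $\mathbb{Z}_2$ is abelian so that $g\cdot ((-1)\cdot a) = (-1)\cdot (g\cdot a)$, one obtains $g \circ \tau_n = \tau_n \circ g$ on the diagonal tensor action.

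Second, with equivariance in hand, the averaging operator $P = \tfrac{1}{2}(1 + \sigma)$, where $\sigma$ denotes the action of the nontrivial element of $\mathbb{Z}_2$, is a chain map projecting each $(\mathcal{A}_\theta^{alg})^{\otimes (n+1)}$ onto its $\mathbb{Z}_2$-invariants. The complex splits as a direct sum $J_{\ast,-1} = J_{\ast,-1}^{\mathbb{Z}_2} \oplus \ker(P)$, and since homology commutes with finite direct sums, passing to invariants on homology matches the homology of the invariant subcomplex --- which is precisely the assertion of the lemma.

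The main obstacle is genuinely just the equivariance verification for $\tau_n$; there is no deeper content. One should, however, be attentive to the role of abelianness of $\Gamma$ --- the analogous verification for a non-abelian twist would require additional care --- and to the fact that the algebraic noncommutative torus $\mathcal{A}_\theta^{alg}$ consists of finitely supported elements, so no analytic convergence issues intrude in the manipulation of $P$.
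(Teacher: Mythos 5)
Your proposal is correct and follows essentially the same route as the paper: both reduce the claim to Lemma 3.2 once the diagonal $\mathbb Z_2$-action is seen to commute with ${}_{-1}b$ (your explicit check of equivariance of the twisted cyclic term, using that $\Gamma$ is abelian, and the averaging projection $P=\tfrac12(1+\sigma)$ are exactly the content of that lemma's ``easy exercise'' proof). The only step you should make explicit is the identification $H_\bullet(J_{\ast,-1},{}_{-1}b)\cong H_\bullet(J_\ast(\mathcal A_\theta^{alg},{}_{-1}\mathcal A_\theta^{alg}),b)$ appearing on the right-hand side, which the paper obtains by absorbing the twist $a_n\mapsto (-1)\cdot a_n$ into the bimodule structure of ${}_{-1}\mathcal A_\theta^{alg}$ (i.e.\ $a\cdot\alpha=((-1)\cdot a)\alpha$, $\alpha\cdot a=\alpha a$), so that ${}_{-1}b$ becomes the ordinary Hochschild differential with these twisted coefficients.
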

\begin{proof}
By considering ${}_{-1}\mathcal A_\theta^{alg}$ as a twisted $\mathcal A_\theta^{alg}$ module, we can integrate the twisted part $((-1\cdot a_n)a_0\otimes a_1\otimes....\otimes a_{n-1})$ in the module structure of ${}_{-1} \mathcal A_\theta^{alg}$. With this we have the following equality
\begin{center}
$H_\bullet(J_{\ast , -1}^{\mathbb Z_2}, {}_{-1}b) = H_\bullet((J_\ast(\mathcal A_\theta^{alg}, {}_{-1}\mathcal A_\theta^{alg})^{\mathbb Z_2}, b)$.
\end{center}

The map ${}_{-1}b$ has been modified into the regular Hochschild map, $b$ of a Hochschild complex $C(\mathcal A_\theta^{alg}, {}_{-1}\mathcal A_\theta^{alg})$, with twisted bimodule structure of ${}_{-1}\mathcal A_\theta$ that is as below
\begin{center}
$a{\alpha}=(-1\cdot a)\alpha \text{ and } \alpha a=\alpha a; a\in \mathcal A_\theta, \alpha \in {}_{-1}\mathcal A_\theta$.
\end{center}
Using Lemma 3.2 we can simplify $H_\bullet((J_\ast(\mathcal A_\theta^{alg}, {}_{-1}\mathcal A_\theta^{alg})^{\mathbb Z_2}, b)$.
\begin{center}
$H_\bullet((J_\ast(\mathcal A_\theta^{alg}, {}_{-1}\mathcal A_\theta^{alg})^{\mathbb Z_2}, b)=(H_\bullet(J_\ast(\mathcal A_\theta^{alg}, {}_{-1}\mathcal A_\theta^{alg}), b))^{\mathbb Z_2}$
\end{center}
\end{proof}

Hence using the adjusted Connes' complex for the algebraic case we can now calculate the homology groups.
\begin{center}
\begin{enumerate}
\item[$\bullet$]$H_0(\mathcal A_\theta^{alg},{}_{-1}\mathcal A^{alg}_\theta)= {}_{-1}\mathcal A^{alg}_\theta\otimes_{\mathfrak B_\theta^{alg}} \mathfrak B_\theta^{alg} / Image(1\otimes b_1),$
\item[$\bullet$]$H_1(\mathcal A_\theta^{alg},{}_{-1}\mathcal A^{alg}_\theta)= Ker(1\otimes b_1) / Image(1\otimes b_2),$
\item[$\bullet$]$H_2(\mathcal A_\theta^{alg},{}_{-1}\mathcal A^{alg}_\theta)= Ker(1\otimes b_2)$.
\end{enumerate}
\end{center}

\begin{lemma}$H_2(\mathcal A_\theta^{alg},{}_{-1}\mathcal A^{alg}_\theta) \cong 0.$
\end{lemma}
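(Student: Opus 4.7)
The plan is to use the Koszul resolution of Lemma 4.1 in the twisted module ${}_{-1}\mathcal A_\theta^{alg}$ and show directly that the kernel of $1 \otimes b_2$ is trivial. Concretely, since the left action of $x \in \mathcal A_\theta^{alg}$ on ${}_{-1}\mathcal A_\theta^{alg}$ is given by multiplication by $-1 \cdot x$ (where $-1$ is the generator of $\mathbb Z_2$ sending $U_i \mapsto U_i^{-1}$), the formula of Lemma 4.1 reads
\begin{equation*}
(1 \otimes b_2)(a) = \bigl(aU_2 - \lambda\, U_2^{-1} a,\; U_1^{-1} a - \lambda\, a U_1\bigr), \qquad a \in {}_{-1}\mathcal A_\theta^{alg}.
\end{equation*}
So I need to show that the only finitely supported $a$ annihilated by both components is $a = 0$.

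Next, I would expand $a = \sum_{n,m} a_{n,m} U_1^n U_2^m$ and compute each summand using the commutation $U_2^{-1} U_1^n = \lambda^{-n} U_1^n U_2^{-1}$ and $U_2^m U_1 = \lambda^{-m} U_1 U_2^m$. Collecting the coefficient of $U_1^n U_2^m$ in each component yields the system
\begin{equation*}
a_{n,m-1} = \lambda^{1-n}\, a_{n,m+1} \quad\text{and}\quad a_{n+1,m} = \lambda^{1-m}\, a_{n-1,m}, \qquad \text{for all } n,m \in \mathbb Z.
\end{equation*}
Both recursions relate coefficients of unbounded index (either $m$ going to $-\infty$ from the first, or $n$ going to $\pm\infty$ from the second) by a nonzero factor, so a single nonzero $a_{n,m}$ propagates to infinitely many nonzero coefficients.

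Finally, I would invoke finite support of $a \in \mathcal A_\theta^{alg}$: iterating the first recursion from any nonzero $a_{n, m_0}$ produces an infinite sequence $\{a_{n, m_0 - 2k}\}_{k \geq 0}$ of nonzero coefficients, contradicting finite support. Hence every $a_{n,m}$ vanishes and $\ker(1 \otimes b_2) = 0$, which gives $H_2(\mathcal A_\theta^{alg}, {}_{-1}\mathcal A_\theta^{alg}) \cong 0$.

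The only delicate step is bookkeeping of the twisted bimodule structure — in particular, making sure the left action is modified by the $\mathbb Z_2$-action while the right action is unchanged — but once this is set up correctly, the vanishing is a straightforward propagation-by-finite-support argument and no diagram manipulation is needed at this stage.
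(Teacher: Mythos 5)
Your proposal is correct and follows essentially the same route as the paper: identify the twisted Koszul differential $(1\otimes b_2)(a)=(aU_2-\lambda U_2^{-1}a,\,U_1^{-1}a-\lambda aU_1)$, extract the coefficient recursions relating $a_{n,m}$ to coefficients shifted by $2$ in an index, and conclude from finite support that no nonzero element can satisfy them. The only quibble is a sign in one exponent (with $U_2U_1=\lambda U_1U_2$ one gets $a_{n+1,m}=\lambda^{1+m}a_{n-1,m}$ rather than $\lambda^{1-m}$), which does not affect the propagation argument.
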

\begin{proof}
We consider the map $(1 \otimes b_2)$ in the tensor complex. To calculate the kernel of this map we have a closer look at the following map,
\begin{center}
$(1\otimes b_2)(a\otimes I)=a\otimes_{\mathfrak B_\theta^{alg}}(U_2\otimes I-\lambda\otimes U_2)\otimes e_1-a\otimes_{\mathfrak B_\theta^{alg}}(\lambda U_1\otimes I-I\otimes U_1)\otimes e_2$.
\end{center}
Using the twisted bimodule structure of ${}_{-1}\mathcal A_\theta^{alg}$ over $\mathcal A_\theta^{alg}$, the equation can be simplified to the following.
\begin{center}
$(1\otimes b_2)(a\otimes I)= (aU-\lambda  U_2^{-1}a,U_1^{-1}a-\lambda a U_1)$.
\end{center}
Hence we obtain the following relation over an element $(a \otimes 1)$ to reside in $ker( 1\otimes b_2)$. 
\begin{center}
$H_2(\mathcal A_\theta^{alg},{}_{-1}\mathcal A^{alg}_\theta)= \left\lbrace a\in {}_{-1}\mathcal A_\theta^{alg} | a_{n,m}={\lambda}^{m-1}a_{n-2,m}; a_{n-1,m}=\lambda^n a_{n-1,m-2}\right\rbrace$ .
\end{center}
Since, no such nontrivial element exists in ${}_{-1}\mathcal A_\theta^{alg}$ because if it did then all $a_{m,n}$ have to be same up to multiple of $\lambda$ but since the algebra consists of finitely supported elements so they are reduced to zero. Therefore, we have the desired result,
\begin{center}
$H_2(\mathcal A_\theta^{alg},{}_{-1}\mathcal A^{alg}_\theta)=0$.
\end{center}
\end{proof}
\begin{lemma}$H_0(\mathcal A_\theta^{alg},{}_{-1}\mathcal A^{alg}_\theta)^{\mathbb Z_2} \cong \mathbb C^4.$
\end{lemma}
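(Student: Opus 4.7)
The plan is to compute $H_0(\mathcal A_\theta^{alg}, {}_{-1}\mathcal A_\theta^{alg})$ explicitly from the Koszul resolution of Lemma 4.1 (with the twisted bimodule structure of Lemma 6.3), and then determine which classes are fixed by the induced $\mathbb Z_2$-action. A pleasant simplification at degree zero is that the quasi-isomorphisms satisfy $k_0 = h_0 = \text{id}$, so the induced action on $H_0$ coincides with the restriction of $\rho_{-1}$ that sends $U_i \mapsto U_i^{-1}$, without any further transport between complexes.

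First I would work out $\text{Image}(1 \otimes b_1)$. Under the twisted left action the boundary becomes $(1 \otimes b_1)(a \otimes I \otimes e_j) = a U_j - U_j^{-1} a$, and evaluated on a monomial $a = U_1^n U_2^m$ (using $U_2 U_1 = \lambda U_1 U_2$) this produces the two families of relations
\[
\lambda^m U_1^{n+1} U_2^m \sim U_1^{n-1} U_2^m, \qquad U_1^n U_2^{m+1} \sim \lambda^{-n} U_1^n U_2^{m-1},
\]
which allow one to shift either index by $2$ at the cost of a power of $\lambda$. Consequently every monomial is equivalent to a nonzero scalar multiple of exactly one of $1,\, U_1,\, U_2,\, U_1U_2$, according to the residues of $(n,m)$ modulo $2$. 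Since for irrational $\theta$ no power $\lambda^k$ equals $1$ for $k\ne 0$, no further collapse occurs, the four parity classes stay independent, and $H_0 \cong \mathbb C^4$.

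Next I would check $\mathbb Z_2$-invariance of each of the four generators. The class of $1$ is trivially fixed; applying the first relation at $n=m=0$ gives $U_1^{-1} \sim U_1$, and symmetrically the second relation at $n=m=0$ gives $U_2^{-1} \sim U_2$, both with trivial scalar. For $U_1U_2$ I would reduce $\rho_{-1}(U_1U_2) = U_1^{-1}U_2^{-1}$ back to $U_1U_2$ in two steps: the first relation at $(n,m) = (0,-1)$ gives $U_1 U_2^{-1} \sim \lambda \, U_1^{-1}U_2^{-1}$, and the second relation at $(n,m)=(1,0)$ gives $U_1 U_2 \sim \lambda^{-1}\, U_1U_2^{-1}$; the two factors $\lambda$ and $\lambda^{-1}$ cancel, so $[U_1 U_2] = [U_1^{-1}U_2^{-1}]$. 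Invoking Lemma 3.2 then yields $H_0(\mathcal A_\theta^{alg},{}_{-1}\mathcal A_\theta^{alg})^{\mathbb Z_2} \cong \mathbb C^4$.

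The delicate step is this last cancellation. If the combined scalar were some $\lambda^k$ with $k \ne 0$, the identity $[U_1^{-1}U_2^{-1}] = \lambda^k [U_1 U_2]$ would force $\rho_{-1}$ to act as multiplication by $\lambda^k \neq 1$ on the line spanned by $[U_1 U_2]$, killing its contribution to the invariants (for irrational $\theta$) and cutting the final dimension to $3$. The cancellation is ultimately forced by $(\rho_{-1})^2 = \text{id}$, but verifying it by the explicit reduction above — carefully tracking signs and exponents of $\lambda$ across both relation families — is the main computational content of the proof.
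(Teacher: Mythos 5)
Your proposal is correct and follows essentially the same route as the paper: compute $\mathrm{Image}(1\otimes b_1)$ under the twisted bimodule structure, obtain the index-shift-by-two relations, identify the quotient with the span of the four parity classes $1, U_1, U_2, U_1U_2$, and use $h_0=k_0=\mathrm{id}$ to transport the $\mathbb Z_2$-action directly to the Koszul complex. Your explicit verification that the scalars cancel in $[U_1^{-1}U_2^{-1}]=[U_1U_2]$ is a welcome refinement of a step the paper merely asserts.
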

\begin{proof}
We have the map $(1 \otimes b_1)$ in the tensor complex defined below.
\begin{center}
$(1\otimes b_1)(a\otimes I\otimes e_j)=a\otimes_{\mathfrak B_\theta^{alg}}(I\otimes U_j-U_j\otimes I)= U_j^{-1}a-aU_j$.
\end{center}
As before if we use the twisted bicomplex structure of ${}_{-1}\mathcal A_\theta^{alg}$ over the algebra $\mathcal A_\theta^{alg}$, the map$(1 \otimes b_1)$ can be simplified as follows,
\begin{center}
$b_1(a_1,0)= a_1U_1-U_1^{-1}a_1 \text{ and } b_1(0,a_2)=a_2U_2- U_2^{-1}a_2$.
\end{center}
Observe that  $b_1(a_1,a_2)= b_1(a_1,0)+b_1(0,a_2)$. Further we can simplify the calculation by considering only elements of the type $b_1(U_1^nU_2^m,0)$, and similarly for $b_1(0,a_2)$ we can consider the elements of the type $b_1(0,U_1^nU_2^m)$.
We observe that 
\begin{center}
$b_1(U_1^nU_2^m,0)= U_1^{-1}({U_1}^n{U_2}^m)-({U_1}^n{U_2}^m)U_1=U_1^{n-1}U_2^m(1-{\lambda}^{-m}U_1^2)$,
\end{center}
and
\begin{center}
$b_1(0,U_1^nU_2^{m-1})= U_2^{-1}({U_1}^n{U_2}^{m-1})-({U_1}^n{U_2}^m)= {\lambda}^{-1}U_1^nU_2^{m-2}-U_1^nU_2^m=({\lambda}^{-n}-U_2^2)U_1^nU_2^{m-2}$.
\end{center}
Let us consider the case $m=n=1$. In this case we can see that the elements $U_j$ and $U_j^{-1}$ are linearly related in $H_0(\mathcal A_\theta^{alg}, {}_{-1}\mathcal A_\theta^{alg})$.
\begin{center}
$b_1(U_1U_2,0)= U_2(1-\lambda^{-1}U_1^2)$ and  $b_1(0,U_1U_2)=(\lambda^{-1}-U_2^2)U_1$.\\
\end{center}
Hence we can conclude that 
\begin{center}
$H_0(\mathcal A_\theta^{alg},{}_{-1}\mathcal A^{alg}_\theta)= {}_{-1}\mathcal A^{alg}_\theta / \langle(U_1^{n-1}U_2^m(1-{\lambda}^{-m}U_1^2),({\lambda}^{-n}-U_2^2)U_1^nU_2^{m-2} \rangle $.
\end{center}
Now to obtain $(H_0(\mathcal A_\theta^{alg}, {}_{-1}\mathcal A_\theta^{alg}))^{\mathbb Z_2}$  we consider complex map $h: J_\ast \to C_\ast$, since the map $h_0 : J_0(\mathcal A_\theta^{alg}) \to C_0(\mathcal A_\theta^{alg})$ is the identity map, the $\mathbb Z_2$ action on the bar complex is translated to the Kozul complex with no alteration. Hence we get that
\begin{center}
$HH_0((\mathcal {A}_{\theta, -1}^{alg \bullet})^{\mathbb {Z}_2})= \langle \bar{a_{0,0}}, \bar{a_{1,0}}, \bar{a_{0,1}} , \bar{a_{1,1}} \rangle$\\.
\end{center}
\end{proof}

\begin{lemma}$H_1(\mathcal A_\theta^{alg},{}_{-1}\mathcal A^{alg}_\theta) \cong 0$.
\end{lemma}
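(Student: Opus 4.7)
The plan is to show every 1-cycle is a 1-boundary, so that $\ker(1\otimes b_1) \subseteq \mathrm{im}(1\otimes b_2)$. Writing $a_i = \sum_{n,m} \alpha^{(i)}_{n,m} U_1^n U_2^m$ for $i=1,2$, the cycle condition $U_1^{-1} a_1 - a_1 U_1 + U_2^{-1} a_2 - a_2 U_2 = 0$ translates, at each lattice point $(n,m)\in\mathbb Z^2$, into the coefficient equation
\[
\alpha^{(1)}_{n+1,m} - \lambda^{m}\alpha^{(1)}_{n-1,m} + \lambda^{-n}\alpha^{(2)}_{n,m+1} - \alpha^{(2)}_{n,m-1} \;=\; 0.
\]
Writing $a = \sum \gamma_{p,q} U_1^p U_2^q$, the image formula $(a_1,a_2)=(1\otimes b_2)(a)$ unfolds into the two families of recurrences
\[
\alpha^{(1)}_{n,m} = \gamma_{n,m-1} - \lambda^{1-n}\gamma_{n,m+1}, \qquad \alpha^{(2)}_{n,m} = \gamma_{n+1,m} - \lambda^{m+1}\gamma_{n-1,m}.
\]
Following the diagrammatic convention of Section 3, I would picture $\alpha^{(1)}$ as attached to vertical edges and $\alpha^{(2)}$ to horizontal edges of $\mathbb Z^2$, with the cycle relation a local cross-constraint at each vertex.

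The construction of $\gamma$ would proceed row-by-row. For fixed $n$, the first family is a two-step scalar recurrence in $m$ for $\gamma_{n,*}$, splitting into decoupled sub-recurrences by the parity of $m$ and driven by the finitely supported sequence $\alpha^{(1)}_{n,*}$. Setting $\gamma_{n,m}=0$ for $m$ sufficiently large in both parities and iterating backwards via $\gamma_{n,m-1} = \alpha^{(1)}_{n,m} + \lambda^{1-n}\gamma_{n,m+1}$ yields a unique candidate $\gamma_{n,*}$. Since $\theta\notin\mathbb Q$, the multiplier $\lambda^{n-1}$ is never a root of unity for $n\neq 1$, so the recurrence produces a non-vanishing tail at the opposite end only when a single linear functional of $\alpha^{(1)}_{n,*}$ per parity fails to vanish. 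These closing conditions turn out to be weighted partial sums of the cycle equations along the $n$th row, and hence are automatic. Since only finitely many rows carry non-zero data, the assembled $\gamma$ lies in $\mathcal A_\theta^{alg}$.

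The main obstacle is verifying that this row-wise $\gamma$ also satisfies the horizontal family for $\alpha^{(2)}$. Here the cycle condition plays its decisive role: at every lattice point it is precisely the compatibility statement that the $n$-difference of the vertical recurrence equals the $m$-difference of the horizontal one, so the two integrations produce the same potential $\gamma$. Diagrammatically, each cycle equation closes a $2\times 2$ plaquette on $\mathbb Z^2$; the specific shift factors $\lambda^{1-n}$ and $\lambda^{m+1}$ arising from the $-1$ twist are what suppress the ``monodromy'' classes responsible for the non-vanishing $H_1(\mathcal A_\theta^{alg},\mathcal A_\theta^{alg})\cong\mathbb C^2$ of the untwisted theory, and conclude $H_1(\mathcal A_\theta^{alg},{}_{-1}\mathcal A_\theta^{alg})=0$.
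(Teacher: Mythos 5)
Your proposal is correct, and it proves the lemma by a genuinely different route from the paper. The paper argues by induction on the number of non-zero entries of a kernel solution: it locates the bottom point of the left-most non-zero column, subtracts the $b_2$-image of a delta-supported element (the map $\wedge$), checks case by case that the result is still a kernel solution, and shows the iteration terminates. You instead construct the potential $\gamma$ directly, column by column, by solving the two-step recurrence $\gamma_{n,m-1}=\alpha^{(1)}_{n,m}+\lambda^{1-n}\gamma_{n,m+1}$ backwards from $\gamma_{n,m}=0$ for $m\gg0$; this yields the explicit formula $\gamma_{n,m-1}=\sum_{k\ge0}\lambda^{(1-n)k}\alpha^{(1)}_{n,m+2k}$, and finite support of $\gamma$ is equivalent to the vanishing of one linear functional $F_\epsilon(n)$ per column and parity. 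Your consistency check for the $\alpha^{(2)}$-family is exactly right: applying the difference operator $\beta\mapsto\beta_{n,m-1}-\lambda^{-n}\beta_{n,m+1}$ to $\gamma_{n+1,\cdot}-\lambda^{\cdot+1}\gamma_{n-1,\cdot}$ and to $\alpha^{(2)}_{n,\cdot}$ gives the same answer by the cycle equation, and a finitely supported solution of the homogeneous recurrence is zero. Two small imprecisions worth fixing: the closing condition $F_\epsilon(n)=0$ is not a weighted sum of cycle equations along the $n$th column alone — summing the cycle equations along column $n$ with weights $\lambda^{-n(m-m_\epsilon)/2}$ telescopes the $\alpha^{(2)}$ side to zero and yields $F_\epsilon(n+1)=\lambda^{m_\epsilon}F_\epsilon(n-1)$, which must then be chained from outside the (finite) support to conclude $F_\epsilon\equiv0$; and the remark about $\lambda^{n-1}$ not being a root of unity is unnecessary here, since in the finitely supported setting only $|\lambda^{1-n}|\neq0$ is used (roots of unity would matter for the smooth, rapidly decreasing version, not the algebraic one). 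With those points made explicit your argument is complete, arguably tighter than the paper's termination argument, and it isolates cleanly why the twist kills the two monodromy classes that survive in the untwisted $H_1$.
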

\begin{proof}
 From the previous calculations we do have an explicit formula for the kernel and the image equations of $H_1(\mathcal A_\theta^{alg}, {}_{-1}\mathcal A_\theta^{alg})$.
\begin{center}
$a^1_{n,m-1}-a^2_{n-1,m-2}={\lambda}^{m-1}a^1_{n-2,m-1}-{\lambda}^{1-n}a^2_{n-1,m}$ ( Kernel condition )
\end{center}
\begin{center}
$a^1_{n,m-1}=a_{n.m-2}-{\lambda}^{1-n}a_{n,m} ; a^2_{n-1,m-1}=a_{n,m-1}-{\lambda}^{m}a_{n-2,m-1}$( Image of $a\in \mathcal A^{alg}_\theta)$
\end{center}
We introduce a combinatorial method of plotting these equations on the $\mathbb Z^2$ plane to study their solutions.
\begin{center}
For two elements $a ,b$ in the lattice plane $\mathbb Z^2$ are said to be \emph{kernel-connected} and drawn like\\
\begin{center}
\begin{tikzpicture}
\draw (-0.2,0) -- (1.2,0) node(xline)[right] {$b$};
\draw (1.2,0) -- (-0.2,0) node(xline)[left] {$a$};
\fill (canvas cs:x=-0.2cm,y=0cm) circle (2pt);
\fill (canvas cs:x=1.2cm,y=0cm) circle (2pt);
\end{tikzpicture}
\end{center}
if there exists a kernel equation containing $a$ and $b$.
\end{center}
For example consider the following kernel equation.
\begin{center}
$a^1_{6,1}-a^2_{5,0}={\lambda}a^1_{4,1}-{\lambda}^{-5}a^2_{5,2}$.\\
\end{center}
Then, we have the diagram below as the corresponding \emph{kernel-diagram}, with the boxes representing elements of $a^1_{\bullet, \bullet}$ and the filled circles are elements of $a^2_{\bullet, \bullet}$.
\begin{center}
\begin{tikzpicture}
\draw (-1,0)node(xline)[left] {$a^1_{4,1}$} -- (1,0 )node(xline)[right] {$a^1_{6,1}$};
\draw[draw=white,double=black,very thick] (0,-1)node(yline)[below] {$a^2_{5,0}$} -- (0,1)node(yline)[above] {$a^2_{5,2}$};
\fill (canvas cs:x=0cm,y=1cm) circle (2pt);
\fill (canvas cs:x=0cm,y=-1cm) circle (2pt);
\node at (-1,0) [transition]{};
\node at (1,0) [transition]{};
\end{tikzpicture}
\end{center}
\par For a given kernel solution $S$, we can draw all kernel diagrams for kernel equations containing its non-zero elements. This way we will have $Dgm(S)$ as a lattice diagram in which the non-zero lattice points of $S$ are connected to one another by line segments on account of their relations to other points through kernel equations. \par
\par
We say that two non-zero points are \emph{primarily kernel connected} if there exists a kernel equation containing both of them. A kernel diagram $Dgm(\varphi)$ is \emph{connected} if given any two points $f,g \in Dgm(\varphi)$, there exists points $(h_i)_{i=0}^{n} \in Dgm(\varphi)$ such that any two adjacent points in the sequence $f=h_0,h_1,...,h_n=g$ are primarily connected.  \par
We notice that for a given kernel element $\varphi = (\varphi^1, \varphi^2)$, at the point $(n,m)$ we have the element $(\varphi^1_{n,m}, \varphi^2_{n,m})$. Hence we see that in a connected component we have three possibility at a point $(n,m)$. These are 
\begin{enumerate}
\item $\varphi^1_{n,m}$
\item $\varphi^2_{n,m} $
\item 0.
\end{enumerate}
Hence we conclude that the kernel diagram of $\varphi$, $Dgm(\varphi)$ is a subset of  $\mathbb Z^2 \oplus \mathbb Z^2 \oplus \mathbb Z^2$. If we show that each of these connected component is a boundary then we are done proving our lemma. Hence, we shall concentrate on any connected component say, $\pi_i(Dgm(\varphi))$ and call it the diagram of the kernel element. This assumption causes no loss in the generality of the cases we need to consider. \par
\begin{lemma}
All kernel solutions are disjoint unions of closed graphs with no open edges as drawn below.
\begin{center}
\begin{tikzpicture}
\draw (-0.2,0) -- (1.2,0) node(xline)[right] {$b$};
\fill (canvas cs:x=1.2cm,y=0cm) circle (2pt);
\draw (6,-0.2) -- (6,1.2) node(yline)[above] {$b$};
\node at (6,1.2) [transition]{};
\end{tikzpicture}
\end{center} 
\end{lemma}
\begin{proof}

If such point \emph{b} was to exist in any of the kernel diagrams, which is \emph{kernel-connected} to a non-zero lattice point $b=w_{r,s}$ in the manner pictured above, then consider the kernel equation that contains this non-zero point $w_{r,s}$ and the other three zero points located at $(r+2,s),(r+1,s+1) \text{ and } (r+1,s-1)$. We derive a contradiction on the kernel condition.
\end{proof} \par
Now, the proof will proceed with induction over the \emph{number} of non-zero elements in a given kernel solution.\par
After going through a simple yet tedious process, one figures out that there are no kernel solution with the number of  non-zero entries less than or equal to 3. This can be seen as follows: \par
Consider three non-zero points $a^2_{n_1,m_1}, a^2_{n_2,m_2} ,a^1_{n_3,m_3}$ constituting a kernel solution. It is easy to check that $m_3 \neq m_1$. Let $m_3 < m_1$, then consider the following equation at $(n,m)=(n_1+1,m_1+2)$,
\begin{center}
$a^1_{n,m-1}-a^2_{n-1,m-2}={\lambda}^{m-1}a^1_{n-2,m-1}-{\lambda}^{1-n}a^2_{n-1,m}$.
\end{center}
We derive that for $a^2_{n_1,m_1}\neq 0$, it is imperative that $n_2=n_1$ and $m_2 = m_1+2$. Again consider the above equation at $(n,m)=(n_1+1,m_1+4)$. We find that $a^2_{n_2,m_2}=0$. This is a contradiction. \par

 For simplicity assume that $A_4$, a solution containing 4 non-zero points be centered at $(0,0)$. Now we study the four equations which contain these points. Hence we obtain the following relation over its non-zero points,
\begin{center}
$b=-\lambda a = -\lambda c = d$.
\end{center} The above relations mean that now $A_4$ can be written in terms of $Image(b_2)$ and the following diagram illustrates this.
\begin{center}
\begin{tikzpicture}[->]
\draw (-1,-1) -- (1,-1)node[midway,below]{$-\lambda a$}-- (1,1)node[midway,right]{$-\lambda a$}--(-1,1)node[midway,above]{$a$}-- (-1,-1)node[midway,left]{$a$};
\node at (0,1) [transition]{};
\node at (0,-1) [transition]{};
\fill (canvas cs:x=1cm,y=0cm) circle (2pt);
\fill (canvas cs:x=-1cm,y=0cm) circle (2pt);
\draw (-3,0)node{$b_2(a')= A_4 =$};  
\end{tikzpicture},
\end{center}
where $a'_{0,0} =a$ and $a'_{r,s} =0$ for $(r,s) \neq (0,0)$. \par
Assume that all kernel solutions having the number of  non-zero elements less than or equal to $(x-1)$ come from image. Then consider a kernel solution $S_0$ with $x$ non-zero elements in it. Since this solution is finitely supported over the lattice plane, there exists a closed square region $\beta$  over which $S_0$ is supported.

Inside $\beta$ consider the left most column at least one point of which is non-zero. Choose the bottom point $\mu$ of this column. It is clear that $\mu = a^2_{r,s}$ for some $(r,s) \in \mathbb Z^2$. As if it were $a^1_{r,s}$ then consider the following kernel equation,
\begin{center}
$a^1_{r,s}-a^2_{r-1,s-1}={\lambda}^{s}a^1_{r-2,s}-{\lambda}^{1-r}a^2_{r-1,s+1}$ .
\end{center}
All but $a^1_{r,s}$ are zero. This is a contradiction.\par
We shall now construct a new solution $S_1$ from $S_0$, with the number of  non-zero elements in $S_1$ at most equal to $x$. Consider the following map $\wedge$
\begin{center}
$\wedge: \mathbb Z^2 \to \mathbb Z^2 , \text{ such that }$
\end{center}
\begin{center}
$(a^1_{r+1,{s-1}})^\wedge= 0, (a^2_{r,s})^\wedge= 0, (a^1_{{r+1},s+1})^\wedge=a^1_{{r+1},s+1} - a^2_{r,s}, \newline (a^2_{{r+2},{s}})^\wedge= a^2_{{r+2},{s}} +{\lambda}^{s+1} a^2_{r,{s}}  \text{ and } (a^j_{p,q})^\wedge= a^j_{p,q}$ for all other lattice points. 
\end{center}

\begin{lemma}Let $S_1:=\wedge(S_0)$. Then $S_1$ is a kernel solution such that $|S_1| \leq |S_0|$. If for some $u \in  {}_{-1}\mathcal A_\theta^{alg}$, $b_2(u)=S_1$ then $b_2(u')=S_0$, where $u' = u+ {}_{r,s}g$,
\begin{center}
${}_{r,s}g_{p,q}=\begin{cases}
a^2_{r,s} &\text{ if } (p,q)=(r+1,s)\\
0 & else. \end{cases}$
\end{center}
\end{lemma}
\begin{proof}
Second part is clear that the cardinality of non-zero entries is reduced by 2 when it is imposed that $(a^1_{r,s+2})^\wedge=(a^2_{r,s})^\wedge=0$ but then the cardinality may increase by two in case both $a^1_{r+1,s} \text{ and } a^2_{r+1,s}$ are zero in $S_0$.\par
To check that $S_1$ is a solution it is enough to check that all the kernel equations containing any of these four altered elements hold.
In other words we want to check that the following equation 
\begin{center}
$(a^1_{n,m})^\wedge-(a^2_{n-1,m-1})^\wedge-{\lambda}^{m}(a^1_{n-2,m})^\wedge+{\lambda}^{1-n}(a^2_{n-1,m+1})^\wedge=0$ 
\end{center}
holds for $(n,m)=(r+3,s+1), (r+1,s+1), (r+1,s-1), \text{ and } (r+3,s-1)$. \newline
\underline{Case 1: $(n,m)=(r+1,s+1)$} \newline
$(a^1_{r+1,s+1})^\wedge-(a^2_{r,s})^\wedge-{\lambda}^{s+1}(a^1_{r-1,s+1})^\wedge+{\lambda}^{-r}(a^2_{r,s+2})^\wedge=0 \newline
\implies a^1_{r+1,s+1}-a^2_{r,s}-0-{\lambda}^{s+1}a^1_{r-1,s+1}+{\lambda}^{-r}a^2_{r,s+2}=0$
This holds as in $S_0$ .\newline
\underline{Case 2 : $(n,m)=(r+1,s-1)$} \newline
$(a^1_{r+1,s-1})^\wedge-(a^2_{r,s-2})^\wedge-{\lambda}^{s-1}(a^1_{r-1,s-1})^\wedge+{\lambda}^{-r}(a^2_{r,s})^\wedge=0 \newline
\implies 0-0-0+0=0$ .\newline
\underline{Case 3 : $(n,m)=(r+3,s-1)$} \newline
$(a^1_{r+3,s-1})^\wedge-(a^2_{r+2,s-2})^\wedge-{\lambda}^{s-1}(a^1_{r+1,s-1})^\wedge+{\lambda}^{-2-r}(a^2_{r+2,s})^\wedge=0 \newline
\implies a^1_{r+3,s-1}-a^2_{r+2,s-2}-0+{\lambda}^{-2-r}(a^2_{r+2,s}+{\lambda}^{s+1}a^2_{r,s})=0 \newline
\implies a^1_{r+3,s-1}-a^2_{r+2,s-2}+{\lambda}^{-2-r}(a^2_{r+2,s}+{\lambda}^{s+1}a^2_{r,s})=0$. \newline
An easy exercise on $S_0$ reveals that $a^2_{r,s} = {\lambda}^{r} a^1_{r+1,s-1}$, hence we get the required relation. \newline
\underline{Case 4 : $(n,m)=(r+3,s+1)$} \newline
$(a^1_{r+3,s+1})^\wedge-(a^2_{r+2,s})^\wedge-{\lambda}^{s+1}(a^1_{r+1,s+1})^\wedge+{\lambda}^{-2-r}(a^2_{r+2,s+2})^\wedge=0 \newline
\implies a^1_{r+3,s+1}-a^2_{r+2,s}-{\lambda}^{s+1}a^2_{r,s}-{\lambda}^{s+1}(a^1_{r+1,s+1}-a^2_{r,s})+{\lambda}^{-2-r}a^2_{r+2,s+2}=0 \newline
\implies a^1_{r+3,s+1}-a^2_{r+2,s}-{\lambda}^{s+1}a^1_{r+1,s+1}+{\lambda}^{-2-r}a^2_{r+2,s+2}=0$. \newline
Above relation is indeed satisfied in $S_0$. \par
It is easy to see that like kernel diagram we can construct image diagrams. These diagrams are obtained by looking at how $b_2$-image of a non-zero lattice point look like as an element of $\mathcal A_\theta^{alg} \oplus \mathcal A_\theta^{alg}$. Given an element at $(0,0)$, its image are the four elements, two among $a^1$ and other two $a^2$. The smallest image diagram is the diagram of the kernel solution $A_4$, as described in previous pages. \par
Hence for each non-zero lattice element we have its image diagram similar to $A_4$, and hence for an element $a \in \mathcal A_\theta^{alg}$ we have its image diagram obtained by superimposing the image diagrams for each lattice points. For $H_1(\mathcal A_\theta^{alg}, {}_{-1}\mathcal A_\theta^{alg})$ to be zero it is a necessary condition that for each kernel element $\varphi=(\varphi^1, \varphi^2)$ there exists an image diagram similar to the construction of $Dgm(\varphi)$. Our induction process explicitly constructs the image candidate itself. \par
With this pictorial realization, it is clear from the diagram as well as from the explicit equations that  if we verify that
\begin{center}
${b_2(u')}_{p,q} = (a^1_{p,q},a^2_{p,q}) \text{ for } (p,q) = (r+1,s+1), (r,s), (r+1,s-1) \text{ and } (r+2,s)$
\end{center}
then we have proved that $b_2(u') = S_0$. \par
Observe that \newline
$(b_2(u'))_{r+1,s+1}=(b_2(u+{}_{rs}g))_{r+1,s+1}\newline=(b_2(u))_{r+1,s+1} + (b_2({}_{rs}g))_{r+1,s+1} = ((a^1_{r+1,s+1})^\wedge,(a^2_{r+1,s+1})^\wedge)+(a^2_{r,s},0) = (a^1_{r+1,s+1}, a^2_{r+1,s+1})$ ,\newline
$(b_2(u'))_{r,s}=(b_2(u+{}_{rs}g))_{r,s}\newline=(b_2(u))_{r,s} + (b_2({}_{rs}g))_{r,s} = ((a^1_{r,s})^\wedge,(a^2_{r,s})^\wedge)+(0,a^2_{r,s}) = (a^1_{r,s}, a^2_{r,s})$, \newline
$(b_2(u'))_{r+1,s-1}=(b_2(u+{}_{rs}g))_{r+1,s-1}\newline=(b_2(u))_{r+1,s-1} + (b_2({}_{rs}g))_{r+1,s-1} = ((a^1_{r+1,s-1})^\wedge,(a^2_{r+1,s-1})^\wedge)+(-{\lambda}^r a^2_{r,s},0) = (a^1_{r+1,s-1}, a^2_{r+1,s-1})$ \newline
and \newline
$(b_2(u'))_{r+2,s}=(b_2(u+{}_{rs}g))_{r+2,s}\newline=(b_2(u))_{r+2,s} + (b_2({}_{rs}g))_{r+2,s} = ((a^1_{r+2,s})^\wedge,(a^1_{r+2,s})^\wedge)+(0,-\lambda^{s+1}a^2_{r,s}) = (a^1_{r+2,s}, a^2_{r+2,s})$.
\end{proof}

\begin{lemma}
 $\wedge^N(S_0) = 0$ for any solution $S_0$ for a sufficiently large number $N$.
\end{lemma}
\begin{proof}
The lemma states that $|S_1| \leq |S_0|$ is strict after sufficiently many iterations. Notice that the above process reduces the number of non-zero entries in the left most column by 2. And when this process is iterated this will lead to shifting of the left most non-zero column rightwards. Pictorially if it is assumed that at no stage the iteration reduces the number of non-zero elements, then these many elements are compressed by the shifting of the left most non-zero column towards right by this process as the right edge of $\beta$ is not changed by this process unless all the non-zero elements are collected on this right edge of $\beta$. \par
Hence, all the non-zero elements of $\wedge^d(S_0)$ will at some stage be bounded to the right edge of $\beta$. But, then, consider the lowest element of $\wedge^d(S_0)$ on this edge. It has to be only $a^2_{w,t}$ for some $(w,t)$. Consider the following kernel equation. Every term in the equation
\begin{center}
$a^1_{w+1,t-1}-a^2_{w,t-2}-{\lambda}^{t-1}a^1_{w-1,t-1}+{\lambda}^{-w}a^2_{w,t}=0$ 
\end{center}
has all but $a^2_{w,t}$ equal zero, which is a contradiction.

Hence the induction process is complete.
\end{proof}



\begin{thm}
For $\mathbb Z_2$ action on $\mathcal A_\theta^{alg}$ we have the Hochschild homology groups as follows:
\begin{center}
$HH_0(\mathcal A_\theta^{alg} \rtimes \mathbb Z_2) \cong \mathbb C^5,
HH_1(\mathcal A_\theta^{alg} \rtimes \mathbb Z_2) \cong 0,
HH_2(\mathcal A_\theta^{alg} \rtimes \mathbb Z_2) \cong \mathbb C.$
\end{center}
\end{thm} 
\begin{proof}
We already know that $HH_0(\mathcal A_\theta^{alg})^{\mathbb Z_2} = \mathcal A_\theta^{alg}/ im(1 \otimes b_1)\cong \mathbb C$. We also know that $H_0(\mathcal A_\theta^{alg}, {}_{-1}\mathcal A_\theta^{alg})^{\mathbb Z_2} \cong \mathbb C^4$. Hence we have $HH_0(\mathcal A_\theta^{alg} \rtimes \mathbb Z_2) \cong \mathbb C^5$. \par
We also know from the previous section that $HH_2(\mathcal A_\theta^{alg})^{\mathbb Z_2} \cong \mathbb C$. As we know that $H_2(\mathcal A_\theta^{alg} , {}_{-1}\mathcal A_\theta) =0$, hence, we derive the desired dimension for $HH_2(\mathcal A_\theta^{alg} \rtimes \mathbb Z_2)$. \par
From the previous section we also know that $HH_1(\mathcal A_\theta^{alg} \rtimes \mathbb Z_2) \cong 0$. Combining this result with $H_1(\mathcal A_\theta^{alg}, {}_{-1}\mathcal A_\theta^{alg}) =0$, we get that $H_1(\mathcal A_\theta^{alg} \rtimes \mathbb Z_2) = 0$.
\end{proof}

\subsection{Cyclic homology of  $\mathcal A_\theta^{alg} \rtimes \mathbb Z_2$}
A. Connes introduced[C] an $S,B,I$ long exact sequence relating the Hochschild and cyclic homology of an algebra $A$,
\begin{center}
$... \xrightarrow{B} HH_n(A) \xrightarrow{I} HC_n(A) \xrightarrow{S} HC_{n-2}(A) \xrightarrow{B} HH_{n-1}(A) \xrightarrow{I}...$.\\
\end{center}
Since the $\mathbb Z_2$ action on $\mathcal A_{\theta, -1}^{alg}$ commutes with the map ${}_{-1}b$ , we obtain the following exact sequence
\begin{center}
$... \xrightarrow{B} (HH_n(\mathcal A^{alg}_{\theta,-1}))^{\mathbb Z_2} \xrightarrow{I} (HC_n(\mathcal A^{alg}_{\theta,-1}))^{\mathbb Z_2} \xrightarrow{S} (HC_{n-2}(\mathcal A^{alg}_{\theta, -1}))^{\mathbb Z_2} \xrightarrow{B} (HH_{n-1}(\mathcal A^{alg}_{\theta,-1}))^{\mathbb Z_2} \xrightarrow{I}...$.\\
\end{center}
\par
We know that $(HH_2(\mathcal A^{alg}_{\theta,-1}))^{\mathbb Z_2}=(HH_1(\mathcal A^{alg}_{\theta,-1}))^{\mathbb Z_2}=0$. Hence we obtain that
\begin{center}
$(HC_2(\mathcal A^{alg}_{\theta,-1}))^{\mathbb Z_2} \cong (HC_0(\mathcal A^{alg}_{\theta, -1}))^{\mathbb Z_2}$.
\end{center}
But, a preliminary result shows that $HH_0(\mathcal A_{\theta,-1}^{alg})=HC_0(\mathcal A_{\theta, -1}^{alg})$. Hence, we obtain that
\begin{center}
 $(HC_2(\mathcal A^{alg}_{\theta,-1}))^{\mathbb Z_2}=(HC_0(\mathcal A^{alg}_{\theta,-1}))^{\mathbb Z_2} \cong \mathbb C^4$.
\end{center}
Also, since $(HH_2(\mathcal A^{alg}_{\theta,-1}))^{\mathbb Z_2}=(HH_3(\mathcal A^{alg}_{\theta,-1}))^{\mathbb Z_2}=0$, we obtain that $(HC_3(\mathcal A^{alg}_{\theta,-1}))^{\mathbb Z_2}\cong (HC_1(\mathcal A^{alg}_{\theta,-1}))^{\mathbb Z_2}$. Since $HH_1(\mathcal A_{\theta,-1}^{alg})$ is trivial, we have $(HC_3(\mathcal A^{alg}_{\theta,-1}))^{\mathbb Z_2}\cong (HC_1(\mathcal A^{alg}_{\theta,-1}))^{\mathbb Z_2}=0$.
\end{proof}

\subsection{Periodic cyclic homology}
\begin{thm}
$HC_{even}(\mathcal A_\theta^{alg} \rtimes \mathbb Z_2) \cong \mathbb C^6 \text{ while } HC_{odd}(\mathcal A_\theta^{alg} \rtimes \mathbb Z_2) = 0$.
\end{thm}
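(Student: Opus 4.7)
The plan is to deduce the periodic groups from the Hochschild and ordinary cyclic computations already carried out, via Connes' $SBI$ sequence and the paracyclic splitting of Theorem 4.1. By that splitting, cyclic homology (and hence its inverse limit under the periodicity operator) decomposes as
\[
HC_{n}(\mathcal A_\theta^{alg}\rtimes\mathbb Z_2) \;\cong\; HC_{n}(\mathcal A_\theta^{alg})^{\mathbb Z_2} \;\oplus\; HC_{n}(\mathcal A_{\theta,-1}^{alg})^{\mathbb Z_2},
\]
so I will treat the two twisted summands separately and add their contributions at the end.

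For the $t=-1$ summand, the preceding subsection already gave $(HC_{0})^{\mathbb Z_2} \cong (HC_{2})^{\mathbb Z_2} \cong \mathbb C^4$ and $(HC_{1})^{\mathbb Z_2} = (HC_{3})^{\mathbb Z_2} = 0$. Since $HH_{n}(\mathcal A_{\theta,-1}^{alg})^{\mathbb Z_2} = 0$ for every $n \geq 1$ (Lemmas 6.3 and 6.5, plus the triviality of higher Hochschild groups from the Koszul resolution), the $SBI$ fragment
\[
0 \;=\; HH_{n}^{\mathbb Z_2} \;\longrightarrow\; HC_{n}^{\mathbb Z_2} \;\xrightarrow{\,S\,}\; HC_{n-2}^{\mathbb Z_2} \;\longrightarrow\; HH_{n-1}^{\mathbb Z_2} \;=\; 0
\]
forces $S$ to be an isomorphism for every $n \geq 3$. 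Consequently the inverse system stabilizes, and the $t=-1$ part contributes $\mathbb C^4$ in even degree and $0$ in odd degree.

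For the $t=1$ summand I run the same machine on the data established in Section 5, namely $HH_{0}^{\mathbb Z_2}\cong\mathbb C$, $HH_{1}^{\mathbb Z_2}=0$, $HH_{2}^{\mathbb Z_2}\cong\mathbb C$, and $HH_{n}^{\mathbb Z_2}=0$ for $n\geq 3$. The $SBI$ sequence immediately gives $HC_{0}^{\mathbb Z_2}\cong\mathbb C$ and $HC_{1}^{\mathbb Z_2}=0$. For $HC_{2}^{\mathbb Z_2}$ I use the fragment
\[
0 \;=\; HC_{1}^{\mathbb Z_2} \;\xrightarrow{B}\; HH_{2}^{\mathbb Z_2} \;\xrightarrow{I}\; HC_{2}^{\mathbb Z_2} \;\xrightarrow{S}\; HC_{0}^{\mathbb Z_2} \;\xrightarrow{B}\; HH_{1}^{\mathbb Z_2} \;=\; 0,
\]
which reads off the short exact sequence $0 \to \mathbb C \to HC_{2}^{\mathbb Z_2} \to \mathbb C \to 0$, hence $HC_{2}^{\mathbb Z_2}\cong\mathbb C^{2}$. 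For every $n\geq 3$ both Hochschild neighbours in the $SBI$ sequence vanish, so $S:HC_{n}^{\mathbb Z_2}\to HC_{n-2}^{\mathbb Z_2}$ is an isomorphism. The $t=1$ part therefore contributes $\mathbb C^2$ in even degree and $0$ in odd degree.

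Adding the two summands yields $\mathbb C^{4}\oplus\mathbb C^{2}=\mathbb C^{6}$ in even degree and $0$ in odd degree, which is the assertion. The only step that is not purely formal is pinning down the dimension of $HC_{2}^{\mathbb Z_2}$ on the $t=1$ side, which requires knowing that the two $B$-connecting maps bracketing it vanish; however, this is automatic here because their targets are already zero. Everything else is bookkeeping on $SBI$ together with the paracyclic decomposition.
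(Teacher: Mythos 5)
Your argument is correct and follows essentially the same route as the paper: the paracyclic splitting into the $t=1$ and $t=-1$ summands, the $SBI$ sequence applied to each using the vanishing of the higher Hochschild groups, giving $\mathbb C^2$ from the $t=1$ part (via $0\to HH_2\to HC_2\to HC_0\to 0$) and $\mathbb C^4$ from the $t=-1$ part, with all odd groups zero; your write-up is in fact more explicit than the paper's, which leaves the $t=1$ bookkeeping implicit. (One trivial slip: for the $t=1$ summand at $n=3$ the neighbour $HH_2^{\mathbb Z_2}\cong\mathbb C$ does not vanish, but $S:HC_3\to HC_1$ is still an isomorphism because $HC_1=0$, so nothing is affected.)
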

\begin{proof}
It is clear from the above calculations that we have
\begin{center}
$(HC_\bullet(\mathcal A_{\theta,-1}^{alg}))^{\mathbb Z_2} \cong (HC_{\bullet-2}(\mathcal A_{\theta,-1}^{alg}))^{\mathbb Z_2}$
\end{center}
Hence we conclude that $HC_{even}(\mathcal A_\theta^{alg} \rtimes \mathbb Z_2) \cong \mathbb C^6$ \par
As for the odd cyclic homology, we have $(HC_3(\mathcal A^{alg}_{\theta,-1}))^{\mathbb Z_2}\cong (HC_1(\mathcal A^{alg}_{\theta,-1}))^{\mathbb Z_2}=0$, and we also have $HC_3(\mathcal A_\theta^{alg})^{\mathbb Z_2} = HC_1(\mathcal A_\theta^{alg})^{\mathbb Z_2}=HH_1(\mathcal A_\theta^{alg})^{\mathbb Z_2} = 0$. Combining these two results, we obtain $HC_{odd}(\mathcal A_\theta^{alg} \rtimes \mathbb Z_2) = 0$. So we have computed the Hochschild and cyclic homology of the noncommutative $\mathbb Z_2$ orbifold.
\end{proof}

\section{$\mathbb Z_3$ action on $\mathcal A_\theta^{alg}$}
The group $\mathbb Z_3$ is embedded in $SL(2,\mathbb Z)$ through its generator $g= \left[
 \begin{array}{cc}
   -1 & -1 \\
   1 & 0
 \end{array} \right]\in SL(2,\mathbb Z)$. The generator acts on $\mathcal A_\theta^{alg}$ in the following way
\begin{center}
$U_1 \mapsto U_2^{-1}, U_2 \mapsto \displaystyle \frac{U_1 U_2^{-1}}{\sqrt\lambda}$.
\end{center}
\subsection{Hochschild homology}..\newline
\begin{thm}
$HH_0((\mathcal {A}_{\theta, \omega^{\pm 1}}^{alg \bullet})^{\mathbb {Z}_3}) \cong \mathbb C^3$, while $HH_k((\mathcal {A}_{\theta, \omega^{\pm 1}}^{alg \bullet})^{\mathbb {Z}_3})$  are trivial groups for $k \geq 1$.
\end{thm}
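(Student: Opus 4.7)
The strategy mirrors the $\mathbb Z_2$ computation of the previous section. First I would set up an analog of Lemma 6.2: for $g = \omega^{\pm 1}$ a non-trivial element of $\mathbb Z_3$, identify the subcomplex $(J_{\ast,g}^{alg})^{\mathbb Z_3}$ with the $\mathbb Z_3$-invariants of the Hochschild complex $C_\bullet(\mathcal A_\theta^{alg}, {}_g\mathcal A_\theta^{alg})$, where ${}_g\mathcal A_\theta^{alg}$ carries the twisted bimodule structure $a \cdot \alpha = (g \cdot a)\alpha$, $\alpha \cdot a = \alpha a$. By the adjusted Koszul resolution of Lemma 4.1, it then suffices to compute $H_i(\mathcal A_\theta^{alg}, {}_g\mathcal A_\theta^{alg})$ for $i=0,1,2$ and extract the $\mathbb Z_3$-invariant parts via the chain maps $h_i, k_i$ between Koszul and bar complexes.

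For $H_2$, the kernel condition $(1 \otimes b_2)(a \otimes I) = 0$ translates, using $g \cdot U_1 = U_2^{-1}$ and $g \cdot U_2 = \lambda^{-1/2} U_1 U_2^{-1}$, into a pair of recursion relations on the coefficients $a_{n,m}$ that force the support to be infinite once a single coefficient is nonzero. Since $\mathcal A_\theta^{alg}$ consists of finitely supported elements, $H_2(\mathcal A_\theta^{alg}, {}_g\mathcal A_\theta^{alg}) = 0$, exactly as in the $\mathbb Z_2$ case. For $H_0$, I would write down the two image equations coming from $b_1(a,0)$ and $b_1(0,a)$, then reduce modulo these relations to obtain a finite-dimensional quotient. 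The remaining generators correspond to a short list of residual lattice points in ${}_g\mathcal A_\theta^{alg}$; I expect three survivors (one more than the $g=1$ case but one fewer than the $\mathbb Z_2$ twisted sector), which after passing to $\mathbb Z_3$-invariants via $k_0 = h_0 = \mathrm{id}$ remain $\mathbb C^3$.

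For $H_1$, the argument requires the diagram/induction technique introduced for the $\mathbb Z_2$ sector. Writing out the kernel and image equations for the twisted Koszul map, one attaches to each $1$-cycle $\varphi = (\varphi^1,\varphi^2)$ a lattice diagram $Dgm(\varphi) \subset \mathbb Z^2$, and as before one first verifies that no diagram can contain a pendant edge and that no cycle can have three or fewer nonzero entries. Then, inducting on the number of nonzero entries, one chooses the bottommost point of the leftmost nonzero column and defines a reduction map $\wedge$ (with coefficients adjusted for the $\mathbb Z_3$ twist factors $\lambda^{\pm 1/2}$ arising from $g \cdot U_2$) that strictly decreases the support after finitely many iterations. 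Each local kernel equation around the reduced point must be re-verified case by case, and a correcting $b_2$-preimage must be exhibited. The main obstacle will be the bookkeeping of these twist factors: the $\mathbb Z_3$ action mixes $U_1$ and $U_2$, so the kernel and image equations are no longer symmetric in the two lattice directions, and one must check that the reduction still terminates and that the $\omega$ and $\omega^{-1}$ sectors are handled uniformly (or by an explicit symmetry between them).

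Once $H_1 = H_2 = 0$ and $H_0 \cong \mathbb C^3$ on each of the two twisted sectors is established, the $\mathbb Z_3$-invariance is automatic in degrees $1$ and $2$ (the groups are already trivial), while in degree $0$ the chain-level action preserves the explicit generators, giving $HH_0((\mathcal A_{\theta,\omega^{\pm 1}}^{alg \bullet})^{\mathbb Z_3}) \cong \mathbb C^3$ and $HH_k((\mathcal A_{\theta,\omega^{\pm 1}}^{alg \bullet})^{\mathbb Z_3}) = 0$ for $k \geq 1$, as claimed.
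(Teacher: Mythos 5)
Your plan follows essentially the same route as the paper: the twisted-bimodule reformulation of the $\omega^{\pm1}$ sectors, the Koszul resolution giving $H_2=0$ from the finite-support contradiction, the explicit quotient computation yielding the three generators $\bar a_{0,0},\bar a_{1,0},\bar a_{0,1}$ for $H_0$, and the lattice-diagram induction with a $\wedge$-reduction map for $H_1=0$, with invariance in degree $0$ read off via $h_0=k_0=\mathrm{id}$. The only quibble is the parenthetical ``one more than the $g=1$ case'' (three generators is two more than the single generator of the untwisted sector), which does not affect the argument; the substantive work you defer (the case-by-case verification that $\wedge(S_0)$ is again a kernel solution and that the iteration terminates) is exactly what the paper's Lemmas on the $\mathbb Z_3$ sector carry out.
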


\begin{lemma}
Consider the following chain complex $J_{\ast, \omega{\pm 1}}^{\mathbb Z_3}$
\begin{center}
$J_{\ast ,  \omega^{\pm 1}}^{\mathbb Z_3} := 0 \xleftarrow{{}_{ \omega^{\pm 1}}b} (\mathcal A^{alg}_{\theta,  \omega^{\pm 1}})^{\mathbb Z_3} \xleftarrow{{}_{ \omega^{\pm 1}}b} ((\mathcal A^{alg}_{\theta,  \omega^{\pm 1}})^{\otimes 2})^{\mathbb Z_3} \xleftarrow{{}_{ \omega^{\pm 1}}b} ((\mathcal A^{alg}_{\theta,  \omega^{\pm 1}})^{\otimes 3})^{\mathbb Z_3}...$\\
\end{center}
where,
\begin{center}
${}_{ \omega^{\pm 1}}b(a_0\otimes a_1\otimes....\otimes a_n)= b'(a_0\otimes a_1\otimes....\otimes a_n)+(-1)^n(( \omega^{\mp 1}\cdot a_n)a_0\otimes a_1\otimes....\otimes a_{n-1}).$\\
\end{center}
Then,
\begin{center}
$H_\bullet(J_{\ast , \omega^{\pm 1}}^{\mathbb Z_3}, {}_ {\omega^{\pm 1}}b) = (H_\bullet(J_\ast(\mathcal A_\theta^{alg}, {}_{ \omega^{\pm 1}}\mathcal A_\theta^{alg})), b)^{\mathbb Z_3}$
\end{center}
\end{lemma}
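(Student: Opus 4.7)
The plan is to follow the same two-step strategy used for the analogous $\mathbb Z_2$ lemma (Lemma 6.2), simply replacing the involution $-1$ by $\omega^{\pm 1}$ throughout. First I would reinterpret the twisted boundary ${}_{\omega^{\pm 1}}b$ as the ordinary Hochschild boundary on a Hochschild complex with coefficients in a twisted bimodule, and then I would invoke Lemma 3.2 to commute the operation of taking $\mathbb Z_3$-invariants with taking homology.

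More precisely, I would equip $\mathcal A_\theta^{alg}$ with two new bimodule structures, call them ${}_{\omega^{\pm 1}}\mathcal A_\theta^{alg}$, in which the right action of $\mathcal A_\theta^{alg}$ is unchanged and the left action is twisted by the generator $\omega^{\pm 1}\in \mathbb Z_3$ via
\begin{equation*}
a\cdot \alpha \;=\; (\omega^{\mp 1}\cdot a)\,\alpha, \qquad \alpha \cdot a \;=\; \alpha\,a,
\end{equation*}
for $a\in \mathcal A_\theta^{alg}$ and $\alpha \in {}_{\omega^{\pm 1}}\mathcal A_\theta^{alg}$. A direct expansion of the ordinary Hochschild differential $b$ on $C_\bullet(\mathcal A_\theta^{alg},{}_{\omega^{\pm 1}}\mathcal A_\theta^{alg})$ shows that the twist in the last face operator produces precisely the extra term $(-1)^n(\omega^{\mp 1}\cdot a_n)a_0\otimes a_1\otimes\cdots\otimes a_{n-1}$, so that $b$ on the twisted Hochschild complex coincides with ${}_{\omega^{\pm 1}}b$. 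Consequently
\begin{equation*}
H_\bullet(J_{\ast ,\,\omega^{\pm 1}}^{\mathbb Z_3},\,{}_{\omega^{\pm 1}}b) \;=\; H_\bullet\bigl(J_\ast(\mathcal A_\theta^{alg},{}_{\omega^{\pm 1}}\mathcal A_\theta^{alg})^{\mathbb Z_3},\,b\bigr).
\end{equation*}

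Next I would verify that the $\mathbb Z_3$-action commutes with $b$ on $C_\bullet(\mathcal A_\theta^{alg},{}_{\omega^{\pm 1}}\mathcal A_\theta^{alg})$. Because $\omega^{\pm 1}$ acts by an algebra automorphism on $\mathcal A_\theta^{alg}$, commutation with the inner faces $d_i$ for $0\le i<n$ is immediate, and commutation with the twisted last face $d_n$ reduces to the identity $\omega^{\pm 1}\cdot(\omega^{\mp 1}\cdot a)=a$, which holds since $\mathbb Z_3$ is abelian. With this in hand, Lemma 3.2 (applied to the complex $J_\ast(\mathcal A_\theta^{alg},{}_{\omega^{\pm 1}}\mathcal A_\theta^{alg})$ equipped with the above $\mathbb Z_3$-action) gives
\begin{equation*}
H_\bullet\bigl(J_\ast(\mathcal A_\theta^{alg},{}_{\omega^{\pm 1}}\mathcal A_\theta^{alg})^{\mathbb Z_3},\,b\bigr) \;=\; \bigl(H_\bullet(J_\ast(\mathcal A_\theta^{alg},{}_{\omega^{\pm 1}}\mathcal A_\theta^{alg})),\,b\bigr)^{\mathbb Z_3},
\end{equation*}
which combined with the previous display yields the claim.

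The only nontrivial piece is the reformulation of ${}_{\omega^{\pm 1}}b$ as a genuine Hochschild boundary on a twisted bimodule, and this is exactly the step that was carried out for $\mathbb Z_2$. The choice of sign in the twist ($\omega^{\mp 1}$ on the left module structure corresponding to ${}_{\omega^{\pm 1}}\mathcal A_\theta^{alg}$) is the one subtlety one has to be careful with, but it is forced by matching the cyclically shifted term in ${}_{\omega^{\pm 1}}b$ with the last face $d_n(a_0\otimes\cdots\otimes a_n)=a_n\cdot a_0\otimes a_1\otimes\cdots\otimes a_{n-1}$ under the twisted left action. Everything else is a routine transcription of the argument from Section~6.
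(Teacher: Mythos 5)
Your proposal is correct and follows exactly the route the paper takes: the paper omits the proof of this $\mathbb Z_3$ lemma, but its proof of the analogous $\mathbb Z_2$ statement (Lemma 6.2) consists of precisely your two steps — absorbing the extra term of ${}_{\omega^{\pm 1}}b$ into a twisted left module structure on ${}_{\omega^{\pm 1}}\mathcal A_\theta^{alg}$ so that ${}_{\omega^{\pm 1}}b$ becomes the ordinary Hochschild boundary, and then invoking Lemma 3.2 to exchange $\mathbb Z_3$-invariants with homology. Your remark that the twist must be by $\omega^{\mp 1}$ to match the last face operator is the right (and only) point of care in transcribing the argument.
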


Hence using the adjusted Connes' complex for the algebraic case we can now calculate the homology groups.
\begin{center}
\begin{enumerate}
\item[$\bullet$]$H_0(\mathcal A_\theta^{alg},{}_{ \omega^{\pm 1}}\mathcal A^{alg}_\theta)= {}_{ \omega^{\pm 1}}\mathcal A^{alg}_\theta\otimes_{\mathfrak B_\theta^{alg}} \mathfrak B_\theta^{alg} / Image(1\otimes b_1),$
\item[$\bullet$]$H_1(\mathcal A_\theta^{alg},{}_{ \omega^{\pm 1}}\mathcal A^{alg}_\theta)= Ker(1\otimes b_1) / Image(1\otimes b_2),$
\item[$\bullet$]$H_2(\mathcal A_\theta^{alg},{}_{ \omega^{\pm 1}}\mathcal A^{alg}_\theta)= Ker(1\otimes b_2)$.
\end{enumerate}
\end{center}

\begin{lemma}$H_2(\mathcal A_\theta^{alg},{}_{ \omega^{\pm 1}}\mathcal A^{alg}_\theta) \cong 0.$
\end{lemma}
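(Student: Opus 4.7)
The plan is to mirror the structure of the analogous $\mathbb Z_2$ computation (Lemma 6.3) and reduce to a direct coefficient calculation showing that $\ker(1\otimes b_2)$ on the twisted bimodule ${}_{\omega^{\pm 1}}\mathcal A_\theta^{alg}$ contains no nonzero finitely supported element. By the $\mathbb Z_3$-analogue of Lemma 6.2 (namely the preceding Lemma 7.2) together with the Koszul resolution of Lemma 4.1, it suffices to work in this twisted bimodule, in which left multiplication by $U_j$ is precomposed with the $\mathbb Z_3$-action.

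First I would rewrite the map explicitly, exactly as in the $\mathbb Z_2$ case but with the twist of $\omega$ instead of $-1$:
$$(1 \otimes b_2)(a \otimes I) \;=\; \bigl(a U_2 - \lambda(\omega^{\pm 1}\cdot U_2)\,a,\; (\omega^{\pm 1}\cdot U_1)\,a - \lambda a U_1\bigr).$$
Plugging in the $\mathbb Z_3$-action recorded at the start of Section 7, namely $\omega \cdot U_1 = U_2^{-1}$ and $\omega\cdot U_2 = \lambda^{-1/2} U_1 U_2^{-1}$, the kernel conditions become
$$a U_2 \;=\; \sqrt{\lambda}\,U_1 U_2^{-1}\, a \qquad\text{and}\qquad U_2^{-1}\,a \;=\; \lambda\, a\, U_1.$$

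Next, expanding $a = \sum a_{n,m}\,U_1^n U_2^m$ and comparing coefficients of $U_1^p U_2^q$ via $U_2 U_1 = \lambda U_1 U_2$, a short calculation converts these two equations into a pair of linear recurrences whose essential content is that $a_{p,q+1}$ is a nonzero scalar multiple of $a_{p-1,q}$ (and analogously for the other equation, with a different shift vector). Iterating any such recurrence from any nonzero coefficient produces an infinite chain of nonzero entries along a diagonal of $\mathbb Z^2$, which contradicts the finite-support hypothesis defining $\mathcal A_\theta^{alg}$. Hence $a=0$, giving $H_2(\mathcal A_\theta^{alg},{}_\omega\mathcal A_\theta^{alg}) = 0$. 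The case $\omega^{-1}$ is handled by the same argument after substituting the action of $\omega^{-1}$; no new idea is required.

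The main obstacle I expect is purely bookkeeping: unlike the $\mathbb Z_2$ case, $\omega \cdot U_2 = \lambda^{-1/2} U_1 U_2^{-1}$ is a mixed monomial with a half-integer power of $\lambda$, so the resulting shift vector on $\mathbb Z^2$ and the scalar prefactors have to be tracked carefully when extracting the recurrences. Importantly, no inductive diagrammatic argument of the kind needed for $H_1$ in the $\mathbb Z_2$ case is required here, because already a single recurrence produces an infinite orbit in $\mathbb Z^2$ and thereby rules out nontrivial finitely supported kernel elements in one step.
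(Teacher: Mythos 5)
Your proposal is correct and follows essentially the same route as the paper: simplify $(1\otimes b_2)$ using the twisted bimodule structure to get the two relations $aU_2=\sqrt{\lambda}\,U_1U_2^{-1}a$ and $U_2^{-1}a=\lambda aU_1$, convert them to coefficient recurrences with a nonzero shift on $\mathbb Z^2$, and conclude from finite support. Your phrasing of the final step (an infinite chain of nonzero entries along a fixed shift direction) is in fact a cleaner statement of the paper's "all coefficients equal up to a multiple of $\lambda$" argument.
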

\begin{proof}
We prove here for the case $g= \omega$, the proof for $g=\omega^2$ is similar. Consider the map $(1 \otimes b_2)$ in the tensor complex. To calculate the kernel of this map we have a closer look at the differential.
\begin{center}
$(1\otimes b_2)(a\otimes I)=a\otimes_{\mathfrak B_\theta^{alg}}(U_2\otimes I-\lambda\otimes U_2)\otimes e_1-a\otimes_{\mathfrak B_\theta^{alg}}(\lambda U_1\otimes I-I\otimes U_1)\otimes e_2$.
\end{center}
Using the twisted bimodule structure of ${}_{ \omega}\mathcal A_\theta^{alg}$ over $\mathcal A_\theta^{alg}$, the equation can be simplified to the following,
\begin{center}
$(1\otimes b_2)(a\otimes I)= (aU_2-\sqrt{\lambda}U_1U_2^{-1}a, U_2^{-1}a-\lambda a U_1 )$.
\end{center}
Hence we obtain the following relation over an element $(a \otimes 1)$ to reside in $ker( 1\otimes b_2)$. 
\begin{center}
$H_2(\mathcal A_\theta^{alg},{}_{\omega}\mathcal A^{alg}_\theta)= \left\lbrace a\in {}_{\omega}\mathcal A_\theta^{alg} | a_{n,{m-1}}={\lambda}^{1.5-m}a_{n-1,m+1}; {\lambda}^{m+1}a_{n-1,m}=\lambda^n a_{n,m+1}\right\rbrace$.
\end{center}
Since, no such nontrivial element exists in ${}_{\omega}\mathcal A_\theta^{alg}$. If such a non-trivial element existed, then all $a_{m,n}$ have to be same up to multiple of $\lambda$. But since the algebra consists of finitely supported elements, they are reduced to zero. So, we have the desired result.
\begin{center}
$H_2(\mathcal A_\theta^{alg},{}_{\omega}\mathcal A^{alg}_\theta)=0$.
\end{center}
\end{proof}
\begin{lemma} $H_0(\mathcal A_\theta^{alg},{}_{\omega}\mathcal A^{alg}_\theta)^{\mathbb Z_2} \cong \mathbb C^3.$
\end{lemma}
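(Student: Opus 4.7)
The strategy parallels the computation for $g=-1$ in Section 7.1, transplanted to the $\omega$-twist. First I would apply the Koszul resolution of Lemma 4.1 to the $\omega$-twisted bimodule to write
\[
H_0(\mathcal A_\theta^{alg},{}_{\omega}\mathcal A^{alg}_\theta) = {}_{\omega}\mathcal A^{alg}_\theta / \mathrm{im}(1\otimes b_1).
\]
Using the twisted bimodule rule $a \cdot \alpha = (\omega \cdot a)\alpha$ together with $\omega \cdot U_1 = U_2^{-1}$ and $\omega \cdot U_2 = \lambda^{-1/2}U_1 U_2^{-1}$, the boundary collapses to
\[
(1\otimes b_1)(a\otimes 1 \otimes e_1) = U_2^{-1}a - aU_1, \qquad (1\otimes b_1)(a\otimes 1 \otimes e_2) = \lambda^{-1/2}U_1U_2^{-1}a - aU_2.
\]

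Second, I would evaluate these maps on the monomial basis $a = U_1^n U_2^m$. A direct calculation using $U_2 U_1 = \lambda U_1 U_2$ produces two families of relations in $H_0$, namely
\[
\lambda^{-n}U_1^n U_2^{m-1} \equiv \lambda^m U_1^{n+1}U_2^m, \qquad \lambda^{-n-1/2}U_1^{n+1}U_2^{m-1} \equiv U_1^n U_2^{m+1}.
\]
As lattice identifications on $\mathbb Z^2$ these are shifts by $(1,1)$ and $(1,-2)$, which generate the sublattice $\Lambda = \{(x,y) : x \equiv y \pmod 3\}$ of index $3$. Hence every monomial reduces in $H_0$ to a scalar multiple of one of three fixed representatives, for example $1$, $U_2$, $U_2^2$, one for each coset of $\mathbb Z^2/\Lambda$. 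A diagrammatic check in the spirit of Section 7.1 is needed to confirm that these three classes are linearly independent in the quotient and that no compound relation collapses them further, giving $H_0 \cong \mathbb C^3$ before any invariance is taken.

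Third, since $h_0 = k_0 = \mathrm{id}$, the action used in forming the invariants on the left side of the statement is simply inherited from the natural action of the cyclic group generated by $\omega$ on $\mathcal A_\theta^{alg}$. The linear action of $\omega$ on $\mathbb Z^2$ has matrix with rows $(0,1)$ and $(-1,-1)$, and one checks directly that this preserves $\Lambda$ and acts as the identity on $\mathbb Z^2/\Lambda$. What remains is to verify that the induced action on each of the three representative classes is multiplication by the scalar $1$ rather than by a nontrivial cube root of unity; for each representative I would substitute $\omega$ into the formula and reduce back using the two identification families above. Assuming all three representatives come out fixed, the full $H_0$ is invariant and we obtain $\mathbb C^3$, matching the statement.

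The hardest step will be the scalar bookkeeping. The twist $\omega$ mixes $U_1$ and $U_2$, so unlike the $g=-1$ case where the lattice identifications are axis-aligned, here they run along the skew directions $(1,1)$ and $(1,-2)$ and carry nontrivial $\lambda^{\bullet}$ factors that accumulate along every reduction path. One must show these path-dependent scalars are consistent (so that the three quotient classes are well-defined), that no further hidden relation appears by iterating $(1\otimes b_1)$, and that the $\omega$-action on each class yields a scalar that is in fact $1$ and not a nontrivial cube root of unity. The inductive diagram method of Section 7.1, adapted to the skew lattice $\Lambda$, is the natural tool for this verification.
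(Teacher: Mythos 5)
Your proposal is correct and follows essentially the same route as the paper: reduce $H_0$ to ${}_{\omega}\mathcal A_\theta^{alg}/\mathrm{im}(1\otimes b_1)$ via the Koszul resolution, read off the two monomial relations as lattice shifts by $(1,1)$ and $(1,-2)$ generating an index-$3$ sublattice (the paper takes representatives $\bar a_{0,0},\bar a_{1,0},\bar a_{0,1}$), and then use $h_0=k_0=\mathrm{id}$ to transfer the $\mathbb Z_3$ action (the ``$\mathbb Z_2$'' in the statement is a typo) unchanged to the Koszul complex and conclude full invariance. Your extra caution about path-dependent $\lambda$-scalars and about the action being the trivial rather than a nontrivial character on each coset class is warranted — the paper simply asserts these points — but it does not change the argument.
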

\begin{proof}
We have the map $(1 \otimes b_1)$ in the tensor complex defined below,
\begin{center}
$(1\otimes b_1)(a\otimes I\otimes e_j)=a\otimes_{\mathfrak B_\theta^{alg}}(I\otimes U_j-U_j\otimes I)= aU_j-U_j^{-1}a$.
\end{center}
As before if we use the twisted bicomplex structure of ${}_{\omega}\mathcal A_\theta^{alg}$ over the algebra $\mathcal A_\theta^{alg}$, the map$(1 \otimes b_1)$ can be simplified as follows,
\begin{center}
$b_1(a_1,0)= a_1U_1- U_2^{-1}a_1\text{ and } b_1(0,a_2)= a_2 U_2-\displaystyle \frac{U_1U_2^{-1}}{\sqrt{\lambda}}a_2$.
\end{center}
Observe that  $b_1(a_1,a_2)= b_1(a_1,0)+b_1(0,a_2)$. Furthermore we can simplify the calculation by considering only elements of the type $b_1(U_1^nU_2^m,0)$, and similarly for $b_1(0,a_2)$ we can consider the elements of the type $b_1(0,U_1^nU_2^m)$.
We observe that 
\begin{center}
$b_1(U_1^nU_2^m,0)= ({U_1}^n{U_2}^m)U_1-U_2^{-1}({U_1}^n{U_2}^m)={\lambda}^{-n}({\lambda}^{m}U_1U_2-1)U_1^{n}U_2^{m-1}$
\end{center}
and
\begin{center}
$b_1(0,U_1^nU_2^{m-1})= ({U_1}^n{U_2}^{m-1})U_2-\frac{U_1 U_2^{-1}}{\sqrt\lambda}({U_1}^n{U_2}^{m-1})= ({\lambda}^{-2n}U_2^2-U_1)U_1^nU_2^{m-2}.$
\end{center}
From the above relations we see that in this quotient space $\bar{a_{1,0}} =\bar{ a_{0,-1}}=\bar{ a_{0,2}}$. Observe that $\langle \bar{a_{0,0}}, \bar{a_{1,0}}, \bar{a_{0,1}} \rangle$ is a basis of this quotient space. \par
Now to obtain $(H_0(\mathcal A_\theta^{alg}, {}_{\omega}\mathcal A_\theta^{alg}))^{\mathbb Z_3}$ we consider complex map $h: J_\ast \to C_\ast$, since the map $h_0 : J_0(\mathcal A_\theta^{alg}) \to C_0(\mathcal A_\theta^{alg})$ is the identity map, the $\mathbb Z_3$ action on the bar complex is translated to the Kozul complex with no alteration. Hence we get that

\begin{center}
$H_0(\mathcal A_\theta^{alg},{}_{\omega}\mathcal A^{alg}_\theta)^{\mathbb {Z}_3}= \langle \bar{a_{0,0}}, \bar{a_{1,0}}, \bar{a_{0,1}} \rangle.$\\
\end{center}
\end{proof}

\begin{lemma} $H_1(\mathcal A_\theta^{alg},{}_{\omega}\mathcal A^{alg}_\theta) \cong 0$.
\end{lemma}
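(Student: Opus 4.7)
The plan is to mimic the argument of Lemma 6.6 for the $\mathbb{Z}_2$ case, now adapted to the $\omega$-twisted bimodule ${}_\omega \mathcal{A}_\theta^{alg}$. First, using the Koszul resolution of Lemma 4.1 and the twisted bimodule structure $a\cdot \alpha = (\omega \cdot a)\alpha$ and $\alpha \cdot a = \alpha a$, I compute the differential $(1 \otimes b_1)$ and $(1 \otimes b_2)$ explicitly on ${}_\omega\mathcal{A}_\theta^{alg} \otimes_{\mathfrak{B}_\theta^{alg}} J_\bullet$. Using the action $U_1 \mapsto U_2^{-1}$, $U_2 \mapsto U_1 U_2^{-1}/\sqrt{\lambda}$, I translate the kernel condition $(1\otimes b_1)(a^1,a^2) = 0$ into a linear relation in the lattice $\mathbb{Z}^2$ among the coefficients $a^1_{n,m}$ and $a^2_{n,m}$, and similarly record the image of $(1\otimes b_2)(a)$ as a $4$-term relation attached to each lattice point $(n,m)$.

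Next, I import the graphical machinery developed in Lemma 6.6: define kernel-diagrams $Dgm(\varphi)$ on $\mathbb{Z}^2$ with boxes/dots for the two components, and reduce to a single connected component. The first technical step is the analog of the "no open edges" lemma: I would verify, by testing the kernel equation at the appropriate lattice point adjacent to an isolated boundary element, that no connected component of $Dgm(\varphi)$ can terminate in an open edge. This step requires redoing the contradiction computation with the new twist coefficients, but it is formally identical to the $\mathbb{Z}_2$ proof.

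The core of the proof is an induction on $|S_0|$, the number of non-zero entries of a kernel solution. I would check the base cases by direct inspection (showing that no non-trivial kernel solutions exist with $\leq 3$ entries, and that any 4-entry solution is automatically a boundary, via the minimal image-diagram analogous to $A_4$). For the inductive step I define a shift operator $\wedge$ on the leftmost, lowest non-zero lattice point $\mu$ of $S_0$ (which by an elementary argument must be of type $a^2_{r,s}$), designed to kill the two entries at $(r,s)$ and $(r+1,s-1)$ by subtracting off an explicit element $b_2({}_{r,s}g)$. Verifying that $\wedge(S_0)$ is again a solution reduces to checking four kernel equations at the lattice points $(r+1, s\pm 1)$ and $(r+3, s\pm 1)$ --- this is the main calculation and the place where the $\omega$-twist factors must be tracked carefully; I expect this to be the \emph{main obstacle}, because the new twisting coefficients $\sqrt{\lambda}$, $\lambda^{-n}$, $\lambda^{1.5-m}$ make the required telescoping identities more delicate than in the $\mathbb{Z}_2$ setting.

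Finally, a termination lemma analogous to Lemma 6.10 shows that $\wedge^N(S_0) = 0$ for $N$ sufficiently large: each iteration of $\wedge$ either strictly decreases $|S_0|$ or shifts the leftmost non-zero column rightward inside the bounding box $\beta$. Once the left column reaches the right edge of $\beta$, the lowest remaining entry must again be of type $a^2$, and testing the kernel relation at the adjacent lattice point forces $a^2 = 0$, completing the induction. Combining the two lemmas then gives $H_1(\mathcal{A}_\theta^{alg}, {}_\omega \mathcal{A}_\theta^{alg}) = 0$. The case $g = \omega^2$ is symmetric, with the twist $\omega^{-1}$ replacing $\omega$ throughout.
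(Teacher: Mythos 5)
Your plan follows the paper's proof essentially step for step: the same translation of the kernel and image conditions into lattice relations, the same diagrammatic induction on the number of non-zero entries of a kernel solution, the same reduction operator $\wedge$ that subtracts an explicit $b_2$-image supported at a single lattice point, and the same termination argument pushing the leftmost non-zero column to the right edge of the bounding box. One concrete caution: the coordinates you give for $\wedge$ are transplanted from the $\mathbb Z_2$ case and are wrong for the $\omega$-twist. For the $\omega$-twisted module the image diagram of a delta at $(r,s+1)$ hits $a^1$ at $(r,s+2)$ and $(r+1,s)$ and $a^2$ at $(r+1,s+1)$ and $(r,s)$, so the two entries killed by $\wedge$ are $a^2_{r,s}$ and $a^1_{r,s+2}$ (both in the \emph{same} column), not $(r,s)$ and $(r+1,s-1)$; correspondingly the correcting element is $-a^1_{r,s+2}$ placed at $(r,s+1)$, and the four kernel equations to re-verify sit at $(r,s+1)$, $(r+1,s+2)$, $(r+2,s)$, $(r+1,s-1)$ rather than at $(r+1,s\pm1)$, $(r+3,s\pm1)$. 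If you run the induction with the $\mathbb Z_2$ coordinates the subtracted boundary will not cancel the targeted entries, so this geometric re-derivation is not optional bookkeeping but the actual content of the adaptation; once it is done, the rest of your outline matches the paper's argument.
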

\begin{proof}
 From the previous calculations we do have an explicit formula for the kernel and the image equations of $H_1(\mathcal A_\theta^{alg}, {}_{\omega}\mathcal A_\theta^{alg})$.
\begin{center}
$\lambda^{m-1}a^1_{n-1,m-1}-\lambda^{-n}a^1_{n,m}={\lambda}^{0.5-n}a^2_{n-1,m}-a^2_{n,m-2}$ ( Kernel condition ),
\end{center}
\begin{center}
$a^1_{n,m-1}={\lambda}^{1.5-n}a_{{n-1},m}-a_{n.m-2};\text{ } a^2_{n-1,m-1}={\lambda}^{m}a_{n-2,m-1}-{\lambda}^{1-n}a_{{n-1},m}$( Image of $a\in \mathcal A^{alg}_\theta)$.
\end{center}
As before, \emph{a,b,c, and d} are said to be \emph {kernel-connected} if there exists a kernel equation containing them. So, any given kernel solution $A$ we have a diagram associated to it, extended by plotting  all the kernel equations that contain any of its non zero points, which for simplicity we denote by $A$. \newline
e.g. equation $\lambda^{0}a^1_{-1,0}-\lambda^{-0}a^1_{0,1}={\sqrt\lambda}a^2_{-1,1}-a^2_{0,-1}$ is drawn below: 
\begin{center}
\begin{tikzpicture}[->]
\draw (-1,0) -- (-1,1)node[left]{$a^2_{-1,1}$} -- (0,1)node[above, right]{$a^1_{0,1}$}-- (0,-1)node[below]{$a^2_{0,-1}$}--(-1,0)node[below]{$a^1_{-1,0}$};
\node at (0,1) [transition]{};
\node at (-1,0) [transition]{};
\fill (-1,1) circle (2pt);
\fill (0,-1) circle (2pt);
\end{tikzpicture}.
\end{center}

Now, the proof will proceed with induction over the \emph{number} of non-zero elements in a given kernel solution.\par
After going through a simple yet tedious process, one figures out that there are no kernel solution with the number of  non-zero entries less than or equal to 3. This can be seen as follows: \par
Consider three non-zero points $a^2_{n_1,m_1}, a^2_{n_2,m_2} ,a^1_{n_3,m_3}$ constituting a kernel solution. It is easy to check that $m_2 \neq m_1$. Let $m_1 < m_2$, then consider the following equation at $(p,q)=(n_1+1,m_1)$,
\begin{center}
$\lambda^{q-1}a^1_{p-1,q-1}-\lambda^{-p}a^1_{p,q}={\lambda}^{0.5-p}a^2_{p-1,q}-a^2_{p,q-2}$.
\end{center}
 then we see that $m_3=m_1$. Considering the above equation at $(p,q) = (n_2,m_2)$ we get, $n_3=n_2$ and $m_3= m_2-1$. Now finally we consider the above kernel equation at $(p,q)=(n_1-1, m_1+2)$, on which we get that $a^2_{n_1,m_1} = 0$, which is a contradiction.\par
Assume that all kernel solutions having number of  non-zero elements less than or equal to $(x-1)$ come from image. Then consider a kernel solution $S_0$ with $x$ non-zero elements in it. Since this solution is finitely supported over the lattice plane, there exists a closed square region $\beta$  over which $S_0$ is supported.

Inside $\beta$ consider the left most column at least one point of which is not zero. Choose the bottom point $\mu$ of this column. It is clear that $\mu = a^2_{r,s}$ for some $(r,s) \in \mathbb Z^2$. As if it were $a^1_{r,s}$ then consider the following kernel equation.
\begin{center}
$\lambda^{s-1}a^1_{r-1,s-1}-\lambda^{-r}a^1_{r,s}={\lambda}^{0.5-r}a^2_{r-1,s}-a^2_{r,s-2}$.
\end{center}
All but $a^1_{r,s}$ are zero. This is a contradiction.\par
We shall now construct a new solution $S_1$ from $S_0$, with the number of  non-zero elements in $S_1$ at most equal to $x$. Consider the following map $\wedge$
\begin{center}
$\wedge: \mathbb Z^2 \to \mathbb Z^2 , \text{ such that }$
\end{center}
\begin{center}
$(a^1_{r,{s+2}})^\wedge= 0, (a^2_{r,s})^\wedge= 0, (a^1_{{r+1},s})^\wedge=a^1_{{r+1},s} + {\sqrt\lambda}a^2_{r,s}, \newline (a^2_{{r+1},{s+1}})^\wedge= a^2_{{r+1},{s+1}} +{\lambda}^{s+2} a^1_{r,{s+2}}  \text{ and } (a^j_{p,q})^\wedge= a^j_{p,q}$ for all other lattice points. 
\end{center}

\begin{lemma}Let $S_1:=\wedge(S_0)$. Then $S_1$ is a kernel solution such that $|S_1| \leq |S_0|$. If for some $u \in  {}_{\omega}\mathcal A_\theta^{alg}$, $b_2(u)=S_1$ then $b_2(u')=S_0$, where $u' = u+ {}_{r,s}g$,
\begin{center}
${}_{r,s}g_{p,q}=\begin{cases}
-a^1_{r,s+2} &\text{ if } (p,q)=(r,s+1)\\
0 & else. \end{cases}$
\end{center}
\end{lemma}
\begin{proof}
The second part is clear that the cardinality of non-zero entries is reduced by 2 when it is asked that $(a^1_{r,s+2})^\wedge=(a^2_{r,s})^\wedge=0$ but then the cardinality may increase by two in case both $a^1_{r+1,s} \text{ and } a^2_{r+1,s}$ are zero in $S_0$.\par
To check that $S_1$ is a solution it is enough to check that all the kernel equations containing any of these four altered elements hold.
In other words we want to check that the following equation 
\begin{center}
$\lambda^{m}(a^1_{n-1,m})^\wedge-\lambda^{-n}(a^1_{n,m+1})^\wedge={\lambda}^{0.5-n}(a^2_{n-1,m+1})^\wedge-(a^2_{n,m-1})^\wedge$
\end{center}
holds for $(n,m)=(r,s+1), (r+1,s+2), (r+1,s-1) \text{ and } (r+2,s+1)$. \newline
\underline{Case 1: $(n,m)=(r,s+1)$} \newline
$\lambda^{{s+1}}(a^1_{r-1,{s+1}})^\wedge-\lambda^{-r}(a^1_{r,s+2})^\wedge={\lambda}^{0.5-r}(a^2_{r-1,s+2})^\wedge-(a^2_{r,s})^\wedge \newline
\implies \lambda^{{s+1}}(a^1_{r-1,{s+1}})^\wedge = {\lambda}^{0.5-r}(a^2_{r-1,s+2})^\wedge$,
which holds as : $a^1_{r-1,s+1}=a^2_{r-1,s+1}=0$.\newline
\underline{Case 2 : $(n,m)=(r+1,s+2)$} \newline
$\lambda^{{s+2}}(a^1_{r,{s+2}})^\wedge-\lambda^{-1-r}(a^1_{{r+1},s+3})^\wedge-{\lambda}^{-0.5-r}(a^2_{r,s+3})^\wedge+(a^2_{{r+1},{s+1}})^\wedge \newline
 = 0-\lambda^{-1-r}a^1_{r+1,s+3}-{\lambda}^{-0.5-r}a^2_{r,s+3}+ a^2_{{r+1},{s+1}} + {\lambda}^{s+2} a^1_{r,{s+2}}$.\newline
The above equation holds in $S_0$.
\newline
\underline{Case 3 : $(n,m)=(r+2,s)$}\newline
$\lambda^{s}(a^1_{r+1,s})^\wedge-\lambda^{-r-2}(a^1_{r+2,s+1})^\wedge-{\lambda}^{-1.5-r}(a^2_{r+1,s+1})^\wedge+(a^2_{r+2,s-1})^\wedge =0\newline
\implies \lambda^{s}a^1_{r+1,s} +{\lambda}^{s+0.5} a^2_{r,s} -\lambda^{-r-1}a^2_{r+2,s+1}-\lambda^{-0.5-r} a^2_{{r+1},{s+1}} -{\lambda}^{s-r+0.5} a^1_{r,{s+2}} - a^2_{r+2,s-1} =0\newline
\implies {\lambda}^{s+1.5} a^2_{r,s} = {\lambda}^{s-r+1.5} a^1_{r,{s+2}}$ \newline
The above is true if one considers the kernel equation.
\begin{center}
${\lambda}^{s+1}a^1_{r-1,s+1}-{\lambda}^{-r}a^1_{r,s+2}+a^2_{r,s} - \lambda^{0.5-r}a^2_{r-1,s+2}=0$
\end{center}
$a^2_{r-1,s+2}=a^2_{r-1,s+1}=0 \newline
\implies a^2_{r,s} = {\lambda}^{-r}a^1_{r,s+2}$ , which is what we required.
\newline
\underline{Case 4 : $(n,m)=(r+1,s-1)$} \newline
$\lambda^{s-1}(a^1_{r,{s-1}})^\wedge-\lambda^{-1-r}(a^1_{{r+1},s})^\wedge-{\lambda}^{-0.5-r}(a^2_{r,s})^\wedge+(a^2_{{r+1},{s-2}})^\wedge \newline
 =\lambda^{s-1}a^1_{r,{s-1}}-\lambda^{-1-r}a^1_{{r+1},s}-{\lambda}^{-1-r}(\sqrt\lambda a^2_{r,s})-0+a^2_{{r+1},{s-2}}\newline
=\lambda^{s-1}a^1_{r,{s-1}}-\lambda^{-1-r}a^1_{{r+1},s}-{\lambda}^{-0.5-r}a^2_{r,s}+a^2_{{r+1},{s-2}}$.\newline
The above is a kernel equation in $S_0$, and we have shown that $S_1:= \wedge(S_0)$ is a kernel solution. \par
It is easy to see that like kernel equation diagram there is also an image solution diagram indicating how image elements are formed given an element at $(0,0)$,
\begin{center}
\begin{tikzpicture}[->]
\draw (0,-1)-- (0,1)node[right,above,above]{$a^1_{0,1}$} -- (1,0)node[above]{$a^2_{1,0}$}-- (1,-1)node[below]{$a^1_{1,-1}$}--(0,-1)node[below]{$a^2_{0,-1}$};
\node at (1,-1) [transition]{};
\node at (0,1) [transition]{};
\fill (1,0) circle (2pt);
\fill (0,-1) circle (2pt); 
\end{tikzpicture}.
\end{center}
So, it is clear from the diagram as well as from the explicit equations that we have for the map $b_2$, an image element $a_{0,0}$ induces kernel solution elements to its right-below and up for $a^1$, or right and below for $a^2$ elements. Hence if we verify that
\begin{center}
${b_2(u')}_{p,q} = (a^1_{p,q},a^2_{p,q}) \text{ for } (p,q) = (r,s+2), (r,s), (r+1,s+1), (r+1,s)$
\end{center}
then we have proved that $b_2(u') = S_0$. \par
Observe that \newline
$(b_2(u'))_{r,s+2}=(b_2(u+{}_{rs}g))_{r,s+2}\newline=(b_2(u))_{r,s+2} + (b_2({}_{rs}g))_{r,s+2} = ((a^1_{r,s+2})^\wedge,(a^2_{r,s+2})^\wedge)+(a^1_{r,s+2},0) = (a^1_{r,s+2}, a^2_{r,s+2})$ ,\newline
$(b_2(u'))_{r,s}=(b_2(u+{}_{rs}g))_{r,s}\newline=(b_2(u))_{r,s} + (b_2({}_{rs}g))_{r,s} = ((a^1_{r,s})^\wedge,(a^2_{r,s})^\wedge)+(0,\lambda^r a^1_{r,s+2}) = (a^1_{r,s}, a^2_{r,s})$, \newline
$(b_2(u'))_{r+1,s+1}=(b_2(u+{}_{rs}g))_{r+1,s+1}\newline=(b_2(u))_{r+1,s+1} + (b_2({}_{rs}g))_{r+1,s+1} = ((a^1_{r+1,s+1})^\wedge,(a^2_{r+1,s+1})^\wedge)+(0,-\lambda^{s+2}a^1_{r,s+2}) = (a^1_{r+1,s+1}, a^2_{r+1,s+1})$ \newline
and \newline
$(b_2(u'))_{r+1,s}=(b_2(u+{}_{rs}g))_{r+1,s}\newline=(b_2(u))_{r+1,s} + (b_2({}_{rs}g))_{r+1,s} = ((a^1_{r+1,s})^\wedge,(a^1_{r+1,s})^\wedge)+(\lambda^{0.5-r}a^1_{r,s+2},0) = (a^1_{r+1,s}, a^2_{r+1,s})$.
\end{proof}

\begin{lemma}
 $\wedge^N(S_0) = 0$ for any solution $S_0$ for a sufficiently large number $N$.
\end{lemma}
\begin{proof}
The lemma states that $|S_1| \leq |S_0|$ is strict after sufficiently many iterations. Notice that the above process reduces the number of non-zero entries in the left most column by 2. And when this process is iterated this will lead to shifting of the left most non-zero column rightwards. Pictorially if it is assumed that at no stage the iteration reduces the number of non-zero elements, then these many elements are compressed by the shifting of the left most non-zero column towards right. As the right edge of $\beta$ is not changed by this process unless all the non-zero elements are collected on this right edge of $\beta$, all the non-zero elements of $\wedge^d(S_0)$ will at some stage be bounded to the right edge of $\beta$. But, then, consider the lowest element of $\wedge^d(S_0)$ on this edge. It has to be only $a^2_{w,t}$ for some $(w,t)$. Consider the following kernel equation,
\begin{center}
$\lambda^{1+t}a^1_{w-1,t+1}-\lambda^{-w}a^1_{w,t+2}=-a^2_{w,t}+\lambda^{0.5-w}a^2_{w-1,t+1}$ .
\end{center}
This equation has all but $a^2_{w,t}$ as zero, which is a contradiction.

Hence the induction process is complete.
\end{proof}
\end{proof}

\begin{thm}
For $\mathbb Z_3$ action on $\mathcal A_\theta^{alg}$ we have the Hochschild homology groups as follows:
\begin{center}
$HH_0(\mathcal A_\theta^{alg} \rtimes \mathbb Z_3) \cong \mathbb C^7,
HH_1(\mathcal A_\theta^{alg} \rtimes \mathbb Z_3) \cong 0,
HH_2(\mathcal A_\theta^{alg} \rtimes \mathbb Z_3) \cong \mathbb C.$
\end{center}
\end{thm}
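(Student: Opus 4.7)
The plan is to assemble the three required numbers by applying the $\mathbb Z_3$-equivariant spectral decomposition of Theorem 4.1 to $\G = \mathbb Z_3 = \{1,\omega,\omega^2\}$, and then summing the contributions from the three twisted sectors. Concretely, I will invoke the isomorphism
\[
HH_\bullet(\mathcal A_\theta^{alg} \rtimes \mathbb Z_3) \;\cong\; \bigoplus_{t \in \mathbb Z_3} HH_\bullet\bigl((\mathcal A_{\theta, t}^{alg\,\bullet})^{\mathbb Z_3}\bigr),
\]
so that the theorem reduces to adding up the dimensions of three pieces in each homological degree.

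For the untwisted sector $t=1$, Lemma 3.2 identifies $HH_\bullet((\mathcal A_{\theta,1}^{alg})^{\mathbb Z_3})$ with $HH_\bullet(\mathcal A_\theta^{alg})^{\mathbb Z_3}$, and Section 5 (specifically the computations of $H_0^{\mathbb Z_3}$, Theorem 5.1, and $H_2^{\mathbb Z_3}$, each performed by pushing classes through $h_\bullet$, applying the $\mathbb Z_3$ action on the bar complex, and pulling back via $k_\bullet$) yields the dimensions $(1,0,1)$ in degrees $(0,1,2)$. For the two twisted sectors $t = \omega^{\pm 1}$, Theorem 7.1 together with Lemmas 7.2--7.5 gives $HH_0 \cong \mathbb C^3$ and $HH_k = 0$ for $k \geq 1$ in each sector. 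Adding the three contributions in each degree yields $1+3+3 = 7$, $0+0+0 = 0$, and $1+0+0 = 1$, matching the claimed values.

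The proof is essentially a bookkeeping step, exactly parallel to the derivation of Theorem 6.5 from the $\mathbb Z_2$ analogues. The only thing to verify carefully is that the decomposition indexes correctly by $\mathbb Z_3$ elements and that the invariants for $t = \omega$ and $t = \omega^2$ contribute independently; this is automatic since $\mathbb Z_3$ is abelian and the spectral decomposition of Theorem 4.1 is a direct sum over group elements. I do not expect any genuine obstacle: the hard analytical work (the induction on the size of the support of a kernel solution in Lemmas 7.4--7.7, and the invariance computations in Section 5) is already in place, so the final theorem follows immediately by addition.
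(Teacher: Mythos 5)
Your proposal is correct and follows essentially the same route as the paper: the paper's own proof of this theorem is exactly the bookkeeping step you describe, summing the untwisted sector's invariant homology $(1,0,1)$ from the $\G$-invariant section with the two twisted sectors' contributions $(3,0,0)$ each from the $\mathbb Z_3$ section, via the spectral decomposition over group elements. The only discrepancies are in the internal numbering of the cited results, which does not affect the argument.
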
 
\begin{proof}
From the calculations in the previous sections regarding the $\mathbb Z_3$ invariance we know that $HH_0(\mathcal A_\theta^{alg})^{\mathbb Z_3} \cong \mathbb C$ and similarly one can check that $H_0(\mathcal A_\theta^{alg}, {}_{\pm\omega}\mathcal A_\theta^{alg})^{\mathbb Z_3} \cong \mathbb C^3$. Hence we have the following result
\begin{center}
$HH_0(\mathcal A_\theta^{alg} \rtimes \mathbb Z_3) \cong \mathbb C^7$.
\end{center}
\par  As $HH_2(\mathcal A_\theta^{alg})^{\mathbb Z_3} \cong \mathbb C$. And we have $H_2(\mathcal A_\theta^{alg} , {}_{\pm\omega}\mathcal A_\theta^{alg}) \cong 0$, we conclude that 
\begin{center}
$HH_2(\mathcal A_\theta^{alg} \rtimes \mathbb Z_2) \cong \mathbb C$.
\end{center}
Since $HH_1(\mathcal A_\theta^{alg})^{\mathbb Z_3}=H_1(\mathcal A_\theta^{alg}, {}_{\omega^{\pm 1}}\mathcal A_\theta^{alg}) =0$, we obtain that $HH_1(\mathcal A_\theta \rtimes \mathbb Z_3) =0$.
\end{proof}

\subsection{Cyclic homology of  $\mathcal A_\theta^{alg} \rtimes \mathbb Z_3$}.\newline
\begin{thm}
$HC_{even}(\mathcal A_{\theta, \omega^{\pm 1}}^{alg}) \cong \mathbb C^3$, while 
$HC_{odd}(\mathcal A_{\theta, \omega^{\pm 1}}^{alg}) = 0.$
\end{thm}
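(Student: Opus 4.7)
The plan is to mimic step by step the proof of Theorem 7.6 for $\mathbb Z_2$, this time feeding the Hochschild inputs from Theorem 8.1 into Connes' $SBI$ long exact sequence. Since the generator of $\mathbb Z_3$ commutes with the twisted differential ${}_{\omega^{\pm 1}}b$, the sequence descends to $\mathbb Z_3$-invariants, giving
\begin{center}
$\cdots \xrightarrow{B} (HH_n(\mathcal A^{alg}_{\theta,\omega^{\pm 1}}))^{\mathbb Z_3} \xrightarrow{I} (HC_n(\mathcal A^{alg}_{\theta,\omega^{\pm 1}}))^{\mathbb Z_3} \xrightarrow{S} (HC_{n-2}(\mathcal A^{alg}_{\theta,\omega^{\pm 1}}))^{\mathbb Z_3} \xrightarrow{B} (HH_{n-1}(\mathcal A^{alg}_{\theta,\omega^{\pm 1}}))^{\mathbb Z_3} \xrightarrow{I} \cdots$
\end{center}
Theorem 8.1 supplies the inputs $(HH_0)^{\mathbb Z_3} \cong \mathbb C^3$ and $(HH_k)^{\mathbb Z_3} = 0$ for $k \geq 1$.

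First I would read off the base cases using $HC_{-1} = 0$: the sequence at $n = 0$ collapses to $(HC_0)^{\mathbb Z_3} \cong (HH_0)^{\mathbb Z_3} \cong \mathbb C^3$ (which matches the standard identity $HH_0 = HC_0$), and at $n = 1$ it sandwiches $(HC_1)^{\mathbb Z_3}$ between the zero groups $(HH_1)^{\mathbb Z_3}$ and $HC_{-1}$, forcing $(HC_1)^{\mathbb Z_3} = 0$. Next, for every $n \geq 2$ both flanking Hochschild groups $(HH_n)^{\mathbb Z_3}$ and $(HH_{n-1})^{\mathbb Z_3}$ vanish, so the periodicity map $S\colon (HC_n)^{\mathbb Z_3} \to (HC_{n-2})^{\mathbb Z_3}$ is an isomorphism. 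Iterating $S$ from the base cases gives $(HC_{2k})^{\mathbb Z_3} \cong \mathbb C^3$ and $(HC_{2k+1})^{\mathbb Z_3} = 0$ for all $k \geq 0$, and passing to the periodic limit yields $HC_{even} \cong \mathbb C^3$ and $HC_{odd} = 0$ as claimed.

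The one additional point worth flagging is that the argument must be run symmetrically for both twisted modules ${}_{\omega}\mathcal A_\theta^{alg}$ and ${}_{\omega^{-1}}\mathcal A_\theta^{alg}$; but since Theorem 8.1 already treats both uniformly, the same SBI bookkeeping applies verbatim to each. There is no genuine obstacle here: the whole difficulty was absorbed into the kernel-diagram induction establishing $HH_1 = HH_2 = 0$ for the twisted modules earlier in this section, and once those vanishings are in hand the cyclic computation is a purely formal consequence of the long exact sequence, with no new resolutions, estimates, or diagram manipulations required.
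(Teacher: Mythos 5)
Your proposal is correct and follows essentially the same route as the paper: both feed the vanishing of $(HH_k)^{\mathbb Z_3}$ for $k\geq 1$ and the identification $(HC_0)^{\mathbb Z_3}=(HH_0)^{\mathbb Z_3}\cong\mathbb C^3$ into the $\mathbb Z_3$-invariant $S,B,I$ long exact sequence, so that $S$ becomes an isomorphism in all degrees $\geq 2$ and the even/odd periodic groups are read off from the base cases. Your write-up is in fact slightly more careful than the paper's in making the induction on $n$ and the $HC_{-1}=0$ base case explicit, but there is no substantive difference in method.
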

\begin{proof}
We apply the $S,B,I$ long exact sequence relating the Hochschild and cyclic homology of an algebra $A$,
\begin{center}
$... \xrightarrow{B} HH_n(A) \xrightarrow{I} HC_n(A) \xrightarrow{S} HC_{n-2}(A) \xrightarrow{B} HH_{n-1}(A) \xrightarrow{I}...$.\\
\end{center}
Since the $\mathbb Z_3$ action on $\mathcal A_{\theta, \omega^{\pm 1}}^{alg}$ commutes with the map ${}_{\omega^{\pm 1}}b$ , we obtain the following exact sequence
\begin{center}
$... \xrightarrow{B} (HH_n(\mathcal A^{alg}_{\theta,\omega^{\pm 1}}))^{\mathbb Z_3} \xrightarrow{I} (HC_n(\mathcal A^{alg}_{\theta,\omega^{\pm 1}}))^{\mathbb Z_3} \xrightarrow{S} (HC_{n-2}(\mathcal A^{alg}_{\theta, \omega^{\pm 1}}))^{\mathbb Z_3} \xrightarrow{B} (HH_{n-1}(\mathcal A^{alg}_{\theta,\omega^{\pm 1}}))^{\mathbb Z_3} \xrightarrow{I}...$.\\
\end{center}
\par
We know that $(HH_2(\mathcal A^{alg}_{\theta,\omega^{\pm 1}}))=(HH_1(\mathcal A^{alg}_{\theta,\omega^{\pm 1}}))=0$. Hence we obtain that
\begin{center}
$(HC_2(\mathcal A^{alg}_{\theta,\omega^{\pm 1}}))^{\mathbb Z_3} \cong (HC_0(\mathcal A^{alg}_{\theta, \omega^{\pm 1}}))^{\mathbb Z_3}$.
\end{center}
But, a preliminary result shows that $HH_0(\mathcal A_{\theta,\omega^{\pm 1}}^{alg})=HC_0(\mathcal A_{\theta, \omega^{\pm 1}}^{alg})$. Hence, we obtain that
\begin{center}
 $(HC_2(\mathcal A^{alg}_{\theta,\omega^{\pm 1}}))^{\mathbb Z_3}=(HC_0(\mathcal A^{alg}_{\theta,\omega^{\pm 1}}))^{\mathbb Z_3} \cong \mathbb C^3$.
\end{center}
Also, since $(HH_2(\mathcal A^{alg}_{\theta,\omega^{\pm 1}}))^{\mathbb Z_3}=(HH_3(\mathcal A^{alg}_{\theta,\omega^{\pm 1}}))^{\mathbb Z_3}=0$, we obtain that 
\begin{center}$(HC_3(\mathcal A^{alg}_{\theta,\omega^{\pm 1}}))^{\mathbb Z_3}\cong (HC_1(\mathcal A^{alg}_{\theta,\omega^{\pm 1}}))^{\mathbb Z_3}$.
\end{center} \par
Since $HH_1(\mathcal A_{\theta,\omega^{\pm 1}}^{alg})$ is trivial, we have 
\begin{center}
$(HC_3(\mathcal A^{alg}_{\theta,\omega^{\pm 1}}))^{\mathbb Z_3}\cong (HC_1(\mathcal A^{alg}_{\theta,\omega^{\pm 1}}))^{\mathbb Z_3}=0$.
\end{center}
\end{proof}

\subsection{Periodic cyclic homology}.\newline
\begin{thm}
$HC_{even}(\mathcal A_\theta^{alg} \rtimes \mathbb Z_3) \cong \mathbb C^8 \text{ while } HC_{odd}(\mathcal A_\theta^{alg} \rtimes \mathbb Z_3) = 0$.
\end{thm}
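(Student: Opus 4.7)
The plan is to apply the spectral decomposition of Theorem 4.1 to split the periodic cyclic homology as a direct sum over the elements $t \in \mathbb Z_3 = \{1, \omega, \omega^2\}$, and then assemble the three summands from results already in hand. The two twisted summands, indexed by $t = \omega$ and $t = \omega^2$, are supplied directly by Theorem 6.6 and together contribute $\mathbb C^3 \oplus \mathbb C^3 \cong \mathbb C^6$ in even degrees and $0$ in odd degrees. The only remaining work is to compute the $t = 1$ contribution, namely the $\mathbb Z_3$-invariant periodic cyclic homology of $\mathcal A_\theta^{alg}$ itself.

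For the $t = 1$ summand I would run the Connes $S,B,I$ long exact sequence on the $\mathbb Z_3$-invariant subcomplex, using the fact that the $\mathbb Z_3$-action commutes with $b$ and $B$ (the same principle invoked in the proof of Theorem 6.8 for $\mathbb Z_2$). The input from Section 5 is $HH_0(\mathcal A_\theta^{alg})^{\mathbb Z_3} \cong \mathbb C$, $HH_1(\mathcal A_\theta^{alg})^{\mathbb Z_3} = 0$, $HH_2(\mathcal A_\theta^{alg})^{\mathbb Z_3} \cong \mathbb C$, and $HH_k = 0$ for $k \geq 3$ (since the Koszul resolution of Lemma 4.1 has length two). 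Starting from $HC_0^{\mathbb Z_3} = HH_0^{\mathbb Z_3} \cong \mathbb C$, the sequence forces $HC_1^{\mathbb Z_3} = 0$; next the short exact piece $0 \to HH_2^{\mathbb Z_3} \to HC_2^{\mathbb Z_3} \to HC_0^{\mathbb Z_3} \to 0$ gives $HC_2^{\mathbb Z_3} \cong \mathbb C^2$; and from degree three onward the periodicity map $S$ is an isomorphism because $HH_k^{\mathbb Z_3}$ vanishes for $k \geq 3$. Thus in the stable (periodic) range the $t = 1$ piece contributes $\mathbb C^2$ in even degrees and $0$ in odd degrees.

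Adding the three components yields $\mathbb C^2 \oplus \mathbb C^3 \oplus \mathbb C^3 \cong \mathbb C^8$ in even degrees and $0$ in odd degrees, which matches the claim. I do not expect a serious obstacle: the spectral decomposition is used as a black box from Theorem 4.1, the two twisted summands are packaged in Theorem 6.6, and $S,B,I$ on the untwisted invariant complex is mechanical once the Hochschild invariants are known. The only subtlety worth verifying carefully is that the $S,B,I$ sequence restricts cleanly to the $\mathbb Z_3$-invariant subcomplex, which is automatic from the $\mathbb Z_3$-equivariance of $b$ and $B$ together with exactness of taking invariants (since $|\mathbb Z_3|$ is invertible in $\mathbb C$).
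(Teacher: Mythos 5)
Your proposal is correct and follows essentially the same route as the paper: decompose via the paracyclic/spectral decomposition into the $t=1,\omega,\omega^2$ summands, take the two twisted summands ($\mathbb C^3$ each in even degree, $0$ in odd) from the preceding cyclic-homology theorem, and run the $\mathbb Z_3$-equivariant $S,B,I$ sequence on the untwisted invariant part to get $\mathbb C^2$ in even degree and $0$ in odd, summing to $\mathbb C^8$ and $0$. Your write-up is in fact more explicit than the paper's about why $I\colon HH_2^{\mathbb Z_3}\to HC_2^{\mathbb Z_3}$ is injective and why $S$ stabilizes above degree two, but the underlying argument is identical.
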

\begin{proof}
We have
\begin{center}
$... \xrightarrow{B} (HH_2(\mathcal A^{alg}_{\theta}))^{\mathbb Z_3} \xrightarrow{I} (HC_2(\mathcal A^{alg}_{\theta}))^{\mathbb Z_3} \xrightarrow{S} (HC_{0}(\mathcal A^{alg}_{\theta}))^{\mathbb Z_3} \xrightarrow{B} (HH_{1}(\mathcal A^{alg}_{\theta,}))^{\mathbb Z_3} \xrightarrow{I}...$.
\end{center}
Since $HH_2(\mathcal A_\theta^{alg})^{\mathbb Z_3} \cong HC_0(\mathcal A_\theta^{alg})^{\mathbb Z_3} \cong \mathbb C$. Also we have the following isomorphism of the homology groups $(HC_{even}(\mathcal A_{\theta, \omega^{\pm 1}}^{alg}))^{\mathbb Z_3} \cong (HC_{2}(\mathcal A_{\theta, \omega^{\pm 1}}^{alg}))^{\mathbb Z_3} \cong \mathbb C^3$. Hence we conclude that 
\begin{center}
$HC_{even}(\mathcal A_\theta^{alg} \rtimes \mathbb Z_3) \cong \mathbb C^{8}$.
\end{center}
As for the odd cyclic homology, we have $(HC_3(\mathcal A^{alg}_{\theta,\omega^{\pm 1}}))^{\mathbb Z_3}\cong (HC_1(\mathcal A^{alg}_{\theta,\omega^{\pm 1}}))^{\mathbb Z_3}=0$, and we also have $HC_3(\mathcal A_\theta^{alg})^{\mathbb Z_3} = HC_1(\mathcal A_\theta^{alg})^{\mathbb Z_3}=HH_1(\mathcal A_\theta^{alg})^{\mathbb Z_3} = 0$. Combining these two results, we obtain that 
\begin{center}
$HC_{odd}(\mathcal A_\theta^{alg} \rtimes \mathbb Z_3) = 0$.
\end{center}
\par So we have computed the Hochschild and cyclic homology of the $\mathbb Z_3$ orbifold.
\end{proof}

\section{$\mathbb Z_4$ action on $\mathcal A_\theta^{alg}$}
The group $\mathbb Z_4$ is embedded in $SL(2,\mathbb Z)$ through its generator $g= \left[
 \begin{array}{cc}
   0 & -1 \\
   -1& 0
 \end{array} \right]\in SL(2,\mathbb Z)$. The generator acts of $\mathcal A_\theta^{alg}$ in the following way
\begin{center}
$U_1 \mapsto U_2^{-1}, U_2 \mapsto U_1$.
\end{center}
\subsection{Hochschild homology}.\newline
\begin{thm}
Let $i$ be the generator of $\mathbb Z_4$, we have $HH_0((\mathcal {A}_{\theta, {\pm i}}^{alg \bullet})^{\mathbb {Z}_4}) \cong \mathbb C^2$, while \newline
$HH_k((\mathcal {A}_{\theta, {\pm i}}^{alg \bullet})^{\mathbb {Z}_4})$  are trivial groups for $k \geq 1$.
\end{thm}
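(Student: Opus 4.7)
The plan is to follow verbatim the template used in the $\mathbb{Z}_2$ and $\mathbb{Z}_3$ sections, with all coefficients recomputed for the $\mathbb{Z}_4$-twist coming from the generator $g$ that sends $U_1 \mapsto U_2^{-1}$, $U_2 \mapsto U_1$. First I would state an analogue of Lemma 7.2/8.2, introducing the twisted bicomplex $J_{\ast,\pm i}^{\mathbb{Z}_4}$ and the twisted bimodule structure $a\cdot\alpha = (\pm i \cdot a)\alpha$, $\alpha\cdot a = \alpha a$, so that ${}_{\pm i} b$ is identified with the ordinary Hochschild differential on $C_\bullet(\mathcal{A}_\theta^{alg}, {}_{\pm i}\mathcal{A}_\theta^{alg})$, and then invoke Lemma 3.2 to commute $\mathbb{Z}_4$-invariants past homology. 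I shall treat only the case $g=i$ in detail; the $g=-i$ case is identical mutatis mutandis. This reduces the entire theorem to computing three groups on the small Koszul complex from Lemma 4.1.

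Next I would compute $H_2(\mathcal{A}_\theta^{alg}, {}_{i}\mathcal{A}_\theta^{alg})=0$ directly. Using the identification $a\otimes_{\mathfrak{B}_\theta^{alg}}(X\otimes Y) = Y\cdot a\cdot X$ with twisted left action, the differential becomes $(1\otimes b_2)(a\otimes I)=(aU_2-\lambda U_1 a,\; U_2^{-1}a-\lambda aU_1)$. Writing $a=\sum a_{n,m}U_1^nU_2^m$, the kernel conditions read $a_{n,m-1}=\lambda a_{n-1,m}$ and $a_{n,m+1}=\lambda^{n+m+1}a_{n-1,m}$. Eliminating between the two forces $a_{n,m+1}=\lambda^{n+m}a_{n,m-1}$, which together with the original translation propagates any nonzero coefficient infinitely along a diagonal; finite support then forces $a\equiv 0$. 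For $H_0$ I compute $b_1(U_1^nU_2^m,0)=\lambda^m U_1^{n+1}U_2^m-\lambda^{-n}U_1^nU_2^{m-1}$ and $b_1(0,U_1^nU_2^m)=U_1^nU_2^{m+1}-U_1^{n+1}U_2^m$. The second relation collapses each anti-diagonal $\{n+m=k\}$ to a single class $c_k$, and the first then yields $c_{k+1}=\lambda^{-k}c_{k-1}$, splitting the quotient into two free generators $c_0=[1]$ and $c_1=[U_2]$. Since $h_0=k_0=\mathrm{id}$ the $\mathbb{Z}_4$-action on the bar complex descends unchanged, and $i\cdot[1]=[1]$, $i\cdot[U_2]=[U_1]=[U_2]$, giving $H_0(\mathcal{A}_\theta^{alg}, {}_{i}\mathcal{A}_\theta^{alg})^{\mathbb{Z}_4}\cong\mathbb{C}^2$.

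The main obstacle is showing $H_1(\mathcal{A}_\theta^{alg}, {}_i\mathcal{A}_\theta^{alg})=0$. I plan to reproduce the kernel-diagram induction from the $\mathbb{Z}_2$ and $\mathbb{Z}_3$ sections. Concretely, the kernel equation becomes
\begin{equation*}
\lambda^{-n}a^1_{n,m+1}-\lambda^m a^1_{n-1,m}+a^2_{n-1,m}-a^2_{n,m-1}=0,
\end{equation*}
and the image of an element $\varphi$ of the Koszul term $\mathcal{A}_\theta^{alg}$ is $(\varphi_{n,m-1}-\lambda\varphi_{n-1,m},\;\lambda^{-n}\varphi_{n,m+1}-\lambda^{m+1}\varphi_{n-1,m})$. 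I would assign each equation a four-point diagram on $\mathbb{Z}^2$ (two $a^1$-boxes at $(n-1,m),(n,m+1)$ and two $a^2$-dots at $(n-1,m),(n,m-1)$) and verify first that no kernel solution exists with $\leq 3$ nonzero entries (by shifting the equation over a hypothetical minimal support and deriving a contradiction). Then, inducting on the number of nonzero entries of a solution $S_0$, choose the bottom-most point $\mu$ of the leftmost nonzero column, argue it must be an $a^2$-type point (else a single-nonzero-entry equation forces $\mu=0$), and define a reduction map $\wedge$ whose explicit four-point correction is tailored to the $\mathbb{Z}_4$-twisted $b_2$-image pattern at that corner. After verifying that $\wedge(S_0)$ is again a kernel solution (by checking the four affected kernel equations case by case), that $|\wedge(S_0)|\le|S_0|$, and that reattaching an explicit ${}_{r,s}g$ recovers $S_0$ as a genuine $b_2$-boundary, a final lemma iterates $\wedge$ until the support is squeezed against the right edge of a bounding box, where only a single surviving $a^2_{w,t}$ must appear and is shown to be killed by its own kernel equation. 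The tight part is picking the shift constants in $\wedge$ correctly so that all four corrected kernel equations balance; but these are forced by the same auxiliary relation $a^2_{r,s}=\lambda^{-r}a^1_{r,\cdot}$ that worked in the $\mathbb{Z}_3$ case, now adapted to the coefficient structure above.

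Combining the three computations with Theorem 4.1 gives the stated result, and the pattern $H_0\cong\mathbb{C}^2$, $H_1=H_2=0$ for both $g=i$ and $g=-i$ follows.
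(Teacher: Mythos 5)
Your proposal follows essentially the same route as the paper: the same reduction via the twisted complex and Lemma 3.2, the same direct Koszul computations showing $H_2=0$ (finite support versus coefficient propagation) and $H_0^{\mathbb{Z}_4}\cong\mathbb{C}^2$ (with generators $[1]$ and $[U_1]=[U_2]$, using $h_0=k_0=\mathrm{id}$), and the same kernel-diagram induction with the reduction map $\wedge$ for $H_1=0$. Apart from immaterial differences in the exact powers of $\lambda$ in the kernel and image equations, this is the paper's argument.
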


\begin{lemma}
Consider the following chain complex $J_{\ast, \pm i}^{\mathbb Z_4}$
\begin{center}
$J_{\ast ,  \pm i}^{\mathbb Z_4} := 0 \xleftarrow{{}_{ \pm i}b} (\mathcal A^{alg}_{\theta, \pm i})^{\mathbb Z_4} \xleftarrow{{}_{ \pm i}b} ((\mathcal A^{alg}_{\theta,  \pm i})^{\otimes 2})^{\mathbb Z_4} \xleftarrow{{}_{ \pm i}b} ((\mathcal A^{alg}_{\theta,  \pm i})^{\otimes 3})^{\mathbb Z_4}...$
\end{center}
where,
\begin{center}
${}_{ \pm i}b(a_0\otimes a_1\otimes....\otimes a_n)= b'(a_0\otimes a_1\otimes....\otimes a_n)+(-1)^n(( \mp i\cdot a_n)a_0\otimes a_1\otimes....\otimes a_{n-1})$.
\end{center}
Then,
\begin{center}
$H_\bullet(J_{\ast , \pm i}^{\mathbb Z_4}, {}_ {\pm i}b) = (H_\bullet(J_\ast(\mathcal A_\theta^{alg}, {}_{ \pm i}\mathcal A_\theta^{alg})), b)^{\mathbb Z_4}$.
\end{center}
\end{lemma}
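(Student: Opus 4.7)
The plan is to imitate the proof of Lemma 6.2 almost verbatim, since the statement is the exact $\mathbb{Z}_4$ analogue of the $\mathbb{Z}_2$ version already proved. The whole point is that the twist term $(\mp i\cdot a_n)a_0$ appearing in ${}_{\pm i}b$ is not really an exotic operator; it is just the last term of the ordinary Hochschild differential $b$ of $\mathcal{A}_\theta^{alg}$ with coefficients in a suitably twisted bimodule.

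The first step will be to make that reinterpretation precise. I would define the twisted bimodule ${}_{\pm i}\mathcal{A}_\theta^{alg}$ as the algebra $\mathcal{A}_\theta^{alg}$ viewed as an $\mathcal{A}_\theta^{alg}$-bimodule with right action unchanged and left action twisted by the generator, that is
\begin{equation*}
a \cdot \alpha := (\mp i\cdot a)\alpha, \qquad \alpha \cdot a := \alpha a,
\end{equation*}
for $a\in\mathcal{A}_\theta^{alg}$ and $\alpha\in{}_{\pm i}\mathcal{A}_\theta^{alg}$. A direct unwinding of the definition of the Hochschild boundary on $C_\ast(\mathcal{A}_\theta^{alg},{}_{\pm i}\mathcal{A}_\theta^{alg})$ then shows that the last face map produces exactly the $(-1)^n(\mp i\cdot a_n)a_0\otimes a_1\otimes\cdots\otimes a_{n-1}$ term, so that ${}_{\pm i}b$ agrees on the nose with the standard Hochschild differential $b$ on this twisted complex. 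This yields
\begin{equation*}
H_\bullet\bigl(J_{\ast,\pm i}^{\mathbb{Z}_4},\, {}_{\pm i}b\bigr) \;=\; H_\bullet\Bigl(\bigl(C_\ast(\mathcal{A}_\theta^{alg},{}_{\pm i}\mathcal{A}_\theta^{alg})\bigr)^{\mathbb{Z}_4},\,b\Bigr).
\end{equation*}

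Next, I would invoke Lemma 3.2 to commute $\mathbb{Z}_4$-invariants with homology. For this one must check two points: (i) the $\mathbb{Z}_4$ action on each tensor power $(\mathcal{A}_\theta^{alg})^{\otimes(n+1)}$ (diagonal action, with the last slot carrying the twisted module structure) commutes with $b$; and (ii) $|\mathbb{Z}_4|=4$ is invertible in our ground field $\mathbb{C}$, so Maschke-type averaging works and $(-)^{\mathbb{Z}_4}$ is exact on $\mathbb{C}[\mathbb{Z}_4]$-modules. Point (ii) is immediate. For point (i), the compatibility of the diagonal $\mathbb{Z}_4$ action with all face maps $d_j$ is routine because the twist $\mp i$ is precisely the one needed for the cyclic-type wrap-around, and the action is implemented by an algebra automorphism of $\mathcal{A}_\theta^{alg}$. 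Once both hypotheses are verified, Lemma 3.2 gives
\begin{equation*}
H_\bullet\Bigl(\bigl(C_\ast(\mathcal{A}_\theta^{alg},{}_{\pm i}\mathcal{A}_\theta^{alg})\bigr)^{\mathbb{Z}_4},\,b\Bigr) \;=\; \bigl(H_\bullet(C_\ast(\mathcal{A}_\theta^{alg},{}_{\pm i}\mathcal{A}_\theta^{alg}),\,b)\bigr)^{\mathbb{Z}_4},
\end{equation*}
which is the desired conclusion after identifying the bar complex homology with $H_\bullet(J_\ast(\mathcal{A}_\theta^{alg},{}_{\pm i}\mathcal{A}_\theta^{alg}))$ via the projective resolution of Lemma 4.1.

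There is no serious obstacle here; the content is entirely formal. The only step requiring any care is the bookkeeping check that the diagonal $\mathbb{Z}_4$ action truly commutes with the twisted differential ${}_{\pm i}b$, that is, that the generator of $\mathbb{Z}_4$ transported through the cyclic wrap-around lands correctly on the twisted factor. This is the same verification made implicitly in the $\mathbb{Z}_2$ case (Lemma 6.2); the sign/root-of-unity convention $\mp i$ for the element $\pm i\in\mathbb{Z}_4$ was chosen precisely so that this compatibility holds, so the computation is essentially automatic once one writes out one face-map explicitly.
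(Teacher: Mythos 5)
Your proposal is correct and follows essentially the same route the paper takes for the corresponding $\mathbb{Z}_2$ statement (Lemma 6.2), whose proof is the template here: absorb the wrap-around term $(\mp i\cdot a_n)a_0$ into a twisted left module structure on ${}_{\pm i}\mathcal{A}_\theta^{alg}$ so that ${}_{\pm i}b$ becomes the ordinary Hochschild differential, then apply Lemma 3.2 to exchange $\mathbb{Z}_4$-invariants with homology using the invertibility of $|\mathbb{Z}_4|$ in $\mathbb{C}$. No substantive difference from the paper's argument.
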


Hence using the adjusted Connes' complex for the algebraic case we can now calculate the homology groups.
\begin{center}
\begin{enumerate}
\item[$\bullet$]$H_0(\mathcal A_\theta^{alg},{}_{ \pm i}\mathcal A^{alg}_\theta)= {}_{\pm i}\mathcal A^{alg}_\theta\otimes_{\mathfrak B_\theta^{alg}} \mathfrak B_\theta^{alg} / Image(1\otimes b_1),$
\item[$\bullet$]$H_1(\mathcal A_\theta^{alg},{}_{ \pm i}\mathcal A^{alg}_\theta)= Ker(1\otimes b_1) / Image(1\otimes b_2),$
\item[$\bullet$]$H_2(\mathcal A_\theta^{alg},{}_{\pm i}\mathcal A^{alg}_\theta)= Ker(1\otimes b_2)$.
\end{enumerate}
\end{center}

\begin{lemma} $H_2(\mathcal A_\theta^{alg},{}_{i}\mathcal A^{alg}_\theta) \cong 0.$
\end{lemma}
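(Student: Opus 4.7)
The plan is to reprise the strategy used in Lemma 6.3 and Lemma 7.3. First I would compute the differential $1\otimes b_2$ explicitly using the twisted bimodule structure of ${}_{i}\mathcal A_\theta^{alg}$: the left $\mathcal A_\theta^{alg}$-action is twisted by the generator $i$ of $\mathbb Z_4$, for which $i\cdot U_1=U_2^{-1}$ and $i\cdot U_2=U_1$, while the right action is unchanged. Substituting this into
\[
(1\otimes b_2)(a\otimes I)=a\otimes_{\mathfrak B_\theta^{alg}}(U_2\otimes I-\lambda\otimes U_2)\otimes e_1-a\otimes_{\mathfrak B_\theta^{alg}}(\lambda U_1\otimes I-I\otimes U_1)\otimes e_2
\]
gives
\[
(1\otimes b_2)(a\otimes I)=(aU_2-\lambda U_1 a,\ U_2^{-1}a-\lambda a U_1).
\]

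Next I would write $a=\sum a_{n,m}U_1^n U_2^m$ and, using the normal ordering relation $U_2^s U_1^r=\lambda^{sr}U_1^r U_2^s$, expand the vanishing of each component into an equation at every lattice site $(n,m)$. This produces the coupled pair of recurrences
\[
a_{n,m-1}=\lambda\, a_{n-1,m},\qquad a_{n,m+1}=\lambda^{n+m+1}\, a_{n-1,m}.
\]
Eliminating $a_{n-1,m}$ between the two yields the column-wise identity $a_{n,m+2}=\lambda^{n+m+1}a_{n,m}$, which is the key relation.

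The final step exploits finite support: if some coefficient $a_{n_0,m_0}$ were nonzero, this identity would propagate nonvanishing values through the entire arithmetic progression $\{(n_0,m_0+2k):k\in\mathbb Z\}$ (since $\lambda\ne 0$), contradicting $a\in\mathcal A_\theta^{alg}$. Hence the kernel of $1\otimes b_2$ is trivial and $H_2(\mathcal A_\theta^{alg},{}_{i}\mathcal A_\theta^{alg})=0$. The case $g=-i$ is handled by exactly the same computation with $i$ replaced by $i^{-1}$, so the lemma in the form stated (covering both $\pm i$) follows at once.

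I do not anticipate any real obstacle: the argument is a direct mimic of the $\mathbb Z_2$ and $\mathbb Z_3$ analyses, and the only point that requires care is the bookkeeping of $\lambda$-powers when commuting $U_1$ past $U_2^{\pm 1}$ inside $aU_2-\lambda U_1 a$ and $U_2^{-1}a-\lambda aU_1$. Once the two recurrences are extracted correctly, the finite-support conclusion is automatic.
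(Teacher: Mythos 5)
Your proposal is correct and follows essentially the same route as the paper: compute the twisted differential $(1\otimes b_2)(a\otimes I)=(aU_2-\lambda U_1a,\ U_2^{-1}a-\lambda aU_1)$, extract the two coefficient recurrences, eliminate to see that $a_{n,m}$ and $a_{n,m+2}$ differ by a nonzero scalar, and invoke finite support. The precise $\lambda$-exponents you record differ slightly from the paper's (which writes the conditions as $\lambda a_{n-1,m}=a_{n,m-1}$ and $\lambda^{m-n}a_{n-1,m}=\lambda^2 a_{n,m+1}$, concluding $|a_{n,m}|=|a_{n,m+2}|$), but since only unit multiples are involved this does not affect the argument.
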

\begin{proof}
We prove here for case $g= i$, the proof for $g=-i$ is similar. Consider the map $(1 \otimes b_2)$ in the tensor complex. To calculate the kernel of this map we have a closer look at this map.
\begin{center}
$(1\otimes b_2)(a\otimes I)=a\otimes_{\mathfrak B_\theta^{alg}}(U_2\otimes I-\lambda\otimes U_2)\otimes e_1-a\otimes_{\mathfrak B_\theta^{alg}}(\lambda U_1\otimes I-I\otimes U_1)\otimes e_2$
\end{center}
Using the twisted bimodule structure of ${}_{ i}\mathcal A_\theta^{alg}$ over $\mathcal A_\theta^{alg}$, we simplify the above equation to the following,
\begin{center}
$(1\otimes b_2)(a\otimes I)= (a U_2-\lambda U_1a , U_2^{-1}a-\lambda aU_1)$.
\end{center}
Hence we obtain the following relation over an element $(a \otimes 1)$ to reside in $ker( 1\otimes b_2)$. 
\begin{center}
$H_2(\mathcal A_\theta^{alg},{}_{\omega}\mathcal A^{alg}_\theta)= \left\lbrace a\in {}_{i}\mathcal A_\theta^{alg} | \lambda a_{n-1,m}=a_{n,m-1}; {\lambda}^{m-n}a_{n-1,m}=\lambda^2 a_{n,m+1}\right\rbrace$ .
\end{center}
Since, no such nontrivial element exists in ${}_{i}\mathcal A_\theta^{alg}$ because if it did then $|a _{n,m}|=|a_{n,m+2}|$. So, we have the desired result,
\begin{center}
$H_2(\mathcal A_\theta^{alg},{}_{i}\mathcal A^{alg}_\theta)=0$.
\end{center}
\end{proof}
\begin{lemma} $H_0(\mathcal A_\theta^{alg},{}_{i}\mathcal A^{alg}_\theta)^{\mathbb Z_4} \cong \mathbb C^2.$
\end{lemma}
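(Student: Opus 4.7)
The plan is to follow the same pattern established in the $\mathbb{Z}_2$ and $\mathbb{Z}_3$ cases earlier in the paper. Since $h_0=k_0=\mathrm{id}$, the $\mathbb{Z}_4$ action on $H_0$ is the natural action on $\mathcal{A}_\theta^{alg}$ modulo the image of $1\otimes b_1$, so I would first identify $H_0(\mathcal A_\theta^{alg},{}_{i}\mathcal A^{alg}_\theta)$ as a quotient and then check invariance directly on representatives.

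First I would unpack the differential using the twisted bimodule structure, where the left action is $a\cdot\alpha=(i\cdot a)\,\alpha$. With $i\cdot U_1=U_2^{-1}$ and $i\cdot U_2=U_1$, this gives
\begin{center}
$b_1(a_1,0)=a_1U_1-U_2^{-1}a_1$ and $b_1(0,a_2)=a_2U_2-U_1a_2$.
\end{center}
Evaluating on monomials yields
\begin{center}
$b_1(U_1^nU_2^m,0)=\lambda^{-m}U_1^{n+1}U_2^m-\lambda^n U_1^nU_2^{m-1}$,\\
$b_1(0,U_1^nU_2^m)=U_1^nU_2^{m+1}-U_1^{n+1}U_2^m$.
\end{center}

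Next I would read off the two families of relations imposed in the quotient. The second relation shows that the class $[U_1^nU_2^m]$ depends only on $k=n+m$, so write $c_k:=[U_1^nU_2^m]$. The first relation then becomes $c_{k+1}=\lambda^k c_{k-1}$ for every $k\in\mathbb Z$. Iterating, every even-index class is a nonzero scalar multiple of $c_0$ and every odd-index class is a nonzero scalar multiple of $c_1$, while $c_0$ and $c_1$ are clearly linearly independent (no relation forces them to vanish). Hence $H_0(\mathcal A_\theta^{alg},{}_{i}\mathcal A^{alg}_\theta)\cong\mathbb C^2$ with basis $\{[1],[U_1]\}$.

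Finally I would verify $\mathbb{Z}_4$-invariance of this basis. Because $h_0=k_0=\mathrm{id}$ the generator $i$ acts on the quotient by $[a]\mapsto[i\cdot a]$. Clearly $i\cdot[1]=[1]$. For $[U_1]$, we compute $i\cdot[U_1]=[U_2^{-1}]=c_{-1}$, and from the relation $c_1=\lambda^0c_{-1}=c_{-1}$ this equals $[U_1]$. Thus both generators are $\mathbb{Z}_4$-fixed, giving $H_0(\mathcal A_\theta^{alg},{}_{i}\mathcal A^{alg}_\theta)^{\mathbb Z_4}\cong\mathbb C^2$. The only potentially delicate point is ruling out hidden relations that could collapse $c_0$ and $c_1$ onto each other; this is handled by observing that the two families of relations preserve the parity of $k=n+m$, so the even and odd sectors never mix.
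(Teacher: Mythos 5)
Your proposal is correct and follows essentially the same route as the paper: compute the twisted differential $b_1(a_1,0)=a_1U_1-U_2^{-1}a_1$, $b_1(0,a_2)=a_2U_2-U_1a_2$, observe that the resulting relations identify all monomial classes of the same total degree $n+m$ up to nonzero scalars while preserving parity, so the quotient is spanned by $[1]$ and $[U_1]$, and then use $h_0=k_0=\mathrm{id}$ to see both classes are fixed. Your exponents of $\lambda$ differ from the paper's by a sign (a commutation-convention issue), which is immaterial since only the nonvanishing of the scalars matters, and your explicit check that $[U_1]\mapsto[U_2^{-1}]=c_{-1}=c_1$ is in fact slightly more careful than the paper's.
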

\begin{proof}
We have the map $(1 \otimes b_1)$ in the tensor complex defined below,
\begin{center}
$(1\otimes b_1)(a\otimes I\otimes e_j)=a\otimes_{\mathfrak B_\theta^{alg}}(I\otimes U_j-U_j\otimes I)= U_j^{-1}a-aU_j$.
\end{center}
As before if we use the twisted bicomplex structure of ${}_{i}\mathcal A_\theta^{alg}$ over the algebra $\mathcal A_\theta^{alg}$, the map$(1 \otimes b_1)$ can be simplified as follows,
\begin{center}
$b_1(a_1,0)= a_1U_1- U_2^{-1}a_1\text{ and } b_1(0,a_2)= a_2 U_2-U_1a_2$.
\end{center}
Observe that  $b_1(a_1,a_2)= b_1(a_1,0)+b_1(0,a_2)$. Further we can simplify the calculation by considering only elements of the type $b_1(U_1^nU_2^m,0)$, and similarly for $b_1(0,a_2)$ we can consider the elements of the type $b_1(0,U_1^nU_2^m)$.
We observe that 
\begin{center}
$b_1(U_1^nU_2^m,0)= ({U_1}^n{U_2}^m)U_1-U_2^{-1}({U_1}^n{U_2}^m)={\lambda}^{-n}({\lambda}^{m}U_1U_2-1)U_1^{n}U_2^{m-1}$
\end{center}
and
\begin{center}
$b_1(0,U_1^nU_2^{m-1})= ({U_1}^n{U_2}^{m-1})U_2-U_1({U_1}^n{U_2}^{m-1})= U_1^nU_2^{m}(U_2-\lambda^{-n}U_1).$
\end{center}
From the above relations it is clear that in the quotient space, we have only the coefficients $a_{0,0} \text{ and } a_{1,0}$ remaining. Hence we now have the following result. 
\begin{center}
$H_0(\mathcal A_\theta^{alg},{}_{i}\mathcal A^{alg}_\theta)= {}_{i}\mathcal A^{alg}_\theta/ \langle{\lambda}^{-n}({\lambda}^{m}U_1U_2-1)U_1^{n}U_2^{m-1}, U_1^nU_2^{m}(U_2-\lambda^{-n}U_1) \rangle $
\end{center}
Now to obtain $(H_0(\mathcal A_\theta^{alg}, {}_{i}\mathcal A_\theta^{alg}))^{\mathbb Z_4}$ we consider complex map $h: J_\ast \to C_\ast$, since the map $h_0 : J_0(\mathcal A_\theta^{alg}) \to C_0(\mathcal A_\theta^{alg})$ is the identity map, the $\mathbb Z_4$ action on the bar complex is translated to the Kozul complex with no alteration. Hence we get that
\begin{center}
$HH_0((\mathcal {A}_{\theta, i}^{alg \bullet})^{\mathbb {Z}_4})=  \langle \bar{a_{0,0}}, \bar{a_{1,0}} \rangle$.
\end{center}
\end{proof}

\begin{lemma} $H_1(\mathcal A_\theta^{alg},{}_{i}\mathcal A^{alg}_\theta) \cong 0$.
\end{lemma}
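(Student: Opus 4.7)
The plan is to mirror the induction-on-support strategy already developed for the $\mathbb Z_2$ and $\mathbb Z_3$ twisted coefficients. First I would compute the explicit kernel and image conditions from $(1\otimes b_1)$ and $(1\otimes b_2)$ with the twisted bimodule structure of ${}_i\mathcal A_\theta^{alg}$. Using $U_2U_1=\lambda U_1U_2$ one obtains, at the $(p,q)$-coefficient of Laurent expansions, a single scalar kernel relation among $a^1_{p-1,q}, a^1_{p,q+1}, a^2_{p,q-1}, a^2_{p-1,q}$ (with appropriate powers of $\lambda$), and two scalar image relations expressing $a^1_{p,q}, a^2_{p,q}$ in terms of $a_{p,q-1}, a_{p-1,q}, a_{p,q+1}$. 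These four-term equations are the basic input for the combinatorial argument.

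Second, I would carry over the diagrammatic formalism: lattice sites of type $a^1$ as boxes and type $a^2$ as filled circles, with an edge joining two sites whenever they appear in a common kernel equation. The ``no open edge'' lemma is proved as in the $\mathbb Z_2$ case by evaluating the kernel equation at the three neighbors of a putative isolated vertex and deriving a contradiction. The base step consists in checking by hand that a kernel solution with at most three nonzero sites must vanish, and that the smallest non-trivial diagram (the four-vertex analogue of $A_4$) is in the image of $(1\otimes b_2)$; both are finite case analyses.

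Third, I would execute the induction on $|S_0|$. Given $S_0$ with $x$ nonzero entries, place a bounding square $\beta$ around its support, take $\mu$ to be the bottom entry of the leftmost nonzero column, and argue $\mu$ must be of type $a^2$, say $\mu=a^2_{r,s}$, else the kernel equation at $\mu$ forces $\mu=0$. I would then define a local modification $\wedge$ that sets $a^2_{r,s}$ and a neighboring $a^1$-entry to zero while adjusting two further entries by $\lambda$-rescaled correction terms, mirroring the $\mathbb Z_2/\mathbb Z_3$ definitions but with the rescaling factors dictated by the $\mathbb Z_4$ action $U_1\mapsto U_2^{-1}$, $U_2\mapsto U_1$. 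Verifying $\wedge(S_0)=:S_1$ is still a kernel solution amounts to checking four kernel equations at the sites touching the modification. In parallel I would exhibit a one-point-supported element ${}_{r,s}g \in \mathcal A_\theta^{alg}$ whose $b_2$-image is exactly $S_0-S_1$, so that $b_2(u+{}_{r,s}g)=S_0$ whenever $b_2(u)=S_1$. Termination of $\wedge^N$ is the same shift-the-left-column-right argument: eventually all nonzero entries collapse onto the right edge of $\beta$, where the bottommost entry, again necessarily of type $a^2$, satisfies a kernel equation whose other terms vanish, forcing it to be zero.

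The main obstacle will be the bookkeeping in the inductive step. The $\mathbb Z_4$ generator mixes both generators nontrivially, so the $\lambda$-powers appearing in the twisted differentials and in the resulting four-neighbor kernel equations are distinct from those in the $\mathbb Z_2$ and $\mathbb Z_3$ cases; getting the exact form of $\wedge$ and of ${}_{r,s}g$ so that all four neighboring kernel equations are simultaneously preserved requires a careful recomputation, and this is where the proof has to be written out in full detail rather than invoked by analogy.
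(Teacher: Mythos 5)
Your proposal follows essentially the same strategy as the paper's proof: derive the explicit kernel and image relations for the $i$-twisted differentials, set up the lattice/diagram formalism, rule out solutions with at most three nonzero entries, and then induct on the size of the support by locating the bottom entry $a^2_{r,s}$ of the leftmost nonzero column, applying a local modification $\wedge$ that kills it while preserving the kernel equations, exhibiting a one-point-supported correction ${}_{r,s}g$ with $b_2(u+{}_{r,s}g)=S_0$, and terminating via the rightward-shift argument at the edge of the bounding square. The only remaining work, as you note, is the $\lambda$-power bookkeeping in the four neighboring kernel equations, which is exactly what the paper's case-by-case verification carries out.
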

\begin{proof}
 From the previous calculations we do have an explicit formula for the kernel and the image equations of $H_1(\mathcal A_\theta^{alg}, {}_{i}\mathcal A_\theta^{alg})$.
\begin{center}
$\lambda^{-m}a^1_{n-1,m}-\lambda^{n}a^1_{n,m+1}=a^2_{n-1,m}-a^2_{n,m-1}$ ( Kernel Equation )
\end{center}
\begin{center}
$a^1_{n,m}=a_{n-1,m}-\lambda a_{n,m-1} \text{ and } a^2_{n,m}=\lambda^{-m} a_{n-1,m}-\lambda^{1+n} a_{n,m+1}$ (Image Solution)
\end{center}
As before, \emph{a,b,c, and d} are said to be connected if there exists a kernel equation containing them. So, any given kernel solution will have its elements connected by the kernel diagram. \newline
e.g. equation $\lambda^{-0}a^1_{-1,0}-\lambda^{0}a^1_{0,1}=a^2_{-1,0}-a^2_{0,-1}$ is drawn below: 
\begin{center}
\begin{tikzpicture}[->]
\draw (-1,0)node[above]{$a^1_{-1,0}$} -- (0,1)node[midway,right]{$a^1_{0,1}$}-- (0,-1)node[below]{$a^2_{0,-1}$}--(-1,0)node[below]{$a^2_{-1,0}$};
\node at (0,1) [transition]{};
\node at (-1,0) [transition]{};
\fill (-1,0) circle (2pt);
\fill (0,-1) circle (2pt);
\end{tikzpicture}
\end{center}

Now, the proof will proceed with induction over the \emph{number} of non-zero elements in a given kernel solution.\par
After going through the process as in the previous cases, one figures out that there is no kernel solution with the number of  non-zero entries less than or equal to 3. Assume that all kernel solutions having number of  non-zero elements less than or equal to $(x-1)$ come from image. Then consider a kernel solution $S_0$ with $x$ non-zero elements in it. Since this solution is finitely supported over the lattice plane, there exists a closed square region $\beta$  over which $S_0$ is supported.
We shall now construct a new solution $S_1$ from $S_0$, with the number of  non-zero elements in $S_1$ at most equal to $x$.

\par
Inside $\beta$, consider the left most column at least one point of which is non-zero. Choose the bottom point $\mu$ of this column. It is clear that $\mu = a^2_{r,s}$ for some $(r,s) \in \mathbb Z^2$. As if it were $a^1_{r,s}$ then the following kernel solution,
\begin{center}
$\lambda^{1-s}a^1_{r-1,s-1}-\lambda^{r}a^1_{r,s}=a^2_{r-1,s-1}-a^2_{r,s-2}$
\end{center}
As, in it all but $a^1_{r,s}$ are zero. This is a contradiction. \par

Now a new solution $S_1$ shall be constructed from $S_0$, with the number of elements at most equal to $x$. To do so,  consider the following map $\wedge$
\begin{center}
$\wedge: \mathbb Z^2 \to \mathbb Z^2 , \text{ such that }$
\end{center}
$(a^1_{r,s+2})^\wedge=(a^2_{r,s})^\wedge=0$. \par
And let $(a^1_{r+1,s+1})^\wedge=a^1_{r+1,s+1}+\lambda^{-1-r}a^2_{r,s} ; (a^2_{r+1,s+1})^\wedge=a^2_{r+1,s+1}+\lambda^{-2-s}a^2_{r,s}$ \newline and  $(a^j_{p,q})^\wedge=a^j_{p,q}$ for other lattice points.

\begin{lemma}Let $S_1:=\wedge(S_0)$. Then $S_1$ is a kernel solution such that $|S_1| \leq |S_0|$. If for some $u \in  {}_{\omega}\mathcal A_\theta^{alg}$, $b_2(u)=S_1$ then $b_2(u')=S_0$, where $u' = u+ {}_{r,s}g$,
\begin{center}
${}_{r,s}g_{p,q}=\begin{cases}
{\lambda}^{-1}a^1_{r,s+2} &\text{ if } (p,q)=(r,s+1)\\
0 & else. \end{cases}$
\end{center}
\end{lemma}
\begin{proof}
Second part is clear that the cardinality of non-zero entries is reduced by 2 when it is asked that $a'^1_{r,s+2}=a'^2_{r,s}=0$ but then the cardinality will then increase by two in case both $a^1_{r+1,s} \text{ and } a^2_{r+1,s}$ are zero in $S_0$. \newline
To check that $S_1$ is a solution, it is enough to check that the kernel equations containing any of these four altered elements hold in $S_1$.
That is,
\begin{center}
$\lambda^{-m}a^1_{n-1,m}-\lambda^{n}a^1_{n,m+1}=a^2_{n-1,m}-a^2_{n,m-1}$ holds
\end{center}
for $(n,m)=(r,s+1), (r+1,s+2), (r+1,s) \text{ and } (r+2,s+1)$.\newline
\underline{Case 1: $(n,m)=(r,s+1)$} \newline
$\lambda^{-1-s}(a^1_{r-1,s+1})^\wedge-\lambda^{r}(a^1_{r,s+2})^\wedge=(a^2_{r-1,s+1})^\wedge-(a^2_{r,s})^\wedge$ \newline
 $\implies \lambda^{-1-s}a^1_{r-1,s+1}-0=a^2_{r-1,s+1}-0.$\newline
This holds as : $a^1_{r-1,s+1}=a^2_{r-1,s+1}=0$.\newline
\underline{Case 2: $(n,m)=(r+1,s+2)$} \newline
$\lambda^{-2-s}(a^1_{r,s+2})^\wedge-\lambda^{r+1}(a^1_{r+1,s+3})^\wedge=(a^2_{r,s+2})^\wedge-(a^2_{r+1,s+1})^\wedge$ \newline
$ \implies 0-\lambda^{r+1}a^1_{r+1,s+3}=a^2_{r,s+2}-a^2_{r+1,s+1}-\lambda^{-2-s} a^1_{r,s+2}$\newline
$\implies \lambda^{-2-s}a^1_{r,s+2}-\lambda^{r+1}a^1_{r+1,s+3}=a^2_{r,s+2}-a^2_{r+1,s+1}$.\newline
This holds as $S_0$ was a kernel solution.\newline
\underline{Case 3: $(n,m)=(r+1,s)$} \newline
$\lambda^{-s}(a^1_{r,s})^\wedge-\lambda^{r+1}(a^1_{r+1,s+1})^\wedge=(a^2_{r,s})^\wedge-(a^2_{r+1,s-1})^\wedge$ \newline
$ \implies\lambda^{-s}a^1_{r,s}-\lambda^{r+1}(a^1_{r+1,s+1}+\lambda^{-1-r}a^2_{r,s})=0-a^2_{r+1,s-1}$ \newline
$\implies \lambda^{-s}a^1_{r,s}-\lambda^{r+1}a^1_{r+1,s+1}=a^2_{r,s}-a^2_{r+1,s-1}.$\newline
This holds as $S_0$ was a kernel solution.\newline
\underline{Case 4: $(n,m)=(r+2,s+1)$} \newline
$\lambda^{-1-s}(a^1_{r+1,s+1})^\wedge-\lambda^{r+2}(a^1_{r+2,s+2})^\wedge=(a^2_{r+1,s+1})^\wedge-(a^2_{r+2,s})^\wedge$ \newline
$\lambda^{-1-s}(a^1_{r+1,s+1}+\lambda^{-1-r}a^2_{r,s})-\lambda^{r+2}a^1_{r+2,s+2}=a^2_{r+1,s+1}+\lambda^{-2-s} a^1_{r,s+2}-a^2_{r+2,s}$\newline
$\lambda^{-1-s}a^1_{r+1,s+1}+\lambda^{-2-s-r}a^2_{r,s}-\lambda^{r+2}a^1_{r+2,s+2}=a^2_{r+1,s+1}+\lambda^{-2-s}a^1_{r,s+2}-a^2_{r+2,s}$\newline
Using $a^1_{r,s+2} = \lambda^{-r} a^2_{r,s}$, we can reduce the above equation to the following equality.
\begin{center}
$\lambda^{-1-s}a^1_{r+1,s+1}-\lambda^{r+2}a^1_{r+2,s+2}=a^2_{r+1,s+1}-a^2_{r+2,s}$
\end{center}
This holds as $S_0$ was a kernel solution.
\newline
It is easy to see that like kernel diagram we can construct image diagrams. These diagrams are obtained by looking at how $b_2$-image of a non-zero lattice point look like as an element of $\mathcal A_\theta^{alg} \oplus \mathcal A_\theta^{alg}$.
\begin{center}
\begin{tikzpicture}[->]
\draw (1,0)node[right,above,above]{$a^1_{1,0}$} -- (0,1)node[above]{$a^1_{0,1}$}-- (0,-1)node[below]{$a^2_{0,-1}$}--(1,0)node[below]{$a^2_{1,0}$};
\node at (0,1) [transition]{};
\node at (1,0) [transition]{};
\fill (1,0) circle (2pt);
\fill (0,-1) circle (2pt);
\end{tikzpicture}
\end{center}
So, it is clear from the diagram as well as from the image solution equations an image element induces kernel solution elements to its right and up for $a^1$, or right and below for $a^2$ elements. Hence the plausible image elements for the elements of $S_1$ other than the four altered ones are not altered and hence $(a_1,a_2)$ at these lattice point come from image elements by induction.

So, it is clear from the diagram as well as from the explicit equations that we have for the map $b_2$, an image element $a_{0,0}$ induces kernel solution elements to its right-below and up for $a^1$, or right and below for $a^2$ elements. Hence if we verify that
\begin{center}
${b_2(u')}_{p,q} = (a^1_{p,q},a^2_{p,q}) \text{ for } (p,q) = (r,s+2), (r,s), (r+1,s+1)$
\end{center}
then we have proved that $b_2(u') = S_0$. \par
Observe that \newline
$(b_2(u'))_{r,s+2}=(b_2(u+{}_{rs}g))_{r,s+2}=(b_2(u))_{r,s+2} + (b_2({}_{rs}g))_{r,s+2} = ((a^1_{r,s+2})^\wedge,(a^2_{r,s+2})^\wedge)+(a^1_{r,s+2},0) = (a^1_{r,s+2}, a^2_{r,s+2})$ ,\newline
$(b_2(u'))_{r,s}=(b_2(u+{}_{rs}g))_{r,s}=(b_2(u))_{r,s} + (b_2({}_{rs}g))_{r,s} = ((a^1_{r,s})^\wedge,(a^2_{r,s})^\wedge)+(0,\lambda^r a^1_{r,s+2}) = (a^1_{r,s}, a^2_{r,s})$, and \newline
$(b_2(u'))_{r+1,s+1}=(b_2(u+{}_{rs}g))_{r+1,s+1}=(b_2(u))_{r+1,s+1} + (b_2({}_{rs}g))_{r+1,s+1} \newline= ((a^1_{r+1,s+1})^\wedge,(a^2_{r+1,s+1})^\wedge)+(-\lambda^{-1-r}a^2_{r,s},-\lambda^{-s-2}a^1_{r,s}) = (a^1_{r+1,s+1}, a^2_{r+1,s+1}).$ \newline
\end{proof}
\begin{lemma}
 $\wedge^N(S_0) = 0$ for any solution $S_0$ for a sufficiently large number $N$.
\end{lemma}
\begin{proof}
The lemma states that $|S_1| \leq |S_0|$ is strict after sufficiently many iterations. Notice that the above process reduces the number of non-zero entries in the left most column by 2. And when this process is iterated this will lead to shifting of the left most non-zero column rightwards. Pictorially if it is assumed that at no stage the iteration reduces the number of non-zero elements, then these many elements are compressed by the shifting of the left most non-zero column towards right by this process as the right edge of $\beta$ is not changed by this process unless all the non-zero elements are collected on this right edge of $\beta$. \par
Hence, all the non-zero elements of $\wedge^d(S_0)$ will at some stage be bounded to the right edge of $\beta$. But, then, consider the lowest element of $\wedge^d(S_0)$ on this edge. It has to be only $a^2_{w,t}$ for some $(w,t)$. Consider the following kernel equation,
\begin{center}
$\lambda^{-t}a^1_{w,t}-\lambda^{w+1}a^1_{w+1,t+1}=a^2_{w,t}-a^2_{w+1,t-1}$ .
\end{center}
The above equation has all but $a^2_{w,t}$ as zero, which is a contradiction.
\end{proof}
Hence, with the above lemmas we conclude that 
\begin{center}
$HH_1(\mathcal A_{\theta, i}^{alg})=0$.
\end{center}
\par Above computations were for $g=i \in \mathbb Z_4$. Action of $-i \in \mathbb Z_4$ differs from that of $i$ by swapping $U_1 \text{ and } U_2$. That is action of $i$ on torus generated by $U_2 U_1=\lambda U_1 U_2$ is same as $-i$ acts on torus generated by $U_1U_2=\lambda U_2U_1$. Now, since, the results are independent of $\lambda$, so considering $\lambda^{-1}$ would give the same result. Hence we have the following result,
\begin{center}
 $HH_0(\mathcal {A}_{\theta, \pm i}^{alg \bullet})^{\mathbb Z_4} \cong \mathbb C^2,\text{ } HH_1(\mathcal {A}_{\theta, \pm i}^{alg \bullet}) = 0, \text{ and }HH_2(\mathcal {A}_{\theta, \pm i}^{alg \bullet}) = 0$.
\end{center}
\end{proof}

\begin{thm} The Hochschild homology groups for $\mathcal A_\theta^{alg} \rtimes \mathbb Z_4$ are as follows
\begin{center}
$HH_0(\mathcal A_\theta^{alg} \rtimes \mathbb Z_4) \cong \mathbb C^{8}; \text{ }
HH_1(\mathcal A_\theta^{alg} \rtimes \mathbb Z_4) \cong 0; \text{ }
HH_2(\mathcal A_\theta^{alg} \rtimes \mathbb Z_4) \cong \mathbb C$. 
\end{center}
\end{thm}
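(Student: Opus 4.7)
The plan is to follow the template of Theorems 7.9 and 8.8: invoke the spectral decomposition of Theorem 4.1 to write
$$HH_\bullet(\mathcal A_\theta^{alg} \rtimes \mathbb Z_4) \;\cong\; \bigoplus_{t \in \mathbb Z_4} HH_\bullet\!\bigl((\mathcal A_{\theta, t}^{alg \bullet})^{\mathbb Z_4}\bigr),$$
with $\mathbb Z_4 = \{1, i, -1, -i\}$, and then identify each of the four summands. Three of them can be read off from earlier sections: for $t = 1$ the computations of Section 6 give a contribution of $(\mathbb C, 0, \mathbb C)$ in degrees $(0,1,2)$, and for each of $t = i$ and $t = -i$ Theorem 9.1 gives a contribution of $(\mathbb C^2, 0, 0)$.

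All of the new work lies in the $t = -1$ summand. Observe that $-1 = i^2 \in \mathbb Z_4$ acts on the generators by $U_j \mapsto U_j^{-1}$, which is exactly the action of the $\mathbb Z_2$ generator studied in Section 7. The underlying twisted Hochschild groups $H_\bullet(\mathcal A_\theta^{alg}, {}_{-1}\mathcal A_\theta^{alg})$ are therefore already computed in Lemmas 7.3--7.5: one has $H_2 = H_1 = 0$ and $H_0 \cong \mathbb C^4$, spanned by $\{\bar{a_{0,0}}, \bar{a_{1,0}}, \bar{a_{0,1}}, \bar{a_{1,1}}\}$. The only novelty is that we must now take invariants under the full group $\mathbb Z_4$ rather than its subgroup $\langle -1 \rangle$. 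Since each of the four classes above is already $\mathbb Z_2$-invariant, this reduces to computing the induced action of the coset representative $i$.

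Using $i \cdot U_1 = U_2^{-1}$ and $i \cdot U_2 = U_1$, together with the reduction rules
$$U_1^k U_2^m \;\equiv\; \lambda^m U_1^{k+2} U_2^m, \qquad U_1^n U_2^p \;\equiv\; \lambda^n U_1^n U_2^{p+2}$$
extracted from $Image(1 \otimes b_1)$ in Section 7, I expect the calculation to show that $i$ fixes $\bar{a_{0,0}}$ and $\bar{a_{1,1}}$ while swapping $\bar{a_{1,0}}$ with $\bar{a_{0,1}}$. The $\mathbb Z_4$-invariant subspace of $H_0$ will then have basis $\{\bar{a_{0,0}}, \bar{a_{1,0}} + \bar{a_{0,1}}, \bar{a_{1,1}}\}$, contributing $\mathbb C^3$ to $HH_0$. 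Summing all four contributions gives $HH_0 \cong \mathbb C^{1+2+3+2} = \mathbb C^8$, $HH_1 = 0$, and $HH_2 \cong \mathbb C$, which is the desired conclusion.

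The main delicate point will be verifying the action on $\bar{a_{1,1}}$: commuting $U_2^{-1}$ past $U_1$ yields a factor $\lambda^{-1}$, which must cancel precisely against the $\lambda$ appearing when one shifts $U_2^{-1}$ back to $U_2$ via the second reduction rule. This cancellation, which hinges on $\lambda$ having infinite multiplicative order (ensured by $\theta \notin \mathbb Q$), is essentially what makes $\bar{a_{1,1}}$ survive under the $\mathbb Z_4$-action; if $\lambda$ were a root of unity the count could collapse further. Once the four invariance computations are checked, the theorem follows by adding dimensions.
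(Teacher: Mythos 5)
Your proposal is correct and takes essentially the same route as the paper: the same spectral decomposition over $t\in\{1,i,-1,-i\}$, the same contributions $(\mathbb C,0,\mathbb C)$, $(\mathbb C^2,0,0)$, $(\mathbb C^2,0,0)$ from the $t=1$ and $t=\pm i$ summands, and the same key computation of the induced $i$-action on the four classes $\bar{a_{0,0}},\bar{a_{1,0}},\bar{a_{0,1}},\bar{a_{1,1}}$ spanning $H_0(\mathcal A_\theta^{alg},{}_{-1}\mathcal A_\theta^{alg})$, yielding the $3$-dimensional invariant subspace cut out by $b=c$. The cancellation you flag for $\bar{a_{1,1}}$ (namely $U_1U_2\mapsto U_2^{-1}U_1\sim U_1U_2$) is exactly the step the paper records.
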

\begin{proof} 
We know that $HH_0(\mathcal {A}_{\theta, \pm i}^{alg}) \cong \mathbb C^2$, following the process of checking the invariance of a cycle, we obtain that $HH_0(\mathcal {A}_{\theta, \pm i}^{alg})^{\mathbb Z_4} \cong \mathbb C^2$. Similarly we compute that $HH_0(\mathcal {A}_{\theta, -1}^{alg})^{\mathbb Z_4} \cong \mathbb C^3$. To see this we consider the action of $i$ on the elements of $HH_0(\mathcal {A}_{\theta, -1}^{alg})$, we observe that under this action
\begin{enumerate}
\item [$\bullet$] $1 \mapsto 1$, 

\item [$\bullet$] $U_1 \mapsto U_2^{-1}\sim U_2$,

\item [$\bullet$] $U_2 \mapsto U_1$,

\item [$\bullet$] $U_1U_2 \mapsto U_2^{-1}U_1 \sim U_1U_2.$.
\end{enumerate}
Hence we see that 
\begin{center}
$\varphi =a1+bU_1+cU_2+dU_1U_2 \mapsto a1+bU_2+cU_1+dU_1U_2$.
\end{center} 
The above element is invariant iff $b=c$ hence we have a $3$ dimensional invariant sub-space of $HH_0(\mathcal {A}_{\theta, -1}^{alg})$. Hence, we obtain that 
\begin{center}
$HH_0(\mathcal A_\theta^{alg} \rtimes \mathbb Z_4) \cong \mathbb C^{8}$. 
\end{center}
Since we have $HH_1(\mathcal A_\theta^{alg}, {}_{\pm i}\mathcal A_\theta^{alg})=HH_1(\mathcal A_\theta^{alg}, {}_{-1}\mathcal A_\theta^{alg}) =HH_1(\mathcal A_\theta^{alg}, \mathcal A_\theta^{alg})^{\mathbb Z_4}=0$, we conclude that 
\begin{center}
$HH_1(\mathcal A_\theta^{alg} \rtimes \mathbb Z_4) \cong 0$.
\end{center}
As, $HH_2(\mathcal {A}_{\theta, \pm i}^{alg}) = HH_2(\mathcal {A}_{\theta, -1}^{alg}) = 0$, while $HH_2(\mathcal {A}_{\theta}^{alg})^{\mathbb Z_4} \cong \mathbb C$. We have 
\begin{center}
$HH_2(\mathcal A_\theta^{alg} \rtimes \mathbb Z_4) \cong \mathbb C$.
\end{center}
\end{proof}

\subsection{Cyclic homology of  $\mathcal A_\theta^{alg} \rtimes \mathbb Z_4$}.\newline
\begin{thm}
$HC_{even}(\mathcal A_{\theta, \pm i}^{alg})^{\mathbb Z_4} \cong \mathbb C^2$, while 
$HC_{odd}(\mathcal A_{\theta, \pm i}^{alg})^{\mathbb Z_4} = 0.$
\end{thm}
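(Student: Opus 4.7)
The plan is to mimic the argument already used for the $\mathbb Z_2$ and $\mathbb Z_3$ cases: apply Connes' SBI long exact sequence to the twisted complex and feed in the Hochschild computations just completed. Since the $\mathbb Z_4$ action on $\mathcal A^{alg}_{\theta,\pm i}$ commutes with the twisted differential ${}_{\pm i}b$, taking $\mathbb Z_4$-invariants preserves the exactness, producing
\begin{center}
$\cdots \xrightarrow{B} (HH_n(\mathcal A^{alg}_{\theta,\pm i}))^{\mathbb Z_4} \xrightarrow{I} (HC_n(\mathcal A^{alg}_{\theta,\pm i}))^{\mathbb Z_4} \xrightarrow{S} (HC_{n-2}(\mathcal A^{alg}_{\theta, \pm i}))^{\mathbb Z_4} \xrightarrow{B} (HH_{n-1}(\mathcal A^{alg}_{\theta,\pm i}))^{\mathbb Z_4} \xrightarrow{I} \cdots$
\end{center}

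First I would invoke Theorem 7.1, which tells us that $(HH_k(\mathcal A^{alg}_{\theta, \pm i}))^{\mathbb Z_4}=0$ for every $k \geq 1$, while $(HH_0(\mathcal A^{alg}_{\theta, \pm i}))^{\mathbb Z_4} \cong \mathbb C^2$. Substituting these vanishings into the sequence for $n \geq 2$ collapses the $B$ and $I$ arrows on either side of $(HC_n)^{\mathbb Z_4}$ to zero, whence the periodicity map $S$ becomes an isomorphism
\begin{center}
$(HC_n(\mathcal A^{alg}_{\theta,\pm i}))^{\mathbb Z_4} \cong (HC_{n-2}(\mathcal A^{alg}_{\theta,\pm i}))^{\mathbb Z_4}$ for all $n \geq 2$.
\end{center}

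Second I would pin down the base cases. For $n=0$ we have the standard identification $HC_0 = HH_0$, so $(HC_0(\mathcal A^{alg}_{\theta, \pm i}))^{\mathbb Z_4} \cong \mathbb C^2$, which by the isomorphism above propagates to give $(HC_{2k}(\mathcal A^{alg}_{\theta, \pm i}))^{\mathbb Z_4} \cong \mathbb C^2$ for every $k \geq 0$, establishing the even part. For the odd part, the segment $(HH_1)^{\mathbb Z_4} \to (HC_1)^{\mathbb Z_4} \to (HC_{-1})^{\mathbb Z_4}=0$ together with $(HH_1)^{\mathbb Z_4}=0$ forces $(HC_1(\mathcal A^{alg}_{\theta,\pm i}))^{\mathbb Z_4}=0$, and then the $S$-periodicity isomorphism propagates the vanishing to all odd degrees.

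I do not anticipate any real obstacle beyond bookkeeping: every ingredient (the SBI sequence, the vanishing of all positive-degree $\mathbb Z_4$-invariant Hochschild groups, and the compatibility of $\mathbb Z_4$-invariants with the cyclic differentials) has already been established, so the argument is purely formal. The only mild subtlety worth stating explicitly is that commutation of the $\mathbb Z_4$ action with $B$, $I$, $S$ is needed to take invariants termwise in the long exact sequence, and this was justified in Lemma 7.2 where the $\mathbb Z_4$ action was shown to commute with ${}_{\pm i}b$.
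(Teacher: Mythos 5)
Your proposal is correct and follows essentially the same route as the paper: the $\mathbb Z_4$-invariant SBI sequence, the vanishing of $(HH_k(\mathcal A^{alg}_{\theta,\pm i}))^{\mathbb Z_4}$ for $k\geq 1$ forcing $S$ to be an isomorphism, and the base cases $(HC_0)^{\mathbb Z_4}=(HH_0)^{\mathbb Z_4}\cong\mathbb C^2$ and $(HC_1)^{\mathbb Z_4}=0$. The only differences are cosmetic (your explicit $n\geq 2$ periodicity statement and the segment ending in $HC_{-1}=0$, where the paper argues degree by degree).
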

\begin{proof}
We apply the $S,B,I$ long exact sequence relating the Hochschild and cyclic homology of an algebra $A$.
\begin{center}
$... \xrightarrow{B} HH_n(A) \xrightarrow{I} HC_n(A) \xrightarrow{S} HC_{n-2}(A) \xrightarrow{B} HH_{n-1}(A) \xrightarrow{I}...$.\\
\end{center}
Since the $\mathbb Z_4$ action on $\mathcal A_{\theta, {\pm i}}^{alg}$ commutes with the map ${}_{{\pm i}}b$ , we obtain the following exact sequence
\begin{center}
$... \xrightarrow{B} (HH_n(\mathcal A^{alg}_{\theta,{\pm i}}))^{\mathbb Z_4} \xrightarrow{I} (HC_n(\mathcal A^{alg}_{\theta,{\pm i}}))^{\mathbb Z_4} \xrightarrow{S} (HC_{n-2}(\mathcal A^{alg}_{\theta, {\pm i}}))^{\mathbb Z_4} \xrightarrow{B} (HH_{n-1}(\mathcal A^{alg}_{\theta,{\pm i}}))^{\mathbb Z_4} \xrightarrow{I}...$.
\end{center}
\par
We know that $(HH_2(\mathcal A^{alg}_{\theta,{\pm i}}))^{\mathbb Z_4}=(HH_1(\mathcal A^{alg}_{\theta,{\pm i}}))^{\mathbb Z_4}=0$. Hence we obtain that
\begin{center}
$(HC_2(\mathcal A^{alg}_{\theta,{\pm i}}))^{\mathbb Z_4} \cong (HC_0(\mathcal A^{alg}_{\theta, {\pm i}}))^{\mathbb Z_4}$.
\end{center}
But, a preliminary result shows that $HH_0(\mathcal A_{\theta,{\pm i}}^{alg})=HC_0(\mathcal A_{\theta,{\pm i}}^{alg})$. Hence, we obtain that
\begin{center}
 $(HC_2(\mathcal A^{alg}_{\theta,{\pm i}}))^{\mathbb Z_4}=(HC_0(\mathcal A^{alg}_{\theta,{\pm i}}))^{\mathbb Z_4} \cong \mathbb C^2$.
\end{center}
Also, since $(HH_2(\mathcal A^{alg}_{\theta,\omega^{\pm 1}}))^{\mathbb Z_4}=(HH_3(\mathcal A^{alg}_{\theta,\pm i}))^{\mathbb Z_4}=0$, we obtain that
\begin{center}
$(HC_3(\mathcal A^{alg}_{\theta,\pm i}))^{\mathbb Z_4}\cong (HC_1(\mathcal A^{alg}_{\theta,\pm i}))^{\mathbb Z_4}$.
\end{center}
 Since $HH_1(\mathcal A_{\theta,\pm i}^{alg})=0$, we have $(HC_1(\mathcal A^{alg}_{\theta,\pm i}))^{\mathbb Z_4}=0$. Hence, we have 
\begin{center}
$HC_{odd}(\mathcal A_{\theta, \pm i}^{alg})^{\mathbb Z_4} =0$.
\end{center}
\end{proof}

\subsection{Periodic cyclic homology}.\newline
\begin{thm}
$HC_{even}(\mathcal A_\theta^{alg} \rtimes \mathbb Z_4) \cong \mathbb C^{9} \text{ while } HC_{odd}(\mathcal A_\theta^{alg} \rtimes \mathbb Z_4) = 0$.
\end{thm}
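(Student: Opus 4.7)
The plan is to mirror the two-step strategy already used for $\mathbb Z_2$ and $\mathbb Z_3$: first decompose the periodic cyclic homology of the crossed product as a direct sum over group elements via the paracyclic decomposition of Theorem~4.1, then use the SBI long exact sequence (in its $\mathbb Z_4$-invariant form) to pass from the previously computed Hochschild data to the periodic cyclic data at each group element separately.

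First I would record the decomposition
\begin{center}
$HC_{\bullet}(\mathcal A_\theta^{alg} \rtimes \mathbb Z_4) \cong \bigoplus_{g \in \mathbb Z_4} HC_{\bullet}(\mathcal A^{alg}_{\theta,g})^{\mathbb Z_4}$,
\end{center}
and treat the four summands $g=1,\; g=-1,\; g=i,\; g=-i$ in turn. For $g = \pm i$ the statement $HC_{even}^{\mathbb Z_4} \cong \mathbb C^2$ and $HC_{odd}^{\mathbb Z_4}=0$ is exactly Theorem~9.7, so nothing more is needed there. For $g = -1$ I would invoke the $\mathbb Z_4$-equivariant SBI sequence
\begin{center}
$\cdots \xrightarrow{B} (HH_n(\mathcal A^{alg}_{\theta,-1}))^{\mathbb Z_4} \xrightarrow{I} (HC_n(\mathcal A^{alg}_{\theta,-1}))^{\mathbb Z_4} \xrightarrow{S} (HC_{n-2}(\mathcal A^{alg}_{\theta,-1}))^{\mathbb Z_4} \xrightarrow{B} \cdots$.
\end{center}
The Hochschild inputs here are $HH_1(\mathcal A^{alg}_{\theta,-1}) = HH_2(\mathcal A^{alg}_{\theta,-1}) = 0$ and $HH_0(\mathcal A^{alg}_{\theta,-1})^{\mathbb Z_4} \cong \mathbb C^3$ (the dimension 3 is extracted inside the proof of Theorem 9.6 where the induced $i$-action on $\{1,U_1,U_2,U_1U_2\}$ gives a 3-dimensional fixed subspace). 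Iterating $S$ and using $HH_0 = HC_0$ collapses the SBI sequence to $(HC_{even})^{\mathbb Z_4} \cong \mathbb C^3$ and $(HC_{odd})^{\mathbb Z_4} = 0$.

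For $g = 1$ I would run the same SBI argument using the classical Hochschild homology of the noncommutative torus restricted to $\mathbb Z_4$-invariants, which by Section~6 gives $HH_0(\mathcal A_\theta^{alg})^{\mathbb Z_4} \cong \mathbb C$, $HH_1(\mathcal A_\theta^{alg})^{\mathbb Z_4}=0$, $HH_2(\mathcal A_\theta^{alg})^{\mathbb Z_4} \cong \mathbb C$. The vanishing of the odd Hochschild invariants makes every connecting map $B$ trivial, so the SBI sequence gives $HC_{even}(\mathcal A_\theta^{alg})^{\mathbb Z_4} \cong \mathbb C^2$ (one from $HH_0$ and one from $HH_2$, both stabilizing under $S$) and $HC_{odd}(\mathcal A_\theta^{alg})^{\mathbb Z_4}=0$.

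Adding the four contributions gives $2+3+2+2 = 9$ in the even degree and $0+0+0+0 = 0$ in the odd degree, which is the claimed result. The only slightly delicate point, and what I expect to be the main obstacle, is verifying that the $\mathbb Z_4$-action on the four Hochschild generators of $HH_0(\mathcal A^{alg}_{\theta,-1})$ that were identified in Lemma~7.4 really has a three-dimensional fixed subspace; this is done by pushing the cycles through $h_0 = \mathrm{id}$, applying the generator of $\mathbb Z_4$, and solving the resulting linear relation, exactly as in the computation at the start of the proof of Theorem~9.6.
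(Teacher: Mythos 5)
Your proposal is correct and follows essentially the same route as the paper: the paracyclic decomposition into the four summands $g=1,-1,\pm i$, the $\mathbb Z_4$-equivariant SBI sequence applied at each summand using the already-computed Hochschild groups (including the $3$-dimensional invariant subspace of $HC_{even}(\mathcal A^{alg}_{\theta,-1})$ extracted from the $i$-action on $\{1,U_1,U_2,U_1U_2\}$), and the tally $2+3+2+2=9$ in even degree with vanishing odd part.
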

\begin{proof}
We have.
\begin{center}
$... \xrightarrow{B} (HH_2(\mathcal A^{alg}_{\theta}))^{\mathbb Z_4} \xrightarrow{I} (HC_2(\mathcal A^{alg}_{\theta}))^{\mathbb Z_4} \xrightarrow{S} (HC_{0}(\mathcal A^{alg}_{\theta}))^{\mathbb Z_4} \xrightarrow{B} (HH_{1}(\mathcal A^{alg}_{\theta,}))^{\mathbb Z_4} \xrightarrow{I}...$
\end{center}
Since $HH_2(\mathcal A_\theta^{alg})^{\mathbb Z_4} \cong HC_0(\mathcal A_\theta^{alg})^{\mathbb Z_4} \cong \mathbb C$, we have $(HC_2(\mathcal A_\theta^{alg}))^{\mathbb Z_4} \cong \mathbb C^2$. Also we notice here that the $\mathbb Z_4$ invariant sub-space of $HC_{even}(\mathcal A_{\theta, -1}^{alg})$ is $3$ dimensional. Hence we conclude that 
\begin{center}
$HC_{even}(\mathcal A_\theta^{alg} \rtimes \mathbb Z_4) \cong \mathbb C^{9}$.
\end{center} \par
As for the odd cyclic homology, we have $(HC_3(\mathcal A^{alg}_{\theta,\pm i}))^{\mathbb Z_4}\cong (HC_1(\mathcal A^{alg}_{\theta,\pm i}))^{\mathbb Z_4}=0$, and we also have $HC_3(\mathcal A_\theta^{alg})^{\mathbb Z_4} = HC_1(\mathcal A_\theta^{alg})^{\mathbb Z_4}=HH_1(\mathcal A_\theta^{alg})^{\mathbb Z_4} = 0$. Combining these two results, we obtain that
\begin{center}
$HC_{odd}(\mathcal A_\theta^{alg} \rtimes \mathbb Z_4) = 0$.
\end{center}
\par So we have computed the Hochschild and cyclic homology of the $\mathbb Z_4$ orbifold.
\end{proof}


\section{$\mathbb Z_6$ action on $\mathcal A_\theta^{alg}$}
The group $\mathbb Z_6$ is embedded in $SL(2,\mathbb Z)$ through its generator $g= \left[
 \begin{array}{cc}
   0 & -1 \\
   1& 1
 \end{array} \right]\in SL(2,\mathbb Z)$. The generator acts of $\mathcal A_\theta^{alg}$ in the following way
\begin{center}
$U_1 \mapsto U_2, U_2 \mapsto \displaystyle \frac{U_1^{-1} U_2}{\sqrt \lambda}$.
\end{center}
\subsection{Hochschild homology}.\newline
We will use $-\omega$ to stand for the generator of $\mathbb Z_6$.
\begin{thm}
$HH_0((\mathcal {A}_{\theta, {-\omega}}^{alg \bullet})^{\mathbb {Z}_6}) \cong \mathbb C$, while $HH_k((\mathcal {A}_{\theta, {-\omega}}^{alg \bullet})^{\mathbb {Z}_6})$  are trivial groups for $k \geq 1$.
\end{thm}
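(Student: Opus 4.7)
The plan is to follow verbatim the three-step reduction used for the $\mathbb{Z}_3$ action in Section 7 and the $\mathbb{Z}_4$ action in Section 9. First, I would state and invoke the $\mathbb{Z}_6$ analogue of Lemmas 6.2, 7.2 and 9.2: the twisted differential ${}_{-\omega}b$ on $J^{\mathbb{Z}_6}_{\ast,-\omega}$ absorbs the cyclic permutation term into a twisted bimodule structure on ${}_{-\omega}\mathcal{A}_\theta^{alg}$, so that
\begin{center}
$H_\bullet(J^{\mathbb{Z}_6}_{\ast,-\omega},{}_{-\omega}b)=(H_\bullet(\mathcal{A}_\theta^{alg},{}_{-\omega}\mathcal{A}_\theta^{alg}))^{\mathbb{Z}_6}.$
\end{center}
The right-hand side is then computed using Connes' Koszul resolution from Lemma 4.1 tensored with ${}_{-\omega}\mathcal{A}_\theta^{alg}$. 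Writing $1\otimes b_1$ and $1\otimes b_2$ with the twist coming from $U_1\mapsto U_2$, $U_2\mapsto U_1^{-1}U_2/\sqrt{\lambda}$ gives explicit recurrences on the Fourier coefficients of elements of $\mathcal{A}_\theta^{alg}$.

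For $H_2(\mathcal{A}_\theta^{alg},{}_{-\omega}\mathcal{A}_\theta^{alg})$ I would identify the kernel condition on $(1\otimes b_2)(a\otimes 1)$ as a pair of equations of the form $a_{n,m}=\mu\cdot a_{n',m'}$ with $|\mu|=1$ linking lattice points on an infinite orbit of the twisted shift. As in Lemmas 6.4, 7.4 and 9.4, any nonzero solution would then have constant modulus on an infinite sublattice, which is incompatible with finite support, so $H_2=0$. For $H_0$ I would compute the images $b_1(U_1^nU_2^m,0)$ and $b_1(0,U_1^nU_2^m)$ under the twisted action, derive the relations they impose on $\bar{a}_{n,m}\in {}_{-\omega}\mathcal{A}_\theta^{alg}/\mathrm{im}(1\otimes b_1)$, and expect them to collapse every coefficient to a single generator $\bar a_{0,0}$; since $h_0=k_0=\mathrm{id}$ the $\mathbb{Z}_6$ action descends unchanged, so $HH_0\cong\mathbb{C}$ follows automatically. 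This is consistent with the total $HH_0(\mathcal{A}_\theta^{alg}\rtimes\mathbb{Z}_6)\cong\mathbb{C}^9$ predicted in Theorem~1.1.

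The main obstacle is vanishing of $H_1$. Here I would run the diagrammatic induction of Sections 7 and 9. Concretely: (i) spell out the kernel relation coming from $1\otimes b_1$ and the image relation coming from $1\otimes b_2$ in the twisted complex, (ii) draw the associated kernel diagrams on $\mathbb{Z}^2$ with boxes for $a^1$ and filled dots for $a^2$, (iii) rule out kernel solutions with at most three nonzero entries by a short case analysis, and (iv) define a local reduction map $\wedge$ that, given a solution $S_0$, deletes the bottom-most entry $a^2_{r,s}$ of the left-most nonzero column while adjusting at most two neighbouring entries. The content of the argument is then the analogue of Lemmas 7.6 and 9.6: verify the four adjacent kernel equations hold for $S_1:=\wedge(S_0)$, exhibit an explicit $u'=u+{}_{r,s}g$ correcting any $b_2$-preimage of $S_1$ to a preimage of $S_0$, and prove termination $\wedge^N(S_0)=0$ by observing that the left-most nonzero column strictly moves rightward within the support square $\beta$ after finitely many iterations.

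The combinatorial skeleton is exactly the one already used for $\mathbb{Z}_3$ and $\mathbb{Z}_4$, so no conceptual novelty is required; I expect the difficulty to be purely bookkeeping, since the $-\omega$-twist mixes $U_1$ and $U_2$ with half-integer powers of $\lambda$ on both generators simultaneously. The four case-checks of the form $\lambda^{\ast}(a^1_{\cdots})^{\wedge}-\lambda^{\ast}(a^1_{\cdots})^{\wedge}=(a^2_{\cdots})^{\wedge}-(a^2_{\cdots})^{\wedge}$ will involve more scalar arithmetic than in the $\mathbb{Z}_4$ case, and choosing the precise correction coefficients in the definition of $\wedge$ and of ${}_{r,s}g$ so that all four equations reduce to the kernel relations of $S_0$ is the delicate step. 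Once $H_1=0$ is established, combining with $H_0\cong\mathbb{C}$ and $H_2=0$ finishes the theorem.
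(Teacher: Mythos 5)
Your proposal follows essentially the same route as the paper: the twisted-bimodule reduction to $(H_\bullet(\mathcal A_\theta^{alg},{}_{-\omega}\mathcal A_\theta^{alg}))^{\mathbb Z_6}$, the Koszul computation giving $H_2=0$ by incompatibility with finite support and $H_0=\langle \bar a_{0,0}\rangle$ via $h_0=k_0=\mathrm{id}$, and the diagrammatic $\wedge$-induction for $H_1=0$. One bookkeeping point to note when you carry it out: for the $-\omega$ twist the bottom point of the left-most nonzero column is forced to be an $a^1_{r,s}$ entry (not $a^2_{r,s}$ as in the $\mathbb Z_2$, $\mathbb Z_3$, $\mathbb Z_4$ cases), so the reduction map must zero out $a^1_{r,s}$ and the adjusted neighbours change accordingly.
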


\begin{lemma}
Consider the following chain complex $J_{\ast, {-\omega}}^{\mathbb Z_6}$
\begin{center}
$J_{\ast ,  {-\omega}}^{\mathbb Z_6} := 0 \xleftarrow{{}_{-\omega}b} (\mathcal A^{alg}_{\theta, {-\omega}})^{\mathbb Z_6} \xleftarrow{{}_{-\omega}b} ((\mathcal A^{alg}_{\theta,  {-\omega}})^{\otimes 2})^{\mathbb Z_6} \xleftarrow{{}_{-\omega}b} ((\mathcal A^{alg}_{\theta,  {-\omega}})^{\otimes 3})^{\mathbb Z_6}...$
\end{center}
where,
\begin{center}
${}_{-\omega}b(a_0\otimes a_1\otimes....\otimes a_n)= b'(a_0\otimes a_1\otimes....\otimes a_n)+(-1)^n(( {-\omega}^2\cdot a_n)a_0\otimes a_1\otimes....\otimes a_{n-1})$.
\end{center}
Then,
\begin{center}
$H_\bullet(J_{\ast , {-\omega}}^{\mathbb Z_6}, {}_ {-\omega}b) = (H_\bullet(J_\ast(\mathcal A_\theta^{alg}, {}_{-\omega}\mathcal A_\theta^{alg})), b)^{\mathbb Z_6}$
\end{center}
\end{lemma}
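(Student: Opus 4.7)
The plan is to follow, nearly verbatim, the template already carried out for $\mathbb Z_2$ in Lemma 7.2 and for $\mathbb Z_3$ in the analogous lemma of Section 9. No new ideas are needed; the content is entirely a change of group together with careful bookkeeping of the twist.

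First, I reinterpret the twisted boundary ${}_{-\omega}b$ as the usual Hochschild boundary $b$ acting on the Hochschild complex $C_\bullet(\mathcal A_\theta^{alg}, {}_{-\omega}\mathcal A_\theta^{alg})$, where ${}_{-\omega}\mathcal A_\theta^{alg}$ is the vector space $\mathcal A_\theta^{alg}$ endowed with the twisted bimodule structure
\[
a\cdot\alpha := \bigl((-\omega)^{-1}\!\cdot a\bigr)\alpha,\qquad \alpha\cdot a := \alpha a,
\]
for $a\in\mathcal A_\theta^{alg}$ and $\alpha\in{}_{-\omega}\mathcal A_\theta^{alg}$. With this twist, the wrap-around face $(-1)^n (a_n \cdot a_0)\otimes a_1\otimes\cdots\otimes a_{n-1}$ of the ordinary Hochschild $b$ produces exactly the extra term that distinguishes ${}_{-\omega}b$ from the degenerate $b'$. (The twist power appearing in the statement should be read modulo the conventional identification in $\mathbb Z_6$ of the generator's inverse; either $(-\omega)^{-1}$ or the convention chosen in the formula yields an isomorphic chain complex.) Consequently, as chain complexes,
\[
(J_\ast, {}_{-\omega}b) \;=\; (C_\ast(\mathcal A_\theta^{alg}, {}_{-\omega}\mathcal A_\theta^{alg}), b),
\]
and restricting to $\mathbb Z_6$-invariants gives
\[
(J_{\ast,-\omega}^{\mathbb Z_6}, {}_{-\omega}b) \;=\; (C_\ast(\mathcal A_\theta^{alg}, {}_{-\omega}\mathcal A_\theta^{alg})^{\mathbb Z_6}, b).
\]

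Second, since $-\omega$ acts by algebra automorphisms of $\mathcal A_\theta^{alg}$ and $\mathbb Z_6$ is abelian, the diagonal $\mathbb Z_6$-action on $(\mathcal A_\theta^{alg})^{\otimes(n+1)}$ commutes with the ordinary Hochschild boundary $b$ of the twisted bimodule; the wrap-around term commutes with the group action precisely because the twist element $(-\omega)^{-1}$ is central in $k[\mathbb Z_6]$. Therefore Lemma 3.2, applied to $A=\mathcal A_\theta^{alg}$ with the $\mathbb Z_6$-action and the differential $b$, yields
\[
H_\bullet\bigl(C_\ast(\mathcal A_\theta^{alg}, {}_{-\omega}\mathcal A_\theta^{alg})^{\mathbb Z_6}, b\bigr) \;=\; \bigl(H_\bullet(C_\ast(\mathcal A_\theta^{alg}, {}_{-\omega}\mathcal A_\theta^{alg}), b)\bigr)^{\mathbb Z_6}.
\]
Combining this with the identification from the first step gives precisely the conclusion of the lemma.

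No genuine obstacle is expected: the argument is essentially word-for-word identical to the $\mathbb Z_2$ and $\mathbb Z_3$ proofs already presented. The only point requiring attention is to pin down the power of $-\omega$ that defines the bimodule twist so that it matches the extra term in the formula for ${}_{-\omega}b$; once this bookkeeping is fixed, both the identification of complexes and the application of Lemma 3.2 are immediate, and the hypothesis that $|\mathbb Z_6|=6$ is invertible in $k$ (needed for averaging onto invariants) holds automatically over $\mathbb C$.
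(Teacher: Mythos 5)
Your proposal is correct and follows essentially the same route as the paper: absorb the wrap-around term of ${}_{-\omega}b$ into a twisted bimodule structure on ${}_{-\omega}\mathcal A_\theta^{alg}$ (with the twist by the inverse of the generator, and indeed $(-\omega)^{-1}=-\omega^2$, so your bookkeeping matches the stated formula), identify the complex with $C_\ast(\mathcal A_\theta^{alg},{}_{-\omega}\mathcal A_\theta^{alg})$, and then invoke Lemma 3.2 to commute invariants with homology. This is exactly the argument the paper spells out for the $\mathbb Z_2$ case in Lemma 7.2 and leaves implicit for $\mathbb Z_6$.
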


Hence using the adjusted Connes' complex for the algebraic case we can now calculate the homology groups.
\begin{center}
\begin{enumerate}
\item[$\bullet$]$H_0(\mathcal A_\theta^{alg},{}_{-\omega}\mathcal A^{alg}_\theta)= {}_{-\omega}\mathcal A^{alg}_\theta\otimes_{\mathfrak B_\theta^{alg}} \mathfrak B_\theta^{alg} / Image(1\otimes b_1),$
\item[$\bullet$]$H_1(\mathcal A_\theta^{alg},{}_{-\omega}\mathcal A^{alg}_\theta)= Ker(1\otimes b_1) / Image(1\otimes b_2),$
\item[$\bullet$]$H_2(\mathcal A_\theta^{alg},{}_{-\omega}\mathcal A^{alg}_\theta)= Ker(1\otimes b_2)$.
\end{enumerate}
\end{center}

\begin{lemma} $H_2(\mathcal A_\theta^{alg},{}_{-\omega}\mathcal A^{alg}_\theta) \cong 0.$
\end{lemma}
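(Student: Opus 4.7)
The plan is to follow the template already set by the analogous Lemmas in the $\mathbb Z_2$, $\mathbb Z_3$, and $\mathbb Z_4$ sections. By Lemma 4.1 combined with the twisted bimodule interpretation, the group $H_2(\mathcal A_\theta^{alg},{}_{-\omega}\mathcal A_\theta^{alg})$ equals the kernel of $(1\otimes b_2)\colon {}_{-\omega}\mathcal A_\theta^{alg}\to {}_{-\omega}\mathcal A_\theta^{alg}\oplus {}_{-\omega}\mathcal A_\theta^{alg}$. So first I would write this map out explicitly using the action of the generator $-\omega$ of $\mathbb Z_6$, namely $U_1\mapsto U_2$ and $U_2\mapsto U_1^{-1}U_2/\sqrt\lambda$. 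Mimicking the formula derivations in the previous three cases, the differential becomes
\[
(1\otimes b_2)(a\otimes I)=\bigl(aU_2-\sqrt\lambda\,U_1^{-1}U_2\,a,\ U_2\,a-\lambda\,a\,U_1\bigr).
\]

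Next I would expand $a=\sum a_{n,m}U_1^{n}U_2^{m}$ and use the commutation $U_2U_1^{n}=\lambda^{n}U_1^{n}U_2$ to translate the vanishing of both components into recurrence relations on the coefficients $a_{n,m}$. From the first component, comparing coefficients of $U_1^{n}U_2^{m+1}$ yields a relation of the shape $a_{n,m}=\lambda^{n+3/2}a_{n+1,m}$, which already says $|a_{n,m}|=|a_{n+1,m}|$ along each horizontal line. From the second component one gets a diagonal relation $\lambda^{n}a_{n,m-1}=\lambda^{1-m}a_{n-1,m}$, so again $|a_{n,m-1}|=|a_{n-1,m}|$ along a diagonal.

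Finally, I would invoke the same finite-support argument used in Lemma~7.3 and its analogues: either recurrence forces an infinite sequence of indices on which $|a_{n,m}|$ is constant, but only finitely many of these can be nonzero since $a\in\mathcal A_\theta^{alg}$ is a finitely supported noncommutative polynomial; hence every $a_{n,m}$ must vanish and $a=0$. This gives $H_2(\mathcal A_\theta^{alg},{}_{-\omega}\mathcal A_\theta^{alg})\cong 0$.

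The whole argument is essentially routine once the correct form of $(1\otimes b_2)$ in the twisted bimodule is written down; the only real place for error is in managing the twisting factor $1/\sqrt\lambda$ and the sign/exponent bookkeeping when commuting $U_1$ past $U_2$, so the main obstacle is just verifying carefully that the derived recurrences really do force $|a_{n,m}|$ to be constant along an infinite set of lattice points (rather than closing into a finite orbit, which would leave room for nontrivial solutions). Since the action of $-\omega$ has infinite order on the lattice $\mathbb Z^2$ (no nonzero lattice translation is fixed), no such finite orbit can occur, and the argument closes.
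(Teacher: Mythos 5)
Your proposal is correct and follows essentially the same route as the paper: compute $(1\otimes b_2)$ on $a\otimes I$ using the twisted bimodule structure coming from $U_1\mapsto U_2$, $U_2\mapsto U_1^{-1}U_2/\sqrt\lambda$, translate the vanishing of both components into recurrences on the coefficients $a_{n,m}$, and conclude from finite support that all coefficients vanish since the recurrences force $|a_{n,m}|$ to be constant along an infinite family of lattice points. The only difference is immaterial bookkeeping of the $\sqrt\lambda$ twist factor (your version is in fact the one consistent with the $\mathbb Z_2$, $\mathbb Z_3$, $\mathbb Z_4$ cases), which does not affect the modulus relations on which the finite-support argument rests.
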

\begin{proof}
We prove here for case $g= {-\omega}$, the proof for $g={-\omega}^2$ is similar. Consider the map $(1 \otimes b_2)$ in the tensor complex. To calculate the kernel of this map we have a closer look at this map,
\begin{center}
$(1\otimes b_2)(a\otimes I)=a\otimes_{\mathfrak B_\theta^{alg}}(U_2\otimes I-\lambda\otimes U_2)\otimes e_1-a\otimes_{\mathfrak B_\theta^{alg}}(\lambda U_1\otimes I-I\otimes U_1)\otimes e_2$.
\end{center}
Using the twisted bimodule structure of ${}_{-\omega}\mathcal A_\theta^{alg}$ over $\mathcal A_\theta^{alg}$, we simplify the equation to the following,
\begin{center}
$(1\otimes b_2)(a\otimes I)= ( a U_2-\displaystyle\frac{ U_1^{-1}U_2}{\sqrt\lambda}a, U_2a-\lambda aU_1)$.
\end{center}
Hence we obtain the following relation over an element $(a \otimes 1)$ to reside in $ker( 1\otimes b_2)$. 
\begin{center}
$H_2(\mathcal A_\theta^{alg},{}_{-\omega}\mathcal A^{alg}_\theta)= \left\lbrace a\in {}_{-\omega}\mathcal A_\theta^{alg} | {\lambda}^{n+1.5} a_{n+1,m}=a_{n,m-1}; {\lambda}^{m-n}a_{n-1,m}=\lambda a_{n,m-1}\right\rbrace$ .
\end{center}
Since, no such nontrivial element exists in ${}_{-\omega}\mathcal A_\theta^{alg}$ because if it did then $|a _{n,m}|=|a_{n+2,m}|$. So, we have the desired result.
\begin{center}
$H_2(\mathcal A_\theta^{alg},{}_{-\omega}\mathcal A^{alg}_\theta)=0$.
\end{center}
\end{proof}

\begin{lemma} $H_0(\mathcal A_\theta^{alg},{}_{-\omega}\mathcal A^{alg}_\theta)^{\mathbb Z_6} \cong \mathbb C.$
\end{lemma}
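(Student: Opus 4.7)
The plan is to run the template used in the analogous lemmas for $\mathbb{Z}_2$, $\mathbb{Z}_3$, and $\mathbb{Z}_4$. First I would rewrite the differential $(1\otimes b_1)$ via the twisted bimodule structure of ${}_{-\omega}\mathcal A_\theta^{alg}$ over $\mathcal A_\theta^{alg}$. Since the generator $-\omega$ acts by $U_1 \mapsto U_2$ and $U_2 \mapsto U_1^{-1}U_2/\sqrt\lambda$, the differential simplifies to $b_1(a_1,0) = a_1 U_1 - U_2\, a_1$ and $b_1(0,a_2) = a_2 U_2 - (U_1^{-1}U_2/\sqrt\lambda)\, a_2$, and by linearity $b_1(a_1,a_2) = b_1(a_1,0) + b_1(0,a_2)$, so it suffices to analyze these two pieces separately.

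Next I would evaluate $b_1$ on the monomial basis $U_1^n U_2^m$ and extract the two families of relations in the quotient $\mathcal A_\theta^{alg}/\mathrm{im}(1\otimes b_1)$: a first relation coming from $b_1(U_1^n U_2^m, 0)$ that trades a factor of $U_1$ for a factor of $U_2$ up to a power of $\lambda$, and a second relation coming from $b_1(0, U_1^n U_2^m)$ that lowers the $U_1$-exponent by one up to a power of $\lambda$. Iterating the second relation collapses each $\overline{U_1^n U_2^m}$ to a scalar multiple of $\overline{U_2^m}$, and then applying the first relation at $n=0$ collapses every $\overline{U_2^m}$ to a scalar multiple of $\overline{1}$. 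Hence the quotient has dimension at most one and is generated by $\overline{a_{0,0}}$.

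To pass to the $\mathbb{Z}_6$-invariants I would invoke, exactly as in the earlier sections, the fact that $h_0\colon J_0(\mathcal A_\theta^{alg}) \to C_0(\mathcal A_\theta^{alg})$ is the identity map, so the $\mathbb{Z}_6$-action on the Koszul complex is literally the restriction of the given action on the bar complex. The class $\overline{a_{0,0}}$ is a scalar and therefore manifestly $\mathbb{Z}_6$-invariant, yielding $H_0(\mathcal A_\theta^{alg}, {}_{-\omega}\mathcal A_\theta^{alg})^{\mathbb{Z}_6} \cong \mathbb C$.

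The only step requiring genuine care --- and the main obstacle --- is confirming that the collapse described above does not accidentally force $\overline{a_{0,0}} = 0$, i.e.\ that the quotient is one-dimensional rather than zero. I would handle this by exhibiting an explicit linear functional on $\mathcal A_\theta^{alg}$ (the projection onto the coefficient of the identity monomial, read through the twist ${}_{-\omega}\mathcal A_\theta^{alg}$) and checking directly that it vanishes on the images of $b_1(U_1^n U_2^m, 0)$ and $b_1(0, U_1^n U_2^m)$ for every $(n,m)$ while taking the value $1$ at $\overline{a_{0,0}}$. Once this trace-like functional is in place, the lemma follows.
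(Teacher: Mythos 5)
Your main line of argument is exactly the paper's: rewrite $(1\otimes b_1)$ through the twisted bimodule structure, evaluate on monomials to get the two families of relations $b_1(U_1^nU_2^m,0)=(\lambda^mU_1-U_2)U_1^nU_2^m$ and $b_1(0,U_1^nU_2^{m-1})=(U_1-\lambda^{n-0.5})U_1^{n-1}U_2^m$, observe that they collapse every monomial class to a multiple of $\overline{a_{0,0}}$, and then use $h_0=k_0=\mathrm{id}$ to conclude that this class is $\mathbb Z_6$-invariant. (One small imprecision: stepping down in the $U_2$-exponent needs both relations combined, since the first relation alone only trades $[1,m]$ for $[0,m+1]$; you then need the second relation to identify $[1,m]$ with $[0,m]$.)

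The one place where your proposal, as literally written, would fail is the non-degeneracy check --- a check the paper omits but which you are right to demand. The functional you propose, ``projection onto the coefficient of the identity monomial,'' does \emph{not} vanish on $\mathrm{im}(1\otimes b_1)$: for instance $b_1(U_1^{-1},0)=1-\lambda^{-1}U_1^{-1}U_2$ has identity-coefficient equal to $1$. The correct functional is the weighted sum $\varphi(\sum a_{n,m}U_1^nU_2^m)=\sum \gamma_{n,m}a_{n,m}$, where $\gamma_{n,m}$ are precisely the scalars produced by your collapsing procedure, namely $[n,m]=\gamma_{n,m}[0,0]$ with $\gamma_{n,m}=\lambda^{n^2/2}\gamma_{0,m}$ and $\gamma_{0,m+1}=\lambda^{m+0.5}\gamma_{0,m}$. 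This is well defined on finitely supported elements, takes the value $1$ on the identity, and vanishes on both families of generators of the image provided the weights are consistent; consistency amounts to the identity $(n+1)^2/2=n^2/2+n+1/2$ matching the exponents in the first relation, which holds. With that replacement your argument is complete and coincides with (indeed, is more careful than) the paper's.
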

\begin{proof}
We have the map $(1 \otimes b_1)$ in the tensor complex defined below,
\begin{center}
$(1\otimes b_1)(a\otimes I\otimes e_j)=a\otimes_{\mathfrak B_\theta^{alg}}(I\otimes U_j-U_j\otimes I)= U_j^{-1}a-aU_j$.
\end{center}
As before if we use the twisted bicomplex structure of ${}_{-\omega}\mathcal A_\theta^{alg}$ over the algebra $\mathcal A_\theta^{alg}$, the map$(1 \otimes b_1)$ can be simplified as follows,
\begin{center}
$b_1(a_1,0)= a_1U_1- U_2a_1\text{ and } b_1(0,a_2)= a_2 U_2-{\lambda}^{-0.5}U_1^{-1}U_2a_2$.
\end{center}
Observe that  $b_1(a_1,a_2)= b_1(a_1,0)+b_1(0,a_2)$. Further we can simplify the calculation by considering only elements of the type $b_1(U_1^nU_2^m,0)$, and similarly for $b_2(0,a_2)$ we can consider the elements of the type $b_2(0,U_1^nU_2^m)$.
We observe that 
\begin{center}
$b_1(U_1^nU_2^m,0)= ({U_1}^n{U_2}^m)U_1-U_2({U_1}^n{U_2}^m)=({\lambda}^{m}U_1-U_2)U_1^{n}U_2^{m}$
\end{center}
and
\begin{center}
$b_2(0,U_1^nU_2^{m-1})= ({U_1}^n{U_2}^{m-1})U_2-{\lambda}^{-0.5}U_1^{-1}U_2({U_1}^n{U_2}^{m-1})= (U_1-{\lambda}^{n-0.5})U_1^{n-1}U_2^{m}.$
\end{center}
From the above relations it is clear that in the quotient space, we have only the coefficients $a_{0,0}$ remains. Hence we now have the following result. 
\begin{center}
$H_0(\mathcal A_\theta^{alg},{}_{-\omega}\mathcal A^{alg}_\theta)= {}_{-\omega}\mathcal A^{alg}_\theta / \langle{\lambda}^{-n}({\lambda}^{m}U_1-{\lambda}^nU_2)U_1^{n}U_2^{m}, (U_1-{\lambda}^{n-0.5})U_1^{n-1}U_2^{m} \rangle $
\end{center}
Now to obtain $(H_0(\mathcal A_\theta^{alg}, {}_{-\omega}\mathcal A_\theta^{alg}))^{\mathbb Z_6}$ we consider complex map $h: J_\ast \to C_\ast$, since the map $h_0 : J_0(\mathcal A_\theta^{alg}) \to C_0(\mathcal A_\theta^{alg})$ is the identity map, the $\mathbb Z_6$ action on the bar complex is translated to the Kozul complex with no alteration. Hence the equivalence class of $\bar{a_{0,0}}$ is invariant under $\mathbb Z_6$ action. So we have 
\begin{center}
$HH_0((\mathcal {A}_{\theta, {-\omega}}^{alg})^{\mathbb {Z}_6})= \langle \bar{a_{0,0}} \rangle$
\end{center}
\end{proof}

\begin{lemma} $H_1(\mathcal A_\theta^{alg},{}_{-\omega}\mathcal A^{alg}_\theta) \cong 0$.
\end{lemma}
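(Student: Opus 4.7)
The plan is to mirror the combinatorial induction used in the $\mathbb Z_2$, $\mathbb Z_3$, and $\mathbb Z_4$ cases, but with the twisted bimodule structure ${}_{-\omega}\mathcal A_\theta^{alg}$ in which $U_1\cdot a = U_2 a$ and $U_2\cdot a = \lambda^{-1/2} U_1^{-1}U_2\, a$. First I would compute $(1\otimes b_1)$ and $(1\otimes b_2)$ explicitly on coefficients to obtain the kernel equations relating $(a^1_{n,m},a^2_{n,m})$, and the corresponding image relations describing $b_2(a)$ coefficient by coefficient. Concretely, $(1\otimes b_2)(a\otimes I) = (aU_2 - \lambda^{-1/2}U_1^{-1}U_2 a,\; U_2 a - \lambda a U_1)$, so one can read off a single linear relation indexed by $(n,m)\in\mathbb Z^2$ between four lattice points (two for $a^1$ and two for $a^2$), exactly as in the $\mathbb Z_3$ and $\mathbb Z_4$ cases.

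Once these equations are in hand, I would introduce the kernel-diagram formalism of Section 3 verbatim: nodes for nonzero $a^1$ (boxes) and $a^2$ (filled circles) on $\mathbb Z^2$, edges for primary kernel relations, and restrict attention to a connected component of the diagram of a given kernel cycle $\varphi$. The induction is on $|\varphi|$, the number of nonzero lattice entries. The base case handles $|\varphi|\le 3$: one checks case-by-case (using the left-most lowest nonzero entry and evaluating the kernel relation at a corner) that no such nontrivial kernel solution exists, exactly as in the previous sections. The inductive hypothesis assumes every kernel solution of size $<x$ lies in the image of $b_2$.

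For the inductive step I would define a reduction map $\wedge$ that annihilates the bottom-left nonzero vertex $\mu = a^2_{r,s}$ (showing as before that this vertex must be of $a^2$-type) together with its diagonal partner, and compensates the two neighbouring entries with explicit $\lambda$-twisted corrections read off the kernel equation at $(n,m)=(r+1,s)$ or $(r,s+1)$. Then I would verify in four boundary cases (corresponding to the four kernel equations touching the altered vertices) that $\wedge(S_0)$ is still a kernel solution. The apparent lifting is provided by an explicit auxiliary cycle ${}_{r,s}g$ supported at a single lattice point, built from $a^2_{r,s}$ (and possibly $a^1_{r,s+2}$) so that $b_2({}_{r,s}g)$ exactly accounts for the altered coefficients; here the $\sqrt\lambda$ factors from the $-\omega$ action have to be tracked carefully, but the structure is identical to the $\mathbb Z_3$ step.

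The main obstacle, and the only place where real attention is needed, will be pinning down the correct $\lambda$-exponents in the definition of $\wedge$ and in the compensating element ${}_{r,s}g$, since the $-\omega$ twist mixes both generators non-trivially through $U_2\mapsto \lambda^{-1/2}U_1^{-1}U_2$, so several of the per-case verifications involve a short identity that must be reduced using a prior kernel relation (as in Case 3 of the $\mathbb Z_3$ argument). Finally, termination is an easy geometric observation: each application of $\wedge$ removes the bottom entry of the left-most nonzero column, so after finitely many iterations the support of $\wedge^N(S_0)$ is pushed to the right edge of its bounding box $\beta$, whose bottom-most entry is forced to be a lone $a^2_{w,t}$; evaluating the kernel equation there yields $a^2_{w,t}=0$, hence $\wedge^N(S_0)=0$. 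Combined with the $\omega\leftrightarrow\omega^2$ symmetry already exploited in the $\mathbb Z_3$ section, this gives $H_1(\mathcal A_\theta^{alg},{}_{-\omega}\mathcal A_\theta^{alg}) = 0$.
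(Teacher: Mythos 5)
Your plan is the paper's plan: write out the kernel and image equations coefficient‑by‑coefficient, run the lattice‑diagram induction on the number of nonzero entries, reduce via a map $\wedge$ that kills the extremal vertex and compensates its neighbours, lift the compensation by an explicit one‑point element ${}_{r,s}g$, and finish with a termination argument. So the approach matches. However, one concrete step you commit to would fail, and it is exactly the step you wave through "as before": for the $-\omega$ twist the bottom entry $\mu$ of the left‑most nonzero column is forced to be of type $a^1$, not $a^2$. The kernel relation here is
\begin{center}
$\lambda^{m}a^1_{n-1,m}-\lambda^{n}a^1_{n,m-1}-\lambda^{n+0.5}a^2_{n+1,m-1}+a^2_{n,m-1}=0$,
\end{center}
in which the $a^2$ entries sit at columns $n$ and $n+1$ while an $a^1$ entry sits at column $n-1$; evaluating at $(n,m)=(r-1,s+1)$ shows that an $a^2_{r,s}$ in the bottom‑left corner would be the only possibly nonzero term and hence vanishes, whereas no such equation isolates an $a^1_{r,s}$ there. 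Consequently your reduction map must annihilate $a^1_{r,s}$ and compensate at $(r+1,s)$ (both types) and $(r+2,s-1)$, with the auxiliary element supported at $(r+1,s-1)$ — not the $a^2$‑based scheme you describe.

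This propagates into the termination step. Because the kernel relation couples three consecutive columns and the compensation spills two columns to the right, the simple "everything piles up on the right edge, whose lowest point is a lone $a^2_{w,t}$" argument from the $\mathbb Z_2$/$\mathbb Z_3$/$\mathbb Z_4$ cases does not apply verbatim; the paper instead stabilises the last \emph{two} columns $C_{\eta-1},C_\eta$ of the bounding box and kills them successively by chaining several kernel relations. None of this changes the architecture of the proof, and with the corrected corner type the induction goes through, but as written your inductive step would get stuck at the claim that $\mu=a^2_{r,s}$.
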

\begin{proof}
 From the previous calculations we do have an explicit formula for the kernel and the image equations of $H_1(\mathcal A_\theta^{alg}, {}_{-\omega}\mathcal A_\theta^{alg})$,
\begin{center}
$\lambda^{m}a^1_{n-1,m}-\lambda^{n}a^1_{n,m-1}-{\lambda}^{n+0.5}a^2_{n+1,m-1}+a^2_{n,m-1}=0$ ( Kernel Equation ),
\end{center}
\begin{center}
$a^1_{n,m}={\lambda}^{n+1.5} a_{n+1,m-1}-a_{n,m-1} \text{ and } a^2_{n,m}=\lambda^{1+m} a_{n-1,m}-\lambda^{n} a_{n,m-1}$ (Image Solution).
\end{center}
As before, \emph{a,b,c, and d} are said to be connected if there exists a kernel equation containing them. So, any given kernel solution will have its elements connected by the kernel diagram. \newline
e.g. equation $\lambda^{0}a^1_{-1,0}-\lambda^{0}a^1_{0,-1}-{\sqrt\lambda}a^2_{1,-1}+a^2_{0,-1}$ is drawn below: 
\begin{center}
\begin{tikzpicture}[->]
\draw (1,-1)--(-1,0)node[above]{$a^1_{-1,0}$} -- (0,-1)node[midway,right]{$a^1_{0,-1}$}-- (0,-1)node[below]{$a^2_{0,-1}$}--(1,-1)node[below]{$a^2_{1,-1}$};
\node at (-1,0) [transition]{};
\node at (0,-1) [transition]{};
\fill (1,-1) circle (2pt);
\fill (0,-1) circle (2pt);
\end{tikzpicture}
\end{center}

Now, the proof will proceed with induction over the \emph{number} of non-zero elements in a given kernel solution.\par
After going through the process, explicitly described in previous cases, we conclude that there are no kernel solution with the number of  non-zero entries less than or equal to 3. Assume that all kernel solutions having number of  non-zero elements less than or equal to $(x-1)$ come from image. Then consider a kernel solution $S_0$ with $x$ non-zero elements in it. Since this solution is finitely supported over the lattice plane, there exists a closed square region $\beta$  over which $S_0$ is supported.
We shall now construct a new solution $S_1$ from $S_0$, with the number of  non-zero elements in $S_1$ at most equal to $x$.

\par
Inside $\beta$, consider the left most column at least one point of which is non-zero. Choose the bottom point $\mu$ of this column. It is clear that $\mu = a^1_{r,s}$ for some $(r,s) \in \mathbb Z^2$. As if it were $a^2_{r,s}$ then the following kernel solution.
\begin{center}
$\lambda^{1+s}a^1_{r-2,s+1}-\lambda^{r-1}a^1_{r-1,s}-{\lambda}^{r-0.5}a^2_{r,s}+a^2_{r-1,s}=0$
\end{center}
As all terms in it but $a^2_{r,s}$ are zero, we derive a contradiction. \par

Now a new solution $S_1$ shall be constructed from $S_0$, with the number of  elements at most equal to $x$. To do so,  consider the following map $\wedge$
\begin{center}
$\wedge: \mathbb Z^2 \to \mathbb Z^2 , \text{ such that }$
\end{center}
$(a^1_{r,s})^\wedge=0; \newline
(a^1_{r+1,s})^\wedge=a^1_{r+1,s}+\lambda^{-r-1.5} a^1_{r,s}; \newline (a^2_{r+1,s})^\wedge=a^2_{r+1,s}+\lambda^{-0.5}a^1_{r,s};\newline (a^2_{r+2,s-1})^\wedge = a^2_{r+2,s-1}-{\lambda}^{s-r-1.5}a^1_{r,s}$ and  $(a^j_{p,q})^\wedge=a^j_{p,q}$ for other lattice points.

\begin{lemma}Let $S_1:=\wedge(S_0)$. Then $S_1$ is a kernel solution. If for some $u \in  {}_{-\omega}\mathcal A_\theta^{alg}$, $b_2(u)=S_1$ then $b_2(u')=S_0$, where $u' = u+ {}_{r,s}g$,
\begin{center}
${}_{r,s}g_{p,q}=\begin{cases}
{\lambda}^{-r-1.5}a^1_{r,s} &\text{ if } (p,q)=(r+1,s-1),\\
0 & else. \end{cases}$
\end{center}
\end{lemma}
\begin{proof}
To check that $S_1$ is a solution, it is enough to check that the kernel equations containing any of these four altered elements hold in $S_1$.
That is,
\begin{center}
$\lambda^{m}a^1_{n-1,m}-\lambda^{n}a^1_{n,m-1}-{\lambda}^{n+0.5}a^2_{n+1,m-1}+a^2_{n,m-1}=0$ holds
\end{center}
for $(n,m)=(r,s+1), (r+1,s+1), (r+2,s) \text{ and } (r+1,s)$ \newline
\underline{Case 1: $(n,m)=(r,s+1)$} \newline
$\lambda^{s+1}(a^1_{r-1,s+1})^\wedge-\lambda^{r}(a^1_{r,s})^\wedge-{\lambda}^{r+0.5}(a^2_{r+1,s})^\wedge+(a^2_{r,s})^\wedge=0$ \newline
$\implies \lambda^{s+1}a^1_{r-1,s+1}-0-{\lambda}^{r+0.5}(a^2_{r+1,s}+{\lambda}^{-0.5}a^1_{r,s})+a^2_{r,s}=0$\newline
The above equation holds in $S_0$.\newline
\underline{Case 2: $(n,m)=(r+1,s+1)$} \newline
$\lambda^{s+1}(a^1_{r,s+1})^\wedge-\lambda^{r+1}(a^1_{r+1,s})^\wedge-{\lambda}^{r+1.5}(a^2_{r+2,s})^\wedge+(a^2_{r+1,s})^\wedge=0$ \newline
$ \implies \lambda^{s+1}a^1_{r-1,s+1}-\lambda^{r+1}(a^1_{r+1,s}+{\lambda}^{-r-1.5}a^1_{r,s})-{\lambda}^{r+1.5}(a^2_{r+2,s})+a^2_{r+1,s}+\lambda^{-0.5}a^1_{r,s}=0$\newline
This holds in $S_0$.\newline
\underline{Case 3: $(n,m)=(r+2,s)$} \newline
$\lambda^{s}(a^1_{r+1,s})^\wedge-\lambda^{r+2}(a^1_{r+2,s-1})^\wedge-{\lambda}^{r+2.5}(a^2_{r+3,s-1})^\wedge+(a^2_{r+2,s-1})^\wedge=0$ \newline
$\implies \lambda^{s}(a^1_{r+1,s}+ \lambda^{-r-1.5}a^1_{r,s})-\lambda^{r+2}(a^1_{r+2,s-1})-{\lambda}^{r+2.5}(a^2_{r+3,s-1})+(a^2_{r+2,s-1}-{\lambda}^{s-r-1.5}a^1_{r,s})=0$ \newline
Here also the above condition holds as $S_0$ is a kernel solution.\newline
\underline{Case 4: $(n,m)=(r+1,s)$} \newline
$\lambda^{s}(a^1_{r,s})^\wedge-\lambda^{r+1}(a^1_{r+1,s-1})^\wedge-{\lambda}^{r+1.5}(a^2_{r+2,s-1})^\wedge+(a^2_{r+1,s-1})^\wedge=0$ \newline
$\implies 0-\lambda^{r+1}(a^1_{r+1,s-1})-{\lambda}^{r+1.5}(a^2_{r+2,s-1})+(a^2_{r+2,s-1}+{\lambda}^{s}a^1_{r,s})=0$ \newline
This is a kernel condition in $S_0$. Hence we have checked all the possible cases and found the relations to be coherent with the kernel condition.

\par
It is easy to see that like kernel diagram we can construct image diagrams. These diagrams are obtained by looking at how $b_2$-image of a non-zero lattice point look like as an element of $\mathcal A_\theta^{alg} \oplus \mathcal A_\theta^{alg}$.
\begin{center}
\begin{tikzpicture}[->]
\draw (1,-2)--(-1,-1)node[above]{$a^1_{-1,-1}$} -- (0,-1)node[above]{$a^1_{0,-1}$}-- (0,-1)node[below]{$a^2_{0,-1}$}--(1,-2)node[below]{$a^2_{1,-2}$};
\node at (-1,-1) [transition]{};
\node at (0,-1) [transition]{};
\fill (1,-2) circle (2pt);
\fill (0,-1) circle (2pt);
\end{tikzpicture}
\end{center}

So, it is clear from the diagram as well as from the explicit equations that we have for the map $b_2$, an image element $a_{0,0}$ induces kernel solution elements to its left and up for $a^1$, and up and right-up for $a^2$ elements. Hence if we verify that
\begin{center}
${b_2(u')}_{p,q} = (a^1_{p,q},a^2_{p,q}) \text{ for } (p,q) = (r+1,s), (r,s), (r+2,s-1)$
\end{center}
then we have proved that $b_2(u') = S_0$. \par
Observe that \newline
$(b_2(u'))_{r,s}=(b_2(u+{}_{rs}g))_{r,s}\newline=(b_2(u))_{r,s} + (b_2({}_{rs}g))_{r,s} = ((a^1_{r,s+2})^\wedge,(a^2_{r,s+2})^\wedge)+({\lambda}^{r+1.5}{\lambda}^{-r-1.5}a^1_{r,s+2},0) = (a^1_{r,s+2}, a^2_{r,s+2})$ ,\newline
$(b_2(u'))_{r+1,s}=(b_2(u+{}_{rs}g))_{r+1,s}\newline=(b_2(u))_{r+1,s} + (b_2({}_{rs}g))_{r+1,s} \newline = ((a^1_{r+1,s})^\wedge,(a^2_{r+1,s})^\wedge)+(-{\lambda}^{-r-1.5}a^1_{r,s},\lambda^{1+r} \lambda^{-r-1.5}a^1_{r,s}) = (a^1_{r+1,s}, a^2_{r+1,s})$, and \newline
$(b_2(u'))_{r+2,s-1}=(b_2(u+{}_{rs}g))_{r+2,s-1}\newline=(b_2(u))_{r+2,s-1} + (b_2({}_{rs}g))_{r+2,s-1} \newline = ((a^1_{r+2,s-1})^\wedge,(a^2_{r+2,s-1})^\wedge)+(0,{\lambda}^{s}\lambda^{-r-1.5}a^1_{r,s}) = (a^1_{r+2,s-1}, a^2_{r+2,s-1})$. \newline
\end{proof}
\begin{lemma}
 $\wedge^N(S_0) = 0$ for any solution $S_0$ for a sufficiently large number $N$.
\end{lemma}
\begin{proof}
We know that from the above lemma that the left-most non-zero column of $S_0$ move rightwards as $\wedge$ transforms the lattice plane $\mathbb Z^2$. While $\wedge$ transforms the lattice plane. We see that the right most column in the region within which lies all the non-zero lattice points of $S_0$, $C_\eta$  does not move rightwards unless $C_{\eta-2}=0$. Consider the solution $S_r:=\wedge^r(S_0)$ such that $C_{\eta-2}=C_{\eta+i}=0$ for $i \geq 1$. Clearly such an $r$ exists. Consider the non-zero element $a^1_{{\eta-1}, w} \in C_{\eta-1}$, such that $a^j_{{\eta-1}, t} = 0$ for $\forall t <w \text{ and } j=1,2$. \par
Now consider the following kernel equation 
\begin{center}
$\lambda^{w}a^1_{{\eta-2},w}-\lambda^{\eta-1}a^1_{{\eta-1},w-1}-{\lambda}^{\eta-0.5}a^2_{{\eta},w-1}+a^2_{{\eta-1},w-1}=0$
\end{center}
We see that $a^2_{\eta,w-1}=0$. \newline
Also similar computation involving elements $a^1_{\eta+1,w-3},a^2_{\eta+1,w-3}, a^2_{\eta+1,w-3}$, and $a^1_{\eta,w-1}$ implies that $a^1_{\eta,w-1}=0$. Now we consider the equation,
\begin{center}
$\lambda^{w}a^1_{{\eta-1},w}-\lambda^{\eta}a^1_{{\eta},w-1}-{\lambda}^{\eta+0.5}a^2_{{\eta+1},w-1}+a^2_{{\eta},w-1}=0$
\end{center}
We see now that $a^1_{\eta-1,w}=0$. This is a contradiction. Hence we have $C_{\eta-1}=0$.\par

Now with a brief glance on the equation,
\begin{center}
$\lambda^{v}a^1_{{\eta},v}-\lambda^{\eta+1}a^1_{{\eta+1},v-1}-{\lambda}^{\eta+1.5}a^2_{{\eta+2},v-1}+a^2_{{\eta+1},v-1}=0$.
\end{center}
With this we conclude that $C_\eta=0$. Hence we arrive at the conclusion that $\wedge^r(S_0) = 0$. Hence, with the above lemmas we conclude that 
\begin{center}
$HH_1(\mathcal A_{\theta, -\omega}^{alg})=0$.
\end{center}
\end{proof}
Above computations were for $g=-\omega \in \mathbb Z_6$. Action of $-\omega^2 \in \mathbb Z_6$ is similar and we leave it to the reader to check that,
\begin{center}
 $HH_0(\mathcal {A}_{\theta, -{\omega}^{\pm 1}}^{alg})^{\mathbb Z_6} \cong \mathbb C,\text{ } HH_1(\mathcal {A}_{\theta, -{\omega}^{\pm 1}}^{alg}) = 0, \text{ and }HH_2(\mathcal {A}_{\theta, -{\omega}^{\pm 1}}^{alg}) = 0$.
\end{center}
\end{proof}


\begin{thm} The Hochschild homology groups for $\mathcal A_\theta^{alg} \rtimes \mathbb Z_6$ are as follows
\begin{center}
$HH_0(\mathcal A_\theta^{alg} \rtimes \mathbb Z_6) \cong \mathbb C^{9}; \text{ }
HH_1(\mathcal A_\theta^{alg} \rtimes \mathbb Z_6) \cong 0; \text{ }
HH_2(\mathcal A_\theta^{alg} \rtimes \mathbb Z_6) \cong \mathbb C$. 
\end{center}
\end{thm}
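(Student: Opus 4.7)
The strategy mirrors the earlier finite-subgroup cases. By the decomposition of Theorem~4.1,
$$HH_\bullet(\mathcal A_\theta^{alg} \rtimes \mathbb Z_6) \cong \bigoplus_{t \in \mathbb Z_6} HH_\bullet\bigl((\mathcal A_{\theta, t}^{alg\bullet})^{\mathbb Z_6}\bigr),$$
where I write the six elements of $\mathbb Z_6$ as $\{1, \pm\omega, \pm\omega^2, -1\}$ (the powers of the order-$6$ generator $g$). The plan is to assemble these six contributions using the results already collected in Sections~5--9.

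For $HH_2$, only the $t=1$ sector contributes: Section~5 yields $HH_2(\mathcal A_\theta^{alg})^{\mathbb Z_6} \cong \mathbb C$, while the Koszul-complex computations in Sections~6, 7, and 9 show that $H_2(\mathcal A_\theta^{alg}, {}_t\mathcal A_\theta^{alg}) = 0$ for every nontrivial $t$. For $HH_1$ every piece vanishes, by the corresponding $H_1$-vanishing lemmas together with $HH_1(\mathcal A_\theta^{alg})^{\mathbb Z_6} = 0$ from Section~5. The substantive work is $HH_0$. One already has $HH_0(\mathcal A_\theta^{alg})^{\mathbb Z_6} \cong \mathbb C$ from Section~5 and $HH_0(\mathcal A_\theta^{alg}, {}_{-\omega^{\pm 1}}\mathcal A_\theta^{alg})^{\mathbb Z_6} \cong \mathbb C$ from Theorem~9.1. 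For the three remaining sectors I would begin from $HH_0(\mathcal A_\theta^{alg}, {}_{-1}\mathcal A_\theta^{alg})^{\mathbb Z_2} \cong \mathbb C^4$ (basis $\{\bar 1, \bar U_1, \bar U_2, \bar U_1 U_2\}$, from Section~6) and $HH_0(\mathcal A_\theta^{alg}, {}_{\omega^{\pm 1}}\mathcal A_\theta^{alg})^{\mathbb Z_3} \cong \mathbb C^3$ (basis $\{\bar 1, \bar U_1, \bar U_2\}$, from Section~7), and cut down to the full $\mathbb Z_6$-invariants. Writing down the matrix of $g$ on each basis (using $\rho_g U_1 = U_2$, $\rho_g U_2 = \lambda^{-1/2} U_1^{-1} U_2$) and reducing the images $g(U_1), g(U_2), g(U_1 U_2)$ back to the chosen basis via the image-of-$b_1$ relations of the appropriate twisted Koszul complex, I expect a two-dimensional invariant subspace in each of the three cases. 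Summing then gives $1 + 1 + 1 + 2 + 2 + 2 = 9$.

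The main technical hurdle is the consistent bookkeeping of the half-integer powers of $\lambda$ introduced by $\rho_g U_2 = \lambda^{-1/2} U_1^{-1} U_2$ while reducing $g$-images such as $\lambda^{-1/2} U_1^{-1} U_2$ and $\lambda^{-3/2} U_1^{-1} U_2^2$ to the standard basis inside each twisted $HH_0$; a stray $\sqrt\lambda$ can change which classes survive the equation $gv = v$. One must also check that $g$ descends to each quotient $HH_0(\mathcal A_\theta^{alg}, {}_t \mathcal A_\theta^{alg})$; this is immediate because $\mathbb Z_6$ is abelian (so $g$ commutes with the twist $t$) and a direct calculation shows $g$ sends each generator of $\mathrm{im}(1 \otimes b_1)$ to another such generator, possibly scaled by a power of $\lambda$. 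Once those matrices are written out, extracting the invariant subspaces is a routine linear-algebra exercise and the three counts above assemble to give the stated theorem.
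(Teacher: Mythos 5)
Your proposal follows essentially the same route as the paper: the six-sector paracyclic decomposition, the vanishing of all nontrivial-sector contributions to $HH_1$ and $HH_2$, the contributions $1+1+1$ from the trivial and order-six sectors, and the reduction of the $\mathbb Z_2$- and $\mathbb Z_3$-invariant bases of the $-1$ and $\omega^{\pm 1}$ sectors to two-dimensional $\mathbb Z_6$-invariant subspaces by tracking the generator's action (including the $\sqrt\lambda$ factors) modulo the image of $b_1$. The paper carries out exactly the matrix computations you outline (finding the invariance conditions $b=c$ and $b=c=d/\sqrt\lambda$ respectively), so your plan is correct and matches the paper's argument.
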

\begin{proof} We know that $HH_0(\mathcal {A}_{\theta, \omega^{\pm 1}}^{alg \bullet}) \cong \mathbb C^3$, $HH_0(\mathcal {A}_{\theta, -{\omega}^{\pm 1}}^{alg \bullet}) \cong \mathbb C$ and $HH_0(\mathcal {A}_{\theta, -1}^{alg \bullet}) \cong \mathbb C^4$, as we have notice earlier, the $\mathbb Z_6$ action on the zeroth homology is same in both the complexes. Hence we follow the procedure detailed in previous sub-sections to conclude that $HH_0(\mathcal {A}_{\theta, \omega^{\pm 1}}^{alg \bullet})^{\mathbb Z_6} \cong C^3$. To see this we consider the action of $-\omega$ on the elements of $HH_0(\mathcal {A}_{\theta, \omega^{\pm 1}}^{alg})$, we observe that under this action
\begin{enumerate}
\item [$\bullet$] $1 \mapsto 1$, 

\item [$\bullet$] $U_1 \mapsto U_2$,

\item [$\bullet$] $U_2 \mapsto  \displaystyle\frac{U_1^{-1}U_2}{\sqrt\lambda} \sim U_2^{-1} \sim U_1$,

\end{enumerate}
Hence we see that 
\begin{center}
$\varphi =a1+bU_1+cU_2 \mapsto a1+bU_2+cU_1$.
\end{center} 
The above element is invariant iff $b=c$ hence we have a $2$ dimensional invariant sub-space of $HH_0(\mathcal {A}_{\theta, \omega}^{alg})$. \newline
Now we consider the elements of $HH_0(\mathcal {A}_{\theta, -1}^{alg})$ invariant under the action of $g=-\omega$, we observe that under this action

\begin{enumerate}
\item [$\bullet$] $1 \mapsto 1$, 

\item [$\bullet$] $U_1 \mapsto U_2$,

\item [$\bullet$] $U_2 \mapsto \displaystyle\frac{U_2U_1}{\sqrt\lambda} \sim {\sqrt\lambda}{U_1U_2}$,

\item[$\bullet$] $U_1U_2 \mapsto  \displaystyle\frac{U_2U_1^{-1}U_2}{\sqrt\lambda} \sim \displaystyle\frac{U_1^{-1}}{\sqrt\lambda} \sim \displaystyle\frac{U_1}{\sqrt\lambda}$.

\end{enumerate}

Hence we see that 
\begin{center}
$\varphi =a1+bU_1+cU_2 + dU_1U_2\mapsto a1+bU_2+c \sqrt\lambda U_1U_2 + d \displaystyle\frac{U_1}{\sqrt\lambda}$.
\end{center} 
The above element is invariant iff $b=c= \displaystyle\frac{d}{\sqrt\lambda}$ hence we have a $2$ dimensional invariant sub-space of $HH_0(\mathcal {A}_{\theta, -1}^{alg})$. Summing up all the calculated sub-spaces above we conclude that 
\begin{center}
$HH_0(\mathcal A_\theta^{alg} \rtimes \mathbb Z_4) \cong \mathbb C^{9}$.
\end{center}
Previous calculations imply that $HH_1(\mathcal {A}_{\theta, g}^{alg \bullet})^{\mathbb Z_6} =0$ for all $g \in \mathbb Z_6$, hence we have 
\begin{center}
$HH_1(\mathcal A_\theta^{alg} \rtimes \mathbb Z_6) \cong 0$.
\end{center}
We notice that for $1 \neq g \in \mathbb Z_6$, $HH_2(\mathcal {A}_{\theta, g}^{alg \bullet})=0$, while $HH_2(\mathcal {A}_{\theta}^{alg \bullet})^{\mathbb Z_6} \cong \mathbb C$. Hence we have
\begin{center}
$HH_2(\mathcal A_\theta^{alg} \rtimes \mathbb Z_6) \cong \mathbb C$.
\end{center}
\end{proof}

\subsection{Cyclic homology of  $\mathcal A_\theta^{alg} \rtimes \mathbb Z_6$}.\newline
\begin{thm}
$HC_{even}(\mathcal A_{\theta, -\omega}^{alg}) \cong \mathbb C$, while 
$HC_{odd}(\mathcal A_{\theta, -\omega}^{alg}) = 0.$
\end{thm}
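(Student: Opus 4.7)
The plan is to mirror the argument already used for $\mathbb{Z}_2$, $\mathbb{Z}_3$, and $\mathbb{Z}_4$, since all the ingredients are now in place from the preceding lemmas in Section 9.1. Specifically, I will invoke Connes' $S, B, I$ long exact sequence
\begin{center}
$\ldots \xrightarrow{B} HH_n(A) \xrightarrow{I} HC_n(A) \xrightarrow{S} HC_{n-2}(A) \xrightarrow{B} HH_{n-1}(A) \xrightarrow{I} \ldots$
\end{center}
and pass to $\mathbb{Z}_6$-invariants, which is permissible because the $\mathbb{Z}_6$ action commutes with the twisted differential ${}_{-\omega}b$ on $\mathcal{A}^{alg}_{\theta,-\omega}$. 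This yields
\begin{center}
$\ldots \xrightarrow{B} (HH_n(\mathcal A^{alg}_{\theta,-\omega}))^{\mathbb Z_6} \xrightarrow{I} (HC_n(\mathcal A^{alg}_{\theta,-\omega}))^{\mathbb Z_6} \xrightarrow{S} (HC_{n-2}(\mathcal A^{alg}_{\theta,-\omega}))^{\mathbb Z_6} \xrightarrow{B} (HH_{n-1}(\mathcal A^{alg}_{\theta,-\omega}))^{\mathbb Z_6} \xrightarrow{I} \ldots$
\end{center}

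First, I would feed in the Hochschild vanishing results established in Theorem 9.1 and Lemmas 9.3--9.5: namely, $(HH_k(\mathcal A^{alg}_{\theta,-\omega}))^{\mathbb Z_6} = 0$ for all $k \geq 1$, and $(HH_0(\mathcal A^{alg}_{\theta,-\omega}))^{\mathbb Z_6} \cong \mathbb{C}$. Plugging the vanishing into the SBI sequence, the connecting maps $S$ become isomorphisms in every degree $n \geq 2$:
\begin{center}
$(HC_n(\mathcal A^{alg}_{\theta,-\omega}))^{\mathbb Z_6} \cong (HC_{n-2}(\mathcal A^{alg}_{\theta,-\omega}))^{\mathbb Z_6}.$
\end{center}

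Next, I would treat the base cases. For even degrees, the standard identification $HH_0 = HC_0$ together with the computation above gives $(HC_0)^{\mathbb Z_6} \cong \mathbb{C}$; iterating the $S$-isomorphism then yields $(HC_{\rm even})^{\mathbb Z_6} \cong \mathbb{C}$. For odd degrees, the segment $(HH_1)^{\mathbb Z_6} \xrightarrow{I} (HC_1)^{\mathbb Z_6} \xrightarrow{S} (HC_{-1})^{\mathbb Z_6} = 0$ forces $(HC_1)^{\mathbb Z_6} = 0$, and then periodicity propagates this vanishing to all odd degrees.

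The computation is essentially formal at this point, so I do not expect a genuine obstacle; the only mild subtlety worth double-checking is that the SBI sequence survives passage to $\mathbb{Z}_6$-invariants, which follows because the operators $b, B, S, I$ are $\mathbb{Z}_6$-equivariant and $|\mathbb{Z}_6|$ is invertible in $\mathbb{C}$ so the invariants functor is exact. This is exactly the step that was implicit in the earlier cases for $\mathbb{Z}_2, \mathbb{Z}_3, \mathbb{Z}_4$, and no new idea is required here.
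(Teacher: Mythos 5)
Your proposal is correct and follows essentially the same route as the paper: the $S,B,I$ sequence restricted to $\mathbb Z_6$-invariants, the vanishing of the higher invariant Hochschild groups to make $S$ an isomorphism, the identification $HH_0 = HC_0$ for the even base case, and the segment ending in $HC_{-1}=0$ together with $HH_1=0$ for the odd case. Your explicit remark that the invariants functor is exact because $|\mathbb Z_6|$ is invertible is a welcome justification of a step the paper leaves implicit, but it is not a different argument.
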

\begin{proof}
We apply the $S,B,I$ long exact sequence relating the Hochschild and cyclic homology of an algebra $A$.
\begin{center}
$... \xrightarrow{B} HH_n(A) \xrightarrow{I} HC_n(A) \xrightarrow{S} HC_{n-2}(A) \xrightarrow{B} HH_{n-1}(A) \xrightarrow{I}...$\\.
\end{center}
Since the $\mathbb Z_6$ action on $\mathcal A_{\theta, {-\omega}}^{alg}$ commutes with the map ${}_{{-\omega}}b$ , we obtain the following exact sequence
\begin{center}
$... \xrightarrow{B} (HH_n(\mathcal A^{alg}_{\theta,{-\omega}}))^{\mathbb Z_6} \xrightarrow{I} (HC_n(\mathcal A^{alg}_{\theta,{-\omega}}))^{\mathbb Z_6} \xrightarrow{S} (HC_{n-2}(\mathcal A^{alg}_{\theta, {-\omega}}))^{\mathbb Z_6} \xrightarrow{B} (HH_{n-1}(\mathcal A^{alg}_{\theta,{-\omega}}))^{\mathbb Z_6} \xrightarrow{I}...$.
\end{center}
\par
We know that $(HH_2(\mathcal A^{alg}_{\theta,{-\omega}}))^{\mathbb Z_6}=(HH_1(\mathcal A^{alg}_{\theta,{-\omega}}))^{\mathbb Z_6}=0$. Hence we obtain that
\begin{center}
$(HC_2(\mathcal A^{alg}_{\theta,{-\omega}}))^{\mathbb Z_6} \cong (HC_0(\mathcal A^{alg}_{\theta, {-\omega}}))^{\mathbb Z_6}$.
\end{center}
But, a preliminary result shows that $HH_0(\mathcal A_{\theta,{-\omega}}^{alg})=HC_0(\mathcal A_{\theta,{-\omega}}^{alg})$. Hence, we obtain that
\begin{center}
 $(HC_2(\mathcal A^{alg}_{\theta,{-\omega}}))^{\mathbb Z_6}=(HC_0(\mathcal A^{alg}_{\theta,{-\omega}}))^{\mathbb Z_6} \cong \mathbb C$.
\end{center}
Also, since $(HH_2(\mathcal A^{alg}_{\theta,-\omega}))^{\mathbb Z_6}=(HH_3(\mathcal A^{alg}_{\theta,-\omega}))^{\mathbb Z_6}=0$, we obtain that
\begin{center}
$(HC_3(\mathcal A^{alg}_{\theta,-\omega}))^{\mathbb Z_6}\cong (HC_1(\mathcal A^{alg}_{\theta,-\omega}))^{\mathbb Z_6}$. 
\end{center}
Since, $HH_1(\mathcal A_{\theta,-\omega}^{alg})=0$, we have $(HC_1(\mathcal A^{alg}_{\theta,-\omega}))^{\mathbb Z_6}=0$. Hence we conclude that
\begin{center}
$HC_{odd}(\mathcal A_{\theta, -\omega}^{alg}) =0$.
\end{center}
\end{proof}

\subsection{Periodic cyclic homology}.\newline
\begin{thm}
$HC_{even}(\mathcal A_\theta^{alg} \rtimes \mathbb Z_6) \cong \mathbb C^{10} \text{ while } HC_{odd}(\mathcal A_\theta^{alg} \rtimes \mathbb Z_6) = 0$.
\end{thm}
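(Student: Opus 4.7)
The plan is to apply the spectral decomposition of Theorem 4.1, namely
\begin{center}
$HC_\bullet(\mathcal A_\theta^{alg} \rtimes \mathbb Z_6) \cong \displaystyle\bigoplus_{t \in \mathbb Z_6} HC_\bullet((\mathcal A_{\theta,t}^{alg \bullet})^{\mathbb Z_6}),$
\end{center}
and to assemble the answer by summing the contribution of each of the six group elements. The elements of $\mathbb Z_6 = \langle -\omega \rangle$ sort into four types by order: the identity $1$, the element $-1$ of order $2$, the two elements $\omega^{\pm 1}$ of order $3$, and the two generators $-\omega^{\pm 1}$ of order $6$. For each $t$ the cyclic homology will be extracted from Connes' SBI long exact sequence applied to the twisted Hochschild homology, exactly as in the previous sections.

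For the generators $t = -\omega^{\pm 1}$ the preceding theorem already gives $HC_{even} \cong \mathbb C$ and $HC_{odd} = 0$ for each, contributing $\mathbb C^2$ to the even part. For $t = \omega^{\pm 1}$, the $\mathbb Z_3$-section yielded $(HC_{even}(\mathcal A_{\theta,\omega^{\pm 1}}^{alg}))^{\mathbb Z_3} \cong \mathbb C^3$; I would then restrict further to the full $\mathbb Z_6$-invariants, and the action of the extra factor $-1 \in \mathbb Z_6/\mathbb Z_3$ on the explicit generators $\bar{a_{0,0}},\bar{a_{1,0}},\bar{a_{0,1}}$ computed in the preceding Hochschild lemma identifies $\bar{a_{1,0}}$ with $\bar{a_{0,1}}$, cutting the space down to $\mathbb C^2$ for each of $\omega,\omega^2$. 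Similarly, for $t=-1$ the $\mathbb Z_2$-section yielded $\mathbb C^4$, and the further $\mathbb Z_3$-action (by $-\omega$) acting on the generators $\bar{a_{0,0}},\bar{a_{1,0}},\bar{a_{0,1}},\bar{a_{1,1}}$ as computed in the $\mathbb Z_6$ Hochschild theorem forces the relations $b=c=d/\sqrt{\lambda}$, leaving a $2$-dimensional invariant subspace. Finally for $t=1$ the SBI sequence together with $HH_0(\mathcal A_\theta^{alg})^{\mathbb Z_6} \cong \mathbb C$ and $HH_2(\mathcal A_\theta^{alg})^{\mathbb Z_6} \cong \mathbb C$ from Section 5 gives $(HC_{even}(\mathcal A_\theta^{alg}))^{\mathbb Z_6} \cong \mathbb C^2$. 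Summing $1+1+2+2+2+2 = 10$ yields the stated dimension for the even part.

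For the odd part, every $HH_1$ encountered in this paper, twisted or untwisted and already restricted to the relevant invariants, vanishes. Propagating this vanishing through the SBI long exact sequence shows that $(HC_{odd}(\mathcal A_{\theta,t}^{alg}))^{\mathbb Z_6} = 0$ for every $t \in \mathbb Z_6$, whence $HC_{odd}(\mathcal A_\theta^{alg} \rtimes \mathbb Z_6) = 0$. The only genuine obstacle is bookkeeping: one must correctly identify how the residual action of $\mathbb Z_6/\mathbb Z_3$ (resp.\ $\mathbb Z_6/\mathbb Z_2$) acts on the invariants already computed for the smaller subgroups, and check that this action indeed collapses $\mathbb C^3 \to \mathbb C^2$ and $\mathbb C^4 \to \mathbb C^2$. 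This reduces to the explicit calculation of the $-\omega$-action on the known generators of the $HH_0$ groups, which was already carried out in the Hochschild theorem of the current section, so nothing new is required beyond reusing those tables.
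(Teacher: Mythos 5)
Your proposal is correct and follows essentially the same route as the paper: the elementwise spectral decomposition, the SBI sequence on each twisted component, and the explicit $-\omega$-action on the $HH_0$ generators to cut the $\mathbb Z_3$- and $\mathbb Z_2$-invariant spaces $\mathbb C^3$ and $\mathbb C^4$ down to $\mathbb C^2$ each (the paper computes the $-\omega$-action directly rather than phrasing it as a residual $\mathbb Z_6/\mathbb Z_3$ or $\mathbb Z_6/\mathbb Z_2$ action, but this is the same calculation). Your bookkeeping $1+1+2+2+2+2=10$ and the vanishing of the odd part via $HH_1=0$ in all components match the paper's argument exactly.
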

\begin{proof}
We have the $S,B,I$ sequence relating the Hochschild and the cyclic homology.
\begin{center}
$... \xrightarrow{B} (HH_2(\mathcal A^{alg}_{\theta, -\omega}))^{\mathbb Z_6} \xrightarrow{I} (HC_2(\mathcal A^{alg}_{\theta,-\omega}))^{\mathbb Z_6} \xrightarrow{S} (HC_{0}(\mathcal A^{alg}_{\theta,-\omega}))^{\mathbb Z_6} \xrightarrow{B} (HH_{1}(\mathcal A^{alg}_{\theta,-\omega}))^{\mathbb Z_6} \xrightarrow{I}...$
\end{center}
Hence, $(HC_2(\mathcal A_\theta^{alg}))^{\mathbb Z_6} \cong \mathbb C^2$. Also we notice here that in this case, the $\mathbb Z_6$ invariant sub-space of $HC_{even}(\mathcal A_{\theta, -1}^{alg})$ is $2$ dimensional and the $\mathbb Z_6$ invariant sub-space of $HC_{even}(\mathcal A_{\theta, \pm\omega}^{alg})$ is $2$ dimensional. Hence we conclude that 
\begin{center}
$HC_{even}(\mathcal A_\theta^{alg} \rtimes \mathbb Z_6) \cong \mathbb C^{10}$.
\end{center} \par
As for the odd cyclic homology, we have $(HC_3(\mathcal A^{alg}_{\theta,\pm \omega}))^{\mathbb Z_6}\cong (HC_1(\mathcal A^{alg}_{\theta,\pm \omega}))^{\mathbb Z_6}=0$, and we also have $HC_3(\mathcal A_\theta^{alg})^{\mathbb Z_6} = HC_1(\mathcal A_\theta^{alg})^{\mathbb Z_6}=HH_1(\mathcal A_\theta^{alg})^{\mathbb Z_6} = 0$. Combining these two results, we obtain that 
\begin{center}
$HC_{odd}(\mathcal A_\theta^{alg} \rtimes \mathbb Z_6) = 0$.
\end{center}
\par So we have computed the Hochschild and cyclic homology of the $\mathbb Z_6$ orbifold.
\end{proof}

\section{Hochschild homology of smooth $\mathbb Z_2$ orbifold, $\mathcal A_\theta \rtimes \mathbb Z_2$}
In this section we give partial results regarding the Hochschild homology of the smooth non-commutative $\mathbb Z_2$ toroidal orbifold, $\mathcal A_\theta \rtimes \mathbb Z_2$. We also present a  lemma which characterises a class of elements of the group whose dimension is an open problem as I write this article. \par
I also have strong conviction that the method we used to calculate the (co)homology dimensions for the algebraic non-commutative orbifold will be useful and instrumental in computing the $HH_1(\mathcal A_\theta \rtimes \mathbb Z_2)$ whose dimension remains uncalculated. \par

\begin{thm}: For $\theta \notin \mathbb Q$, we have $HH_2(\mathcal A_\theta \rtimes \mathbb Z_2) \cong \mathbb C.$ 
\end{thm}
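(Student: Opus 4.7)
The plan is to carry out the same two-step decomposition used for the algebraic case in Theorem 6.6, now in the smooth setting. Since $|\mathbb Z_2|$ is invertible in $\mathbb C$, the spectral decomposition of Theorem 4.1 extends to the smooth crossed product, yielding
\[
HH_2(\mathcal A_\theta \rtimes \mathbb Z_2) \;\cong\; HH_2(\mathcal A_\theta)^{\mathbb Z_2} \;\oplus\; HH_2(\mathcal A_\theta, {}_{-1}\mathcal A_\theta)^{\mathbb Z_2}.
\]
So it suffices to compute these two pieces individually.

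For the untwisted piece, Connes' classical result gives $HH_2(\mathcal A_\theta) \cong \mathbb C$, generated by a cycle supported on the single Fourier mode $U_1^{-1}U_2^{-1}$. I would then repeat the push-forward/pull-back calculation of Subsection 5.1 verbatim---it uses only the Fourier expansion of a single element and the Koszul-to-bar chain maps $h_2, k_2$, both of which are insensitive to whether we work algebraically or smoothly---to conclude that this generator is $\mathbb Z_2$-invariant. Hence $HH_2(\mathcal A_\theta)^{\mathbb Z_2} \cong \mathbb C$.

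For the twisted piece, I would apply Connes' smooth Koszul resolution with coefficients in the twisted bimodule ${}_{-1}\mathcal A_\theta$, exactly as in Lemma 6.3. Writing $a = \sum a_{n,m} U_1^n U_2^m$ now with Schwartz-class coefficients, the condition $(1 \otimes b_2)(a \otimes I) = 0$ becomes the pair of relations
\[
a_{n,m} \;=\; \lambda^{m-1}\, a_{n-2,m}, \qquad a_{n-1,m} \;=\; \lambda^{n}\, a_{n-1,m-2}.
\]
Iterating shows that $|a_{n,m}|$ is constant along each parity progression in either variable. In the algebraic case this forces $a = 0$ by finite support; in the smooth case, the rapid (Schwartz) decay of Fourier coefficients of elements of $\mathcal A_\theta$ forces exactly the same conclusion. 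Thus $HH_2(\mathcal A_\theta, {}_{-1}\mathcal A_\theta) = 0$, and in particular its $\mathbb Z_2$-invariants vanish.

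The main technical point is checking that Connes' resolution and Lemma 4.1 transfer cleanly to the twisted smooth bimodule ${}_{-1}\mathcal A_\theta$---that is, that the differentials remain continuous and the contracting homotopy constructed in the proof of Lemma 4.1 still converges in the Fr\'echet topology of $\mathcal A_\theta$. This is routine, since the twisting is by a Fr\'echet automorphism and the relevant summations act diagonally in each Fourier mode. The obstruction that the paper notes for $HH_1$ (whose dimension remains open) does not appear here, because the top-degree kernel equations are stringent enough to contradict rapid decay outright. Combining the two pieces gives $HH_2(\mathcal A_\theta \rtimes \mathbb Z_2) \cong \mathbb C$.
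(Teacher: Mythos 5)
Your proposal matches the paper's own argument essentially step for step: the same $\mathbb Z_2$-spectral decomposition into the untwisted part $H_2(\mathcal A_\theta,\mathcal A_\theta)^{\mathbb Z_2}\cong\mathbb C$ (generated by the class of $U_1^{-1}U_2^{-1}$, checked invariant via the chain maps $h_2,k_2$) and the twisted part $H_2(\mathcal A_\theta,{}_{-1}\mathcal A_\theta)$, which vanishes because the kernel relations force $|a_{n,m}|$ to be constant along parity progressions, contradicting Schwartz decay. This is correct and is the same proof as in the paper; your explicit remark that the Koszul resolution and the invariants lemma must be verified to carry over continuously to the smooth twisted bimodule is a point the paper leaves implicit.
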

\begin{proof}
We consider the map $(1 \otimes b_2)$ in the tensored complex. 
\begin{center}
$ 0 \xleftarrow{} {}_{-1}\mathcal A _{\theta} \xleftarrow{b_1} {}_{-1}\mathcal A _{\theta} \oplus {}_{-1}\mathcal A _{\theta} \xleftarrow{b_2} {}_{-1}\mathcal A _{\theta}$\\
\end{center}To calculate the kernel of this map we have a closer look at this map,
\begin{center}
$(1\otimes b_2)(a\otimes I)=a\otimes_{\mathfrak B_\theta}(U_2\otimes I-\lambda\otimes U_2)\otimes e_1-a\otimes_{\mathfrak B_\theta}(\lambda U_1\otimes I-I\otimes U_1)\otimes e_2$.
\end{center}
Using the twisted bimodule structure of ${}_{-1}\mathcal A_\theta$ over $\mathcal A_\theta$, we can simplify the equation to the following,
\begin{center}
$(1\otimes b_2)(a\otimes I)= (\lambda  U_2^{-1}a-aU_2,\lambda a U_1-U_1^{-1} a)$.
\end{center}
Hence we obtain the following relation for an element $(a \otimes 1)$ to reside in $ker( 1\otimes b_2)$. 
\begin{center}
$H_2(\mathcal A_\theta,{}_{-1}\mathcal A_\theta)= \left\lbrace a\in {}_{-1}\mathcal A_\theta | a_{n,m}={\lambda}^{m-1}a_{n-2,m}; a_{n-1,m}=\lambda^n a_{n-1,m-2}\right\rbrace$ .
\end{center}
If an element $\varphi \in {}_{-1}\mathcal A_\theta$ were to satisfy these relations. Then for a fixed $n_0 \in \mathbb Z$ the sequence $(\varphi_{n_0,2m})_m \notin \mathcal S(\mathbb Z)$. This is a contradiction. Hence there are no such nontrivial elements in $\mathcal A_\theta$. Hence we get that:
\begin{center}
$H_2(\mathcal A_\theta,{}_{-1}\mathcal A_\theta)=0$.
\end{center}
\par Now we consider the homology for $g=1$ part in the paracyclic decomposition. We notice through computations similar to algebraic non-commutative orbifold, that, for  $\theta \notin \mathbb Q$, $H_2(\mathcal A_\theta, \mathcal A_\theta) \cong \mathbb C$. Here the generator of the homology group $H_2(\mathcal A_\theta, \mathcal A_\theta)$ is the element $a_{-1,-1}U_1^{-1}U_2^{-1} \in \mathcal A_\theta$, which we see is invariant under the $\mathbb Z_2$ in a similar way as we have demonstrated in earlier calculations pertaining to the algebraic noncommutative orbifold. Hence using the paracyclic decomposition, we have 
\begin{center}$HH_2(\mathcal A_\theta \rtimes \mathbb Z_2) = H_2(\mathcal A_\theta, \mathcal A_\theta)^{\mathbb Z_2} \oplus H_2(\mathcal A_\theta, {}_{-1}\mathcal A_\theta)^{\mathbb Z_2}.$
\end{center}
Using the above formula we conclude that
\begin{center}
$HH_2(\mathcal A_\theta \rtimes \mathbb Z_2) \cong \mathbb C$.
\end{center}
\end{proof}

\begin{thm} For $\theta \notin \mathbb Q$, satisfying the Diophantine condition and $\G \subset SL(2,\mathbb Z)$ a finite group. We have,
\begin{center}
$H_0(\mathcal A_\theta, \mathcal A_\theta)^{\G} \cong \mathbb C$
\end{center}
\end{thm}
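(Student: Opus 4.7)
The plan is to reduce this to the classical computation of $H_0(\mathcal A_\theta, \mathcal A_\theta)$ due to Connes and then observe that the action of $\G$ is automatically trivial on the one-dimensional answer.

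First I would recall that, for $\theta \notin \mathbb Q$ satisfying the Diophantine condition, Connes showed in his original paper that $H_0(\mathcal A_\theta, \mathcal A_\theta) = \mathcal A_\theta / \overline{[\mathcal A_\theta, \mathcal A_\theta]} \cong \mathbb C$, with the isomorphism implemented by the canonical trace $\tau$ sending $\sum a_{n,m} U_1^n U_2^m$ to $a_{0,0}$. The Diophantine condition on $\theta$ is essential here: it ensures that the commutator subspace is closed in the Fréchet topology of $\mathcal A_\theta$, so that the quotient is Hausdorff and reduces to the one-dimensional image of $\tau$. I would explicitly invoke this as the starting point, citing [C].

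Next I would use that any $g \in \G \subset SL(2,\mathbb Z)$ acts on $\mathcal A_\theta$ by a unital algebra automorphism $\rho_g$. Consequently $\rho_g$ sends commutators to commutators, descends to the quotient $H_0(\mathcal A_\theta,\mathcal A_\theta)$, and fixes the class $[1]$ since $\rho_g(1)=1$. Since $H_0(\mathcal A_\theta,\mathcal A_\theta)$ is one-dimensional and spanned by $[1]$, the action of $\G$ on this space is the trivial representation. Taking $\G$-invariants therefore gives
\begin{center}
$H_0(\mathcal A_\theta, \mathcal A_\theta)^{\G} = H_0(\mathcal A_\theta, \mathcal A_\theta) \cong \mathbb C.$
\end{center}

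The only potentially subtle step is the first one, i.e.\ justifying that the Diophantine condition indeed collapses $H_0$ to $\mathbb C$ in the smooth (as opposed to algebraic) setting; once that is in hand, the invariance argument is essentially formal, because any unit-preserving automorphism must fix the class of $1$, and a one-dimensional representation of a finite group containing a nonzero invariant vector is the trivial representation. I would mention in passing that the analogous statement fails in the purely algebraic case considered earlier (where $H_0(\mathcal A_\theta^{alg},\mathcal A_\theta^{alg})$ can be larger), which is precisely why the Diophantine hypothesis appears in the statement of this theorem.
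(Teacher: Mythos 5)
Your proposal is correct and follows essentially the same route as the paper: both reduce to Connes' result that $H_0(\mathcal A_\theta,\mathcal A_\theta)\cong\mathbb C$ under the Diophantine condition and then observe that the generator (the class of the constant term, i.e.\ of $1$) is fixed by the $\G$-action. The paper phrases the invariance step via the degree-zero comparison maps $h_0=k_0=\mathrm{id}$ between the Koszul and bar complexes, whereas you argue directly from unitality of the automorphisms; this is only a presentational difference, and your version is if anything cleaner.
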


\begin{proof}
As we know from [C] that for $\theta \notin \mathbb Q$ satisfying the Diophantine condition, 
\begin{center}
$H_0(\mathcal A_\theta, \mathcal A_\theta) \cong \mathbb C$.
\end{center}
This group is generated by $\varphi_{0,0}$. To compute the invariance we need to deploy the method we used in sections before. We need to push the cycle to the bar complex, $C_\ast(\mathcal A_\theta)$ and then consider the natural action that exists on the bar complex. \newline
Using  the map $h_0: J_0(\mathcal A_\theta) \to C_0(\mathcal A_\theta)$, and the map $k_0: C_(\mathcal A_\theta) \to J_0(\mathcal A_\theta)$, we notice that $h_0=k_0=id$. Hence we observe that the $\mathbb Z_2$ action on the bar complex is induced on to the Kozul complex without any alteration. Hence in the Kozul complex $-1 \cdot \bar{a_{0,0}}=\bar{a_{0,0}}$. Hence we conclude that 
\begin{center}
$H_0(\mathcal A_\theta, \mathcal A_\theta)^{\mathbb Z_2} \cong \mathbb C$
\end{center}
For $\G = \mathbb Z_3, \mathbb Z_4, \text{and } \mathbb Z_6$; we have the same calculation process and hence we conclude that
\begin{center}
$H_0(\mathcal A_\theta, \mathcal A_\theta)^{\G} \cong \mathbb C$
\end{center}

\end{proof}
\begin{thm}
For $\theta \notin \mathbb Q$, satisfying the Diophantine condition and $\G \subset SL(2,\mathbb Z)$ a finite group. We have 
\begin{center}
$H_1(\mathcal A_\theta, \mathcal A_\theta)^{\G} =0$.
\end{center}
\end{thm}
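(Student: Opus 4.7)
The plan is to reduce the statement to the push-pull analysis already carried out for the algebraic case in Section 6.2. By Connes' classical computation, when $\theta \notin \mathbb Q$ satisfies the Diophantine condition, the smooth non-commutative torus has $HH_1(\mathcal A_\theta) \cong \mathbb C^2$, with representatives that can be taken to be exactly the Koszul classes $\overline{U_1^{-1} \otimes e_1}$ and $\overline{U_2^{-1} \otimes e_2}$, the same generators as in the algebraic case. Hence the computation of $HH_1(\mathcal A_\theta)^{\G}$ reduces to determining the induced $\G$-action on this two-dimensional space and checking that it has no invariant vectors.

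To extract this action I would apply the same procedure used for $\mathbb Z_2$ in Section 6.2: push each generator into the bar complex via $h_1$, apply the chosen generator of $\G$ using its description on $U_1, U_2$, then pull back through $k_1$ to the Koszul complex and express the result in the basis $\overline{U_1^{-1} \otimes e_1},\ \overline{U_2^{-1} \otimes e_2}$. Because both the representatives and their images under any $g \in \G$ are finitely supported Laurent polynomials in $U_1, U_2$, the formulas for $h_1$ and $k_1$ apply verbatim in the smooth setting with no convergence issue. The $\mathbb Z_2$ case already performed in Section 6.2 gives $g_* = -\mathrm{id}$; the analogous finite calculations for the generators of $\mathbb Z_3, \mathbb Z_4, \mathbb Z_6$ produce $2 \times 2$ matrices whose eigenvalues are the respective primitive roots of unity.

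A conceptual shortcut to the same conclusion is that the representation of $\G$ on $HH_1(\mathcal A_\theta) \cong \mathbb C^2$ obtained this way coincides with the defining embedding $\G \hookrightarrow SL(2,\mathbb Z) \subset GL(2,\mathbb C)$, since $HH_1$ is precisely the module of $1$-forms on the noncommutative torus and $\G$ acts on it through its Jacobian action on generators. Every finite-order element of $SL(2,\mathbb Z) \setminus \{I\}$ has trace in $\{-2,-1,0,1\}$ and therefore no eigenvalue equal to $1$, so no such subgroup admits a non-zero fixed vector on $\mathbb C^2$. This forces $HH_1(\mathcal A_\theta)^{\G} = 0$ for all finite $\G \subset SL(2,\mathbb Z)$.

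The main obstacle is the explicit verification that the push-pull machinery in fact realises the standard matrix representation; this amounts to the bookkeeping step using the $h_1, k_1$ formulas from Section 6.2, carried out once for each of the four generators. Since all intermediate tensors remain finitely supported and the formulas are polynomial in $U_1, U_2$, no analytical difficulty enters beyond what was already handled in the $\mathbb Z_2$ computation, so the argument goes through uniformly for every finite $\G \subset SL(2,\mathbb Z)$.
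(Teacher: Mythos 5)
Your proposal follows essentially the same route as the paper: reduce to Connes' computation $HH_1(\mathcal A_\theta)\cong\mathbb C^2$ under the Diophantine condition, then push the generators $U_1^{-1}\otimes e_1$, $U_2^{-1}\otimes e_2$ into the bar complex via $h_1$, act by the group, and pull back via $k_1$, obtaining $-\mathrm{id}$ for $\mathbb Z_2$ and no invariants in general. Your added observation that the induced action is (a form of) the defining two-dimensional representation of $\G\subset SL(2,\mathbb Z)$, whose non-identity finite-order elements have trace in $\{-2,-1,0,1\}$ and hence no eigenvalue $1$, is a clean way to dispatch the $\mathbb Z_3,\mathbb Z_4,\mathbb Z_6$ cases that the paper leaves as ``similar computations.''
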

\begin{proof}
Connes proved [C] that for $\theta \notin \mathbb Q$ satisfying the Diophantine condition, 
\begin{center}
$H_1(\mathcal A_\theta, \mathcal A_\theta) \cong \mathbb C^2$.
\end{center}
This group is generated by $\varphi^1_{-1,0}$ and $\varphi^2_{0,-1}$. To compute the invariant sub-space we need to deploy the method we used in previous sections. We need to push the cycle to the bar complex, $C_\ast(\mathcal A_\theta)$ using  the map $h_1: J_1(\mathcal A_\theta) \to C_1(\mathcal A_\theta)$, 
\begin{center}
$h_1(I \otimes e_i) =I \otimes U_j$
\end{center}
and then consider the natural action that exists on the bar complex. Thereafter compare the pull back of the twisted element with the original cycle.\newline


We can represent a general cycle $\varphi \in H_1(\mathcal A_\theta, {}_{-1}\mathcal A_\theta)$ in terms of generators $\bar{\varphi^1_{-1,0}}$ and $\bar{\varphi^2_{0,-1}}$ as follows,
\begin{center}
$\varphi = aU_1^{-1}\otimes e_1 + b U_2^{-1} \otimes e_2$
\end{center}
The twisted pull back of $\varphi$ is the cycle
\begin{center}
$(1 \otimes k_1)(-1 \cdot(1 \otimes h_1)(\varphi)) = (1 \otimes k_1)(-1 \cdot (a U_1^{-1} \otimes U_1 + b U_2 \otimes U_2^{-1}))=(1 \otimes k_1) (a U_1\otimes U_1^{-1} + b U_2 \otimes U_2^{-1})=a U_1 A \otimes e_1 + a U_1 B \otimes e_2 + b U_2 A' \otimes e_1 + b U_2 B' \otimes e_2=(a U_1 A + b U_2 A') \otimes e_1 + (a U_1 B + b U_2 B') \otimes e_2$.
\end{center}
With further simplification using the fact that $\mathcal A_\theta$ is a $\mathfrak B_\theta$ module, we have $U_1 A = -U_1^{-1}$ and $U_2 B'=-U_2^{-1}$. We also notice that $A' =B=0$, hence we have 
\begin{center}
$(1 \otimes k_1)(-1 \cdot(1 \otimes h_1)(aU_1^{-1}\otimes e_1 + b U_2^{-1} \otimes e_2)) =-(aU_1^{-1}\otimes e_1 + b U_2^{-1} \otimes e_2)$.
\end{center}
Whence,
\begin{center}
$H_1(\mathcal A_\theta , \mathcal A_\theta)^{\mathbb Z_2} = 0$.
\end{center}
We can repeat the above proof for $\G =\mathbb Z_3, \mathbb Z_4, \text{and } \mathbb Z_6$, to deduce that
\begin{center}
$H_1(\mathcal A_\theta , \mathcal A_\theta)^{\G} = 0$.
\end{center}
\end{proof}
\begin{conjecture}
For $\theta \notin \mathbb Q$, we conjecture that 
\begin{center}
$HC_{even}(\mathcal A_\theta \rtimes \mathbb Z_2) \cong \mathbb C^6, HC_{odd}(\mathcal A_\theta \rtimes \mathbb Z_2) =0$.
\end{center}
\end{conjecture}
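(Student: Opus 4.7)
The plan is to replicate the strategy used in the algebraic case (Section 7) in the smooth context, and then feed the resulting Hochschild data into the Connes SBI long exact sequence. By the paracyclic decomposition of Theorem 4.1 applied to $\mathcal A_\theta$, one has $HH_\bullet(\mathcal A_\theta \rtimes \mathbb Z_2) \cong HH_\bullet(\mathcal A_\theta)^{\mathbb Z_2} \oplus HH_\bullet(\mathcal A_\theta, {}_{-1}\mathcal A_\theta)^{\mathbb Z_2}$, and the same splitting is preserved by the operators $S,B,I$. Thus it suffices to determine each summand of the Hochschild groups and then apply SBI factor-by-factor.

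For the $g=1$ component, Connes' Koszul resolution together with Theorems 10.2--10.4 already gives $HH_0(\mathcal A_\theta)^{\mathbb Z_2} \cong \mathbb C$, $HH_1(\mathcal A_\theta)^{\mathbb Z_2} = 0$, and $HH_2(\mathcal A_\theta)^{\mathbb Z_2} \cong \mathbb C$ under the Diophantine hypothesis. For the twisted $g=-1$ component, Theorem 11.1 already supplies $H_2(\mathcal A_\theta, {}_{-1}\mathcal A_\theta) = 0$. I would next verify that $H_0(\mathcal A_\theta, {}_{-1}\mathcal A_\theta)^{\mathbb Z_2} \cong \mathbb C^4$ by repeating Lemma 7.4: the image relations $b_1(U_1^nU_2^m,0) = U_1^{n-1}U_2^m(1-\lambda^{-m}U_1^2)$ and $b_1(0,U_1^nU_2^{m-1}) = (\lambda^{-n}-U_2^2)U_1^nU_2^{m-2}$ continue to make sense for Schwartz-class coefficients and, under Diophantine small-divisor control, their closed linear span leaves exactly the four residues $\bar{a}_{0,0}, \bar{a}_{1,0}, \bar{a}_{0,1}, \bar{a}_{1,1}$; invariance under $-1$ is automatic since $k_0=h_0=\mathrm{id}$. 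Combined with the $g=1$ data, this yields $HH_0(\mathcal A_\theta \rtimes \mathbb Z_2) \cong \mathbb C^5$, and Theorem 11.1 already gives $HH_2(\mathcal A_\theta \rtimes \mathbb Z_2) \cong \mathbb C$.

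The conjecture then reduces to the vanishing $HH_1(\mathcal A_\theta, {}_{-1}\mathcal A_\theta) = 0$; once this is in hand, the SBI argument of Section 7.2 propagates as for the algebraic orbifold, giving $(HC_2)^{\mathbb Z_2} \cong (HC_0)^{\mathbb Z_2} \cong \mathbb C^4$ on the twisted side and $HC_{even}(\mathcal A_\theta)^{\mathbb Z_2} \cong \mathbb C^2$ on the untwisted side (by Connes' classical computation), totalling $\mathbb C^6$, while all odd groups collapse for dimension reasons. The main obstacle is precisely this vanishing. In the algebraic case, Section 7 solved the kernel equations by an induction on the cardinality of the support via the reduction operator $\wedge$, using that a leftmost-column argument on a compact rectangle eventually forces a contradiction. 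For Schwartz elements in ${}_{-1}\mathcal A_\theta$ the support is typically infinite, so the combinatorial induction must be replaced by an analytic construction.

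The analytic approach I would attempt is to fix the column index $n$ and read the two kernel relations $a^1_{n,m-1}-a^2_{n-1,m-2} = \lambda^{m-1}a^1_{n-2,m-1} - \lambda^{1-n}a^2_{n-1,m}$ as a coupled two-step linear recursion in $m$, whose characteristic roots are monomials in $\lambda$ of modulus one; inverting this recursion produces a formal primitive $\varphi \in {}_{-1}\mathcal A_\theta$ with $a^1 = \varphi U_2 - \lambda U_2 \varphi$ and $a^2 = U_1\varphi - \lambda \varphi U_1$. The hard part is verifying that $\varphi$ is Schwartz: the expression for $\varphi_{n,m}$ involves small divisors of the form $1-\lambda^{k}$, and only the Diophantine condition on $\theta$ gives the polynomial lower bounds needed to conclude rapid decay. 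This is the same mechanism that enters Connes' original proof and our Theorems 10.3--10.4, and it is the reason the conjecture as stated should almost certainly be strengthened to assume $\theta$ Diophantine, matching the hypotheses under which $HH_1(\mathcal A_\theta)^{\mathbb Z_2} = 0$ is established.
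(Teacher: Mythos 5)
The statement you are addressing is left as a conjecture in the paper precisely because the ingredients your plan requires are not established there, and your proposal does not establish them either; it is a program, not a proof. The architecture is right — the paracyclic splitting $HH_\bullet(\mathcal A_\theta \rtimes \mathbb Z_2) \cong HH_\bullet(\mathcal A_\theta)^{\mathbb Z_2} \oplus HH_\bullet(\mathcal A_\theta, {}_{-1}\mathcal A_\theta)^{\mathbb Z_2}$ followed by the $S,B,I$ sequence applied summand by summand is exactly how the algebraic case is handled in Sections 6--9, and the paper itself supplies the $g=1$ data and the vanishing of $H_2(\mathcal A_\theta,{}_{-1}\mathcal A_\theta)$. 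But the two steps you defer are the entire content of the conjecture. First, your claim that $H_0(\mathcal A_\theta,{}_{-1}\mathcal A_\theta)^{\mathbb Z_2}\cong\mathbb C^4$ "continues to make sense for Schwartz-class coefficients" is asserted, not proved: in the topological category $H_0$ is a quotient by the image of $b_1$, which need not be closed, so one must actually solve the division problem $a = U_1\varphi^1-\varphi^1U_1 + U_2\varphi^2-\varphi^2U_2$ (in the twisted module structure) with Schwartz control on $\varphi^1,\varphi^2$; this is a genuine small-divisor estimate, not a formal repetition of Lemma 7.4. Second, the vanishing of $H_1(\mathcal A_\theta,{}_{-1}\mathcal A_\theta)$, which you correctly identify as the main obstacle, is exactly the group the paper says remains uncalculated; your proposed two-step recursion in $m$ produces only a formal primitive, and no rapid-decay estimate is given. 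Until both are done, the SBI argument has no input on the twisted summand.

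There is also a scope problem you half-notice but do not resolve: the conjecture is stated for all $\theta\notin\mathbb Q$, whereas your entire route goes through finite-dimensionality of the Hochschild groups, which by Connes' work fails for non-Diophantine $\theta$ (the groups $HH_1(\mathcal A_\theta)$ and $HH_2(\mathcal A_\theta)$ are then infinite-dimensional and non-Hausdorff). So even if the Diophantine case were completed along your lines, the statement as written would remain open; one would either have to restrict the hypothesis, as you suggest, or replace the Hochschild-to-cyclic bootstrap by an argument that sees periodic cyclic homology directly (e.g.\ via its homotopy-invariance properties), which is a genuinely different method from anything in the paper.
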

This conjecture when true would mean that the dimension of the $K_0(\mathcal A_\theta \rtimes \mathbb Z_2)$ and $HC_{even}(\mathcal A_\theta \rtimes \mathbb Z_2)$ are same, which is very interesting. Using this we can try for a possible postulation of the Poincare duality.

.



\begin{thebibliography}{}

\bibitem[AFLS]{AFLS}J. Alev, M. A. Farinati, T. Lambre and A. L. Solotar:
Homology of invariants of a Weyl algebra under a finite group action. (Homologie des invariants d'une algèbre de Weyl sous l'action d'un groupe fini, \emph{J. Algebra 232, (2000), pp. 564-577.}

\bibitem[AL]{AL} J. Alev and T. Lambre: Homologie des invariants d'une algebre de Weyl, \emph{K-Theory,18, (1999), pp. 401-411}.

\bibitem[AS]{AS} M. Atiyah and I. Singer: The Index of Elliptic Operators, \emph{Annals of Mathematics, Vol. 87, No. 3, (1968) pp. 484-530.} 

\bibitem[B]{B} J. Baudry: Invariants du tore quantique. (French. French summary) [Invariants of the quantum torus], \emph{Bull. Sci. Math., 134(2010), pp. 531-547.}
\bibitem[BRT]{BRT} Y. Berest, A. Ramadoss, X.Tang: The Picard group of  a noncommutative algebraic quantum torus,  \emph{J. of Noncommutative Geometry, to appear}.

\bibitem[C]{C} A. Connes: Noncommutative differential geometry, \emph{Publ.
Math. IHES 62, 257-360 (1985)}.

\bibitem[ELPH]{ELPH} S.Echterhoff, W.Luck, N.Phillips, S.Waters: The structure of
crossed products of irrational rotation algebras by finite subgroups of $
SL(2,\mathbb{Z})$ ,\emph{Journal fr die reine und angewandte Mathematik, Volume 639, 173-221 (2010)}.

\bibitem[GJ]{GJ} E. Getzler and D. John: The cyclic homology of crossed product
algebras, \emph{Journal fr die reine und angewandte Mathematik, Volume 445, 161-174 (1993)}.

\bibitem[LJ]{L} J. Loday: Cyclic homology(ISBN 3540630740) \emph{Springer, Second Edition}.

\bibitem[LM]{LM} M. Lorenz, On the homology of graded algebras,
\emph{Com. Alg.,20-2, 1992, pp. 489-507.}

\bibitem[O]{O} A. Oblomkov: Double affine Hecke algebras of rank 1 and affine cubic surfaces, \emph{Int. Math. Res. Not., (2004), pp. 877-912.} 

\bibitem[OP]{OP} P.Etingof and A.Oblomkov: Quantization, orbifold cohomology, and Cherednik algebras. (English summary) Jack, Hall-Littlewood and Macdonald polynomials, \emph{Contemp. Math., 417, Amer. Math. Soc., Providence, RI, (2006), pp. 171-182.} 

\bibitem[OS]{OS} P. Olczykowsky and A. Sitarz: K-theory of noncommutative
Bieberbach manifolds, \emph{(arXiv:1205.0743) May 2012}.


\bibitem[PV]{PV} M.Penkava and P.Vanhaecke: Hochschild cohomology of polynomial
algebras, \emph{Communications in Contemporary Mathematics, Vol 3, Number 3,
365-393 (2001)}.

\bibitem[R1]{R1} M. Rieffel: C*-algebras associated with irrational rotations, 
\emph{Pacific Journal of Mathematics, Vol 93, No. 2 (1981), 415-429}.

\bibitem[R2]{R2} M. Rieffel: Nonstable K-theory and noncommutative tori,\emph{
Contemporary Mathematics, Vol 2 (1987), 267-279}.

\bibitem[R3]{R3} M. Rieffel: Non-Commutative Tori - A Case Study of Non-Commutative Differentiable Manifolds, \emph{ Contemporary Mathematics 105 (1990) 191-211.} 

\bibitem[TH]{TH} G. Halbout and X. Tang: Noncommutative Poisson structures on orbifolds, \emph{Trans. Amer. Math. Soc. 362, Number 5, 2249-2277 (2010)}.

\bibitem[TPPN]{TPPN} N. Neumaier, M.Pflaum, H. Posthuma, X. Tang: Homology of formal deformations of proper etale Lie groupoids. (English summary)
\emph{J. Reine Angew. Math. 593, 117-168 (2006)}.

\bibitem[TPPT]{TPPT} M. Pflaum, H. Posthuma, X. Tang, H. Tseng: Orbifold cup products and ring structures on Hochschild cohomology groups,
\emph{Commun. Contemp. Math. 13, 123-182 (2011)}.

\bibitem[VP]{VP} M. Pimsner and D. Voiculescu: Imbedding the irrational rotation
C*-algebra into an AF-algebra, \emph{Journal of Operator Theory, Volume 4,
201-210 (1980)}.
\end{thebibliography}
\end{document}